\numberwithin{equation}{section}
\newtheorem{theorem}{Theorem}[section]
\newtheorem{lemma}[theorem]{Lemma}
\newtheorem{thm}[theorem]{Theorem}
\newtheorem{rmk}[theorem]{Remark}
\newcommand{\Rmnum}[1]{\expandafter\@slowromancap\romannumeral #1@}
\begin{document}
\title{Homogenization and Convergence Rates for Periodic Parabolic Equations with Highly Oscillating Potentials}
\author{Yiping Zhang\thanks{Yanqi Lake Beijing Institute of Mathematical Sciences and Applications, Beijing 101408, China and  Yau Mathematical Sciences Center, Tsinghua University, Beijing 100084, China, (zhangyiping161@mails.ucas.ac.cn).}}
\date{}
\maketitle
\begin{abstract}
This paper considers a family of second-order periodic parabolic equations with highly
oscillating potentials, which have been considered many times for the time-varying potentials in stochastic homogenization.
Following a standard two-scale expansions illusion, we can guess and succeed in determining the homogenized equation in different cases that the potentials satisfy the corresponding assumptions, based on suitable uniform estimates of the $L^2(0,T;H^1(\Omega))$-norm for the solutions. To handle the more singular case and obtain the convergence rates in $L^\infty(0,T;L^2(\Omega))$, we need to estimate the Hessian term as well as the t-derivative term more exactly, which may be depend on $\varepsilon$. The difficulty is to find suitable uniform estimates for the $L^2(0,T;H^1(\Omega))$-norm and suitable estimates for the higher order derivative terms.

\end{abstract}

\section{Introduction}
Let $\Omega\subset\mathbb{R}^d$ be a bounded $C^{1,1}$ domain with $d\geq 2$ and $0<T<\infty$. For $0<\varepsilon<1$, we investigate the following parabolic equations in homogenization theory:
\begin{equation}\left\{\begin{aligned}
\partial_t u_\varepsilon-\Delta u_\varepsilon -\varepsilon^{-\gamma}W\left(\frac{x}{\varepsilon^{k_1}},\frac t {\varepsilon^{k_2}}\right)u_\varepsilon&=f, \quad\quad\text{in }\Omega\times (0,T),\\
u_\varepsilon&=0, \quad\quad\text{on }\partial\Omega\times (0,T),\\
u_\varepsilon&=g, \quad\quad\text{on }\Omega\times \{t=0\},
\end{aligned}\right.\end{equation}
where $W(y,\tau)\in L^\infty(\mathbb{T}^{d+1})$  is 1-periodic in $(y,\tau)$ with $\int_{\mathbb{T}^{d+1}}W(y,\tau)dyd\tau=0$, $\mathbb{T}\cong \mathbb{R}/\mathbb{Z}$ denoting the unit torus and $\gamma$ being a positive constant depending only on $k_1$ and $k_2$. Under suitable conditions on $f$ and $g$, and according to the classical parabolic theory, it is known that, without the periodicity condition on $W$, there exists a unique weak solution $u_\varepsilon$ in $L^2(0,T;H_0^1(\Omega))$ to the equation $(1.1)$ with the $L^2(0,T;H_0^1(\Omega))$-norm depending on $\varepsilon$. Then the first difficulty is to derive uniform estimates of the $L^2(0,T;H_0^1(\Omega))$-norm for the solution $u_\varepsilon$ given by the classical parabolic theory. Moreover, in order to guarantee the strong convergence in $L^2(\Omega\times(0,T))$, we need also to obtain the uniform estimates of $||\partial_t u_\varepsilon||_{L^2(0,T;H^{-1}(\Omega))}$.

Our goal is to determine the limiting equation and obtain the  convergence rates, as $\varepsilon\rightarrow 0$ with suitable potentials $\varepsilon^{-\gamma}W\left({x}/{\varepsilon^{k_1}}, t / {\varepsilon^{k_2}}\right)$, where a suitable $\gamma$ is the correct value for $\gamma$ such that the limiting effect of the highly oscillating term is nontrivial. The homogenization results (without the results of convergence rates) have been widely considered in stochastic homogenization.

For the one-dimensional case with $\Omega=\mathbb{R}$, $f=0$, $g\in L^2(\mathbb{R})\cap C_b(\mathbb{R})$, and the potential W being a stationary random field defined on a probability space with zero-expectation, Pardoux and Piatnitski \cite{MR2962093} have obtained the homogenized equation with nontrivial potentials. In this case, the solution $u_\varepsilon$ has an explicit expression and is given by the Feynman-Kac formula:
$$\begin{aligned}u_\varepsilon(t,x)=\mathbb{E}\left[g(x+B_t)\exp\left(\varepsilon^{-\gamma}\int_0^tW
\left(\frac{s}{\varepsilon^{k_2}},\frac{x+B_s}{\varepsilon^{k_1}}\right)ds\right)\right].
\end{aligned}$$
 Precisely, they have considered three cases: Case (1), $k_2=0$, $k_1>0$, $\gamma=k_1/2$; Case (2), $k_1=0$, $k_2>0$, $\gamma=k_2/2$; Case (3), $0<2k_1\leq k_2$, $\gamma=\max(k_2/4+k_1/2,k_2/2)$. They succeeded in determining the limit equation for these three cases under suitable assumptions satisfied by the potential $W$. See also the previous works \cite{MR2233186,MR2451056,MR2718269} considering this topic.

Later on, under suitable scaling of the
amplitude of the potential $W$, Hairer and Pardoux \cite{MR3327517} proved the convergence to a deterministic heat equation
with constant potential for the case $\gamma=\max(k_2/4+k_1/2,k_2/2)$ with $k_2>2 k_1$, thus completing the results previously obtained in Pardoux and Piatnitski \cite{MR2962093}. Precisely, the assumptions are that $W$ is stationary, centered, continuous and $C^1$ in the $x$-variable with $E(|W|^p+|\partial_x W|^p)<\infty$ for any $p>0$, and the maximal correlation function $\varrho$ defined in this paper satisfies $\varrho(R)\leq C(1+R)^{-q}$ for any $q>0$.

For the multidimensional case ($d\geq 3$),  Gu and   Bal \cite{MR3274693} have considered the equation $\partial_t u_\varepsilon-\Delta u_\varepsilon-i\varepsilon^{-\gamma}W(x/\varepsilon,t/\varepsilon^k)u_\varepsilon=0$, and they succeeded in determining the limit equation for $\gamma=\max(k/2,1)$ with $\infty>k>0$ under the assumption that $W$ satisfies an extra finite range of dependence. Moreover, then have also obtained the limit equation convergence to SPDE if $k=\infty$.

For the error estimates,  Gu and  Bal \cite{MR3449303} have considered the equation $\partial_t u_\varepsilon(x,t,\omega)-\Delta u_\varepsilon(x,t,\omega)-i\varepsilon^{-1}W(x/\varepsilon,\omega)u_\varepsilon(x,t,w)=0$. Under suitable conditions for the potential $W$, they can not only determine the effective equation, but also obtain the following error estimates for long-range-correlated Gaussian potentials:
\begin{equation*}
\mathbb{E}\left\{\left|u_{\varepsilon}(t, x)-u_{\mathrm{hom}}(t, x)\right|\right\} \leq C (1+t) \begin{cases}\sqrt{\varepsilon} & d=3, \\ \varepsilon \sqrt{|\log \varepsilon|} & d=4, \\ \varepsilon & d>4.\end{cases}
\end{equation*}

In fact, there are many other works considering this topic, such as \cite{MR3274693,MR3312591}. We refer to a survey \cite{MR3318383} on this topic and the reference therein for more related results for this topic on stochastic case.

Now, back to our periodic setting, we can indeed handle some more singular potential terms, such as the case $\gamma=k-1$ for $k_1=1$ and $3\geq k_2=k>2$ (note that $k-1>k/2$  for $k\in (2,3]$), under
suitable assumptions for the $W$. Moreover, we can also obtain the convergence rates provided by the illusion of two-scale expansions with time-varying potentials $W$. In fact, in order to determine the effective equation and obtain the convergence rates, there are two difficulties that we need to overcome. The
first difficulty, as pointed out early, is to derive a uniform $L^2(0,T;H_0^1(\Omega))$-norm for the solution
$u_\varepsilon$ and to bound $||\partial_t u_\varepsilon||_{L^2(0,T;H^{-1}(\Omega))}$, so as to guarantee a
weak convergence in $L^2(0,T;H_0^1(\Omega))$ and a strong convergence in $L^2(\Omega\times (0,T))$, which are
usually enough to determine the homogenized equation. However, the above uniform estimates are not enough to
bound the convergence rates, and we need the estimates of the higher order derivative terms of $u_\varepsilon$ to bound these
error terms. Then the second difficulty is to derive  a more exact estimate on $\nabla^2 u_\varepsilon$ and
$\partial_t u_\varepsilon$, which may be depend on $\varepsilon$, since at a first glance, we can easily have $||\partial_t u_\varepsilon||_{L^2(\Omega_T)}+||\nabla^2 u_\varepsilon||_{L^2(\Omega_T)}\leq C \varepsilon^{-\gamma}$, which is not enough to determine the limiting equation and bound these error terms.

For the theory of periodic homogenization,  there is a vast and rich mathematical literature over the last forty years. Most of these works are focused on qualitative results, such as proving the existence of a homogenized equation. However, until recently, nearly all of the quantitative theory, such as the convergence rates in $L^2$ and $H^1$, the $W^{1,p}$-estimates, the Lipschitz estimates, the asymptotic expansions of the Green functions and fundamental solutions, were confined in periodic homogenization. There are many good expositions on this topic, see for instance the books \cite{MR2839402,MR1329546,MR3838419}
for periodic case, see also the book \cite{MR3932093} for the stochastic case.

Moreover, for the periodic or almost periodic case on this topic, we refer to \cite[Chapter 2.4.3]{MR2839402} and \cite{MR430548} for a homogenization result under some special cases.\\

Next, we briefly outline the structure of this paper.

In the next section, we give a heuristic illusion via the  two-scale expansions. That some conditions will be necessary  so as to guarantee these processes, and we can handle some more singular cases will be all included in these processes, even if we only calculate some cases that are easy to perform.

In Section 3, we gather the information obtained in Section 2, and introduce the main results in this paper, including the homogenization results and the convergence rates.

In Sections 4-6, we give the proofs of our main results given in Section 3.\\

At the end of this section, we introduce some notations that will be used frequently.\\

$\Omega$ denotes a bounded $C^{1,1}$ domain in $\mathbb{R}^d$ with $d\geq 2$. $\Omega_T=:\Omega\times (0,T)\subset \mathbb{R}^{d+1}$ for some positive constant $0<T<\infty$.

$\mathbb{T}\cong \mathbb{R}/\mathbb{Z}$ denotes the unit torus. We say $h(y,\tau)$ is 1-periodic in $(y,\tau)$, which means that for any $Z=(z',z_{d+1})\in \mathbb{Z}^{d+1}$, then $h(y+z',\tau+z_{d+1})=h(y,\tau)$. And for simplicity, we just say $h(y,\tau)$ is 1-periodic. $h\in \mathcal{B}(\mathbb{T}^{d+1})$ means that $h(y,\tau)$ is 1-periodic in $(y,\tau)$ with the norm $||h||_{\mathcal{B}(\mathbb{T}^{d+1})}$.

 For a 1-periodic function $h(y,\tau)$, we denote
\begin{equation*}\mathcal{M}(h)=:\int_{\mathbb{T}^{d+1}}h(y,\tau)dyd\tau,\ \mathcal{M}_y(h)(\tau)=:\int_{\mathbb{T}^{d}}h(y,\tau)dy,\ \mathcal{M}_\tau(h)(y)=:\int_{\mathbb{T} }h(y,\tau)d\tau\end{equation*}
and
we denote $h^\varepsilon(x,t)=:h(x/\varepsilon^{k_1},t/\varepsilon^{k_2})$ if the content is understood. Note that, in the following sections, we, for simplicity, always assume that $k_1=1$ and $k_2=k$, then $h^\varepsilon(x,t)=:h(x/\varepsilon,t/\varepsilon^{k})$ for different choices of $k$. Moreover, if $h=h(x)$, then $h^\varepsilon(x)=:h(x/\varepsilon)$ and if $h=h(t)$, then $h^\varepsilon(t):=h(t/\varepsilon^k)$.

We use the following H\"{o}lder spaces: $$\begin{aligned}&[u]_{C^{\alpha, \alpha / 2}(Q)} =\sup _{\substack{(x, t),(y, s) \in Q \\
(x, t) \neq(y, s)}} \frac{|u(x, t)-u(y, s)|}{\left(|x-y|+|t-s|^{1 / 2}\right)^{\alpha}},||u||_{C^{\alpha, \alpha / 2}(Q)}=[u]_{C^{\alpha, \alpha / 2}(Q)}+||u||_{L^\infty(Q)},\\
&||u||_{C^{1+\alpha, (1+\alpha) / 2}(Q)}=||\nabla u||_{C^{\alpha, \alpha / 2}(Q)}+||u||_{C^{\alpha, \alpha / 2}(Q)},
\ 0<\alpha < 1.\end{aligned}$$

For simplicity, we use the following notations: $L^\infty_TL^2_x=:L^\infty(0,T;L^2(\Omega))$ and $L^2_TH^1_x=:L^2(0,T;H^1(\Omega))$.

Moreover, denote $v_\varepsilon=:e^{-Nt}u_\varepsilon$ for some positive constant $N$, varying in different cases but only depending on $W$ and $d$, then it is easy to check that $v_\varepsilon$ satisfies the following equation:
\begin{equation}\left\{\begin{aligned}
\partial_t v_\varepsilon-\Delta v_\varepsilon+\left(N-\varepsilon^{-\gamma}W^\varepsilon\right)v_\varepsilon&=\tilde{f}, \quad\quad\text{in }\Omega\times (0,T),\\
v_\varepsilon&=0, \quad\quad\text{on }\partial\Omega\times (0,T),\\
v_\varepsilon&=g, \quad\quad\text{on }\Omega\times \{t=0\},
\end{aligned}\right.\end{equation}
where $\tilde{f}=fe^{-Nt}$. Similarly, we denote $v_0=:e^{-Nt}u_0$, where $u_0$ is the solution to the homogenized equation.

\section{Heuristic Illusion: Two-Scale Expansions}
In this section, we pay attention to the heuristic illusion so as to guess the effective equation by using the two-scale expansions and to find the suitable assumptions satisfied by the potential $W$. Note that the correct value for $\gamma$ will be given in the next section.

 In fact, there are two basic principles hidden behind in these processes: the one is that the solution $u_0$ to the effective equation should not depend on $(y,\tau)$; the other is that we need to match and cancel out all of the singular terms.

For simplicity, we just perform some typical and easy cases.\\

\noindent \textbf{Case 1}: $k_2=4$, $k_1=1$, $\gamma=2$ and $\mathcal{M}_\tau(W)=0$ for every $y\in \mathbb{T}^d$.

By viewing $y=x/\varepsilon$, $\tau=t/\varepsilon^{4}$, we look for $u_\varepsilon$ in the form
\begin{equation}\begin{aligned}
u_\varepsilon=&u_0+\varepsilon u_1+\varepsilon^2u_2+\cdots,\quad u_j=u_j(x,t,y,\tau),\\
&u_j\ \text{being 1-periodic in }(y,\tau).\end{aligned}\end{equation}
Moreover, we may expect that $u_0\equiv u_0(x,t)$ is independent of $y$ and $\tau$. Substituting the expression $(2.1)$ into the equation $(1.1)$ and matching the order of $\varepsilon$ yield that
\begin{equation}\begin{aligned}
&\varepsilon^{-4}:\partial_\tau u_0=0;\\
&\varepsilon^{-3}:\partial_\tau u_1=0;\\
&\varepsilon^{-2}:\partial_\tau u_2-\Delta_yu_0-Wu_0=0;\\
&\varepsilon^{-1}:\partial_\tau u_3-\Delta_yu_1-\operatorname{div}_x\nabla_yu_0-\operatorname{div}_y\nabla_xu_0-Wu_1=0;\\
&\varepsilon^{0}:\partial_tu_0-\Delta_xu_0+\partial_\tau u_4-\Delta_yu_2-\operatorname{div}_x\nabla_yu_1-\operatorname{div}_y\nabla_xu_1-Wu_2=f.
\end{aligned}\end{equation}
From $(2.2)_1$ and $(2.2)_3$, we know that $u_0\equiv u_0(x,t,y)$ satisfies
$$\Delta_yu_0+\mathcal{M}_\tau(W)u_0=0.$$
In general, if $\mathcal{M}_\tau(W)$ is not a constant for every $y\in \mathbb{T}^d$ (otherwise, which must be 0 since $\mathcal{M}(W)=0$), then the equation above is not solvable for $y\in\mathbb{T}^d$. Now if $\mathcal{M}_\tau(W)=0$ for every $y\in \mathbb{T}^d$, we know $u_0\equiv u_0(x,t)$. According to $(2.2)_3$ again, we have
$$u_2=u_0\cdot\int_0^{\tau}W(y,s)ds+\tilde{u}_2(x,t,y).$$

From $(2.2)_2$, $(2.2)_4$ and $\mathcal{M}_\tau(W)=0$ for every $y\in \mathbb{T}^d$, we know that $u_1\equiv u_1(x,t)$. Moreover, from $(2.2)_5$, we can determine that $u_0$ satisfies the following effective equation:
$$\partial_tu_0-\Delta_xu_0-\bar{W}u_0=f,$$
with $$\begin{aligned}\bar{W}&=:\int_{\mathbb{T}^{d+1}}W(y,\tau)\int_0^{\tau}W(y,s)dsdyd\tau\\
&=\frac12\int_{\mathbb{T}^{d+1}}\partial_\tau \left(\int_0^{\tau}W(y,s)ds\right)^2dyd\tau\\
&=\frac12\int_{\mathbb{T}^{d}} \left(\int_0^{1}W(y,\tau)d\tau\right)^2dy=0.\end{aligned}$$

According to $\bar{W}=0$, we know that $\gamma=2$ is not the correct value in this case since the limiting effect of the highly oscillating term is trivial. And we need to enlarge the value of $\gamma$ in this case.\\

\noindent \textbf{Case 2}: $k_1=k_2=2$, $\gamma=\frac{3}{2}$ and $\mathcal{M} (W)=0$.

By viewing $y=x/\varepsilon^{2}$, $\tau=t/\varepsilon^{2}$, we look for $u_\varepsilon$ in the form
\begin{equation}\begin{aligned}
u_\varepsilon=&u_0+\varepsilon u_1+\varepsilon^2u_2+\cdots+\varepsilon^{\frac12}v_1+\varepsilon^{\frac32}v_2+\cdots,\\
&u_j=u_j(x,t,y,\tau),\ v_j=v_j(x,t,y,\tau),\\
&\ \quad u_j,v_j\ \text{being 1-periodic in }(y,\tau).\end{aligned}\end{equation}

Substituting the expression $(2.3)$ into the equation $(1.1)$ and matching the order of $\varepsilon$ yield that
\begin{equation}\begin{aligned}
&\varepsilon^{-4}:-\Delta_y u_0=0;\\
&\varepsilon^{-3}:-\Delta_y u_1=0;\\
&\varepsilon^{-2}:\partial_\tau u_0-\Delta_y u_2-\operatorname{div}_x\nabla_yu_0-\operatorname{div}_y\nabla_xu_0=0;\\
&\varepsilon^{-1}:\partial_\tau u_1-\Delta_yu_3-\operatorname{div}_x\nabla_yu_1-\operatorname{div}_y\nabla_xu_1-Wv_1=0;\\
&\varepsilon^{0}:\partial_tu_0-\Delta_xu_0+\partial_\tau u_2-\Delta_yu_4-\operatorname{div}_x\nabla_yu_2-\operatorname{div}_y\nabla_xu_2-Wv_2=f,
\end{aligned}\end{equation}
and
\begin{equation}\begin{aligned}
&\varepsilon^{-\frac72}:-\Delta_y v_1=0;\\
&\varepsilon^{-\frac52}:-\Delta_y v_2=0;\\
&\varepsilon^{-\frac32}:\partial_\tau v_1-\Delta_y v_3-\operatorname{div}_x\nabla_yv_1-\operatorname{div}_y\nabla_xv_1-Wu_0=0;\\
&\varepsilon^{-\frac12}:\partial_\tau v_2-\Delta_yv_4-\operatorname{div}_x\nabla_yv_2-\operatorname{div}_y\nabla_xv_2-Wu_1=0;\\
\end{aligned}\end{equation}

\noindent
From $(2.4)_1$ and $(2.4)_3$, we know that $u_0\equiv u_0(x,t)$. Then, according to $(2.5)_1$ and $(2.5)_3$, we know that $v_1$ satisfies
$$\partial_\tau v_1-\mathcal{M}_y(W)u_0=0.$$
The equation above is solvable due to $u_0\equiv u_0(x,t)$ and $\mathcal{M}_\tau(\mathcal{M}_y(W))=0$. Then $v_1$ can be expressed as $$v_1=u_0\cdot\int_0^\tau\mathcal{M}_y(W)(s)ds+\tilde{v}_1(x,t).$$

\noindent Back to $(2.4)_4$ after in view of $(2.4)_2$, $u_1$ satisfies
$$\partial_\tau u_1-\mathcal{M}_y(W)v_1=0.$$

\noindent The equation above is solvable if and only if
$$0=\int_0^1\mathcal{M}_y(W)(\tau)\int_0^\tau\mathcal{M}_y(W)(s)dsd\tau=\frac12 \left(\int_0^1\mathcal{M}_y(W)(\tau)d\tau\right)^2.$$

\noindent Then $u_1$ can be expressed as
$$u_1=u_0\cdot\int_0^\tau \mathcal{M}_y(W)(s)\int_0^s\mathcal{M}_y(W)(t)dtds+\tilde{v}_1\int_0^\tau \mathcal{M}_y(W)(s)ds+\tilde{u}_1(x,t).$$

\noindent It follows from $(2.5)_2$ and $(2.5)_4$ that $v_2$ satisfies
$$\partial_\tau v_2-\mathcal{M}_y(Wu_1)=0.$$
The equation above is solvable if and only if
\begin{equation}\begin{aligned}0&=\int_0^1\mathcal{M}_y(W)(\tau)\int_0^\tau\mathcal{M}_y(W)(s)\int_0^s\mathcal{M}_y(W)(t)dtdsd\tau\\
&=\frac12 \int_0^1\mathcal{M}_y(W)(\tau)\left(\int_0^\tau\mathcal{M}_y(W)(s)ds\right)^2d\tau\\
&=\frac16\left(\int_0^1\mathcal{M}_y(W)(\tau)d\tau\right)^3.\end{aligned}\end{equation}

\noindent Then $v_2$ can be expressed as
$$v_2=\int_0^\tau\mathcal{M}_y(Wu_1)(s)ds+\tilde{v_2}(x,t).$$
Similar to the computation as in $(2.6)$, a direct computation yields that
$$\mathcal{M}(Wv_2)=0.$$
Consequently, $u_0$ satisfies the following effective equation:
$$\partial_tu_0-\Delta_xu_0=f.$$

Similar to the explanation in Case 1, we know that $\gamma=3/2$ is not the correct value in this case since the limiting effect of the highly oscillating term is trivial. And we need to enlarge the value of $\gamma$ in this case.\\

\noindent \textbf{Case 3}: $k_1=1$, $k_2=2$,  $\gamma=1$ and $\mathcal{M}(W)=0$.

By viewing $y=x/\varepsilon$, $\tau=t/\varepsilon^{2}$, we look for $u_\varepsilon$ in the form
\begin{equation}\begin{aligned}
u_\varepsilon=&u_0+\varepsilon u_1+\varepsilon^2u_2+\cdots,\quad u_j=u_j(x,t,y,\tau),\\
&u_j\ \text{being 1-periodic in }(y,\tau).\end{aligned}\end{equation}
Substituting the expression $(2.7)$ into the equation $(1.1)$ and matching the order of $\varepsilon$ yield that
\begin{equation}\begin{aligned}
&\varepsilon^{-2}:\partial_\tau u_0-\Delta_y u_0=0;\\
&\varepsilon^{-1}:\partial_\tau u_1-\Delta_y u_1-\operatorname{div}_y\nabla_xu_0-\operatorname{div}_x\nabla_yu_0-Wu_0=0;\\
&\varepsilon^{0}:\partial_t u_0-\Delta_x u_0+\partial_\tau u_2-\Delta_y u_2-\operatorname{div}_y\nabla_xu_1-\operatorname{div}_x\nabla_yu_1-Wu_1=f.\\
\end{aligned}\end{equation}
According to $(2.8)_1$, we know that $$u_0\equiv u_0(x,t).$$
Then $(2.8)_2$ reduces to
\begin{equation}
\partial_\tau u_1-\Delta_y u_1-Wu_0=0,
\end{equation}
which is solvable due to $\mathcal{M}(W)=0$ and $u_0\equiv u_0(x,t)$. We introduce the corrector $\chi_w(y,\tau)$ solving the following equation
\begin{equation}\left\{\begin{aligned}
\partial_\tau \chi_w-\Delta_y \chi_w=W\text{ in }\mathbb{T}^{d+1},\\
\chi_w\text{ is 1-periodic},\ \mathcal{M}(\chi_w)=0.
\end{aligned}\right.\end{equation}
Then $u_1$ can be expressed as
\begin{equation}
u_1=\chi_w\cdot u_0+\tilde{u}_1(x,t).
\end{equation}
Consequently, due to $(2.8)_3$, we conclude that $u_0$ satisfies the following homogenized equation:
\begin{equation}
\partial_t u_0-\Delta_x u_0-\mathcal{M}(W\chi_w)u_0=f.
\end{equation}
Note that $\mathcal{M}(W\chi_w)\neq 0$, then $\gamma=1$ is the correct value in the case.\\

\noindent \textbf{Case 4}: $k_1=1$, $k_2=3$, $\gamma=1$  and $\mathcal{M}(W)=0$.

By viewing $y=x/\varepsilon$, $\tau=t/\varepsilon^{3}$, we look for $u_\varepsilon$ in the form
\begin{equation}\begin{aligned}
u_\varepsilon=&u_0+\varepsilon u_1+\varepsilon^2u_2+\cdots,\quad u_j=u_j(x,t,y,\tau),\\
&u_j\ \text{being 1-periodic in }(y,\tau).\end{aligned}\end{equation}
Substituting the expression $(2.13)$ into the equation $(1.1)$ and matching the order of $\varepsilon$ yield that
\begin{equation}\begin{aligned}
&\varepsilon^{-3}:\partial_\tau u_0=0;\\
&\varepsilon^{-2}:\partial_\tau u_1-\Delta_y u_0=0;\\
&\varepsilon^{-1}:\partial_\tau u_2-\Delta_y u_1-\operatorname{div}_y\nabla_xu_0-\operatorname{div}_x\nabla_yu_0-Wu_0=0;\\
&\varepsilon^{0}:\partial_tu_0-\Delta_xu_0+\partial_\tau u_3-\Delta_y u_2-\operatorname{div}_y\nabla_xu_1-\operatorname{div}_x\nabla_yu_1-Wu_1=f.
\end{aligned}\end{equation}
First, according to $(2.14)_1$ and $(2.14)_2$, we know $$u_0\equiv u_0(x,t).$$
Next, due to $(2.14)_2$ and $(2.14)_3$, we know that $u_1$ satisfies
\begin{equation}
\Delta_yu_1+\mathcal{M}_\tau(W)u_0=0,
\end{equation}
which is solvable due to $\mathcal{M}_y(\mathcal{M}_\tau(W))=0$ and $u_0\equiv u_0(x,t)$. Moreover, we introduce the corrector $\chi_y(y)$ solving the following equation
\begin{equation}\left\{\begin{aligned}
\Delta_y \chi_y=-\mathcal{M}_\tau(W)\quad\text{ in }\quad\mathbb{T}^{d},\\
\chi_y\text{ is 1-periodic},\ \mathcal{M}_y(\chi_y)=0.
\end{aligned}\right.\end{equation}
Then $u_1$ can be expressed as
\begin{equation}
u_1=\chi_y\cdot u_0+\tilde{u}_1(x,t).
\end{equation}
Consequently, due to $(2.14)_4$, we conclude that $u_0$ satisfies the following homogenized equation:
\begin{equation}
\partial_t u_0-\Delta_x u_0-\mathcal{M}(W\chi_y)u_0=f.
\end{equation}
Note that $\mathcal{M}(W\chi_y)\neq 0$, then $\gamma=1$ is the correct value in the case.\\

\noindent \textbf{Case 5}: $k_1=1$, $k_2=1$, $\gamma=\frac12$ and $\mathcal{M}(W)=0$.\\

By viewing $y=x/\varepsilon$, $\tau=t/\varepsilon$, we look for $u_\varepsilon$ in the form
\begin{equation}\begin{aligned}
u_\varepsilon=&u_0+\varepsilon u_1+\varepsilon^2u_2+\cdots+\varepsilon^{\frac12}v_1+\varepsilon^{\frac32}v_2+\cdots,\\
&u_j=u_j(x,t,y,\tau),\ v_j=v_j(x,t,y,\tau),\\
&\ \quad u_j,v_j\ \text{being 1-periodic in }(y,\tau).\end{aligned}\end{equation}
Substituting the expression $(2.19)$ into the equation $(1.1)$ and matching the order of $\varepsilon$ yield that
\begin{equation}\begin{aligned}
&\varepsilon^{-2}:-\Delta_y u_0=0;\\
&\varepsilon^{-1}:\partial_\tau u_0-\Delta_y u_1-\operatorname{div}_y\nabla_xu_0-\operatorname{div}_x\nabla_yu_0=0;\\
&\varepsilon^{0}:\partial_tu_0-\Delta_xu_0+\partial_\tau u_1-\Delta_y u_2-\operatorname{div}_y\nabla_xu_1-\operatorname{div}_x\nabla_yu_1-Wv_1=f,\\
\end{aligned}\end{equation}
and
\begin{equation}\begin{aligned}
&\varepsilon^{-\frac32}:-\Delta_y v_1=0;\\
&\varepsilon^{-\frac12}:\partial_\tau v_1-\Delta_y v_2-\operatorname{div}_y\nabla_xv_1-\operatorname{div}_x\nabla_yv_1-Wu_0=0.\\
\end{aligned}\end{equation}
First, according to $(2.20)_1$ and $(2.20)_2$, we know $$u_0\equiv u_0(x,t).$$
Next, due to $(2.21)_1$ and $(2.21)_2$, we know that $v_1$ satisfies
\begin{equation}
\partial_\tau v_1-\mathcal{M}_y(W)u_0=0,
\end{equation}
which is solvable due to $\mathcal{M}_\tau(\mathcal{M}_y(W))=0$ and $u_0\equiv u_0(x,t)$.
Then $v_1$ can be expressed as
\begin{equation}
v_1=u_0\cdot \int_0^\tau \mathcal{M}_y(W)(s)ds+\tilde{v}_1(x,t).
\end{equation}
Consequently, due to $(2.20)_3$, we conclude that $u_0$ satisfies the following homogenized equation:
\begin{equation}
\partial_t u_0-\Delta_x u_0-\bar{W}u_0=f,
\end{equation}\\
where $$\begin{aligned}\bar{W}&=:\int_{\mathbb{T}^{d+1}}W(y,\tau)\int_0^{\tau}\mathcal{M}_y(W)(s)dsdyd\tau\\
&=\frac12\int_{\mathbb{T}}\partial_\tau \left(\int_0^{\tau}\mathcal{M}_y(W)(s)ds\right)^2dyd\tau\\
&=\frac12 \left(\int_0^{1}\mathcal{M}_y(W)(\tau)d\tau\right)^2=0.\end{aligned}$$
Similar to the explanation in case 1, we know that $\gamma=1/2$ is not the correct value in this case since the limiting effect of the highly oscillating term is trivial. And we need to enlarge the value of $\gamma$ in this case.\\

\noindent \textbf{Case 6}: $k_1=1$, $k_2=1$, $\gamma=1$ and $\mathcal{M}_y(W)=0$ for every $\tau\in \mathbb{T}$.

By viewing $y=x/\varepsilon$, $\tau=t/\varepsilon$, we look for $u_\varepsilon$ in the form
\begin{equation}\begin{aligned}
u_\varepsilon=&u_0+\varepsilon u_1+\varepsilon^2u_2+\cdots,\quad u_j=u_j(x,t,y,\tau),\\
&u_j\ \text{being 1-periodic in }(y,\tau).\end{aligned}\end{equation}
Substituting the expression $(2.25)$ into the equation $(1.1)$ and matching the order of $\varepsilon$ yield that
\begin{equation}\begin{aligned}
&\varepsilon^{-2}:-\Delta_y u_0=0;\\
&\varepsilon^{-1}:\partial_\tau u_0-\Delta_y u_1-\operatorname{div}_y\nabla_xu_0-\operatorname{div}_x\nabla_yu_0-Wu_0=0;\\
&\varepsilon^{0}:\partial_tu_0-\Delta_xu_0+\partial_\tau u_1-\Delta_y u_2-\operatorname{div}_y\nabla_xu_1-\operatorname{div}_x\nabla_yu_1-Wu_1=f.\\
\end{aligned}\end{equation}

First, according to $(2.26)_1$ and $(2.26)_2$, we know $u_0$ satisfies
$$\partial_\tau u_0-\mathcal{M}_y(W)u_0=0.$$
In general, the equation above is not solvable unless $\mathcal{M}_y(W)=0$ for every $\tau\in \mathbb{T}$. Moreover, even if there exist $W$ and $u_0$ satisfying the equation above, we know that $u_0=u_0(x,t,\tau)$, which may be not expected since we expect that $u_0=u_0(x,t)$ satisfies the homogenized equation. Then, we assume that $\mathcal{M}_y(W)=0$ for every $\tau\in \mathbb{T}$, which immediately implies that $u_0\equiv u_0(x,t)$.

Next, due to $(2.26)_2$, we know that $u_1$ satisfies
\begin{equation}
\Delta_yu_1+Wu_0=0,
\end{equation}
which is solvable due to $\mathcal{M}_y(W)=0$ for every $\tau\in \mathbb{T}$ and $u_0\equiv u_0(x,t)$. For every $\tau\in\mathbb{T}$, we introduce the corrector $\chi_\tau(y,\tau)$ solving the following equation
\begin{equation}\left\{\begin{aligned}
\Delta_y \chi_\tau=-W\quad\quad\text{in }\mathbb{T}^{d},\\
\chi_\tau\text{ is 1-periodic},\ \mathcal{M}_y(\chi_\tau)(\tau)=0.
\end{aligned}\right.\end{equation}
Then $u_1$ can be expressed as
\begin{equation}
u_1=\chi_\tau\cdot u_0+\tilde{u}_1(x,t,\tau).
\end{equation}
Consequently, due to $(2.26)_3$, we conclude that $u_0$ satisfies the following homogenized equation:
\begin{equation}
\partial_t u_0-\Delta_x u_0-\mathcal{M}(W\chi_\tau)u_0=f.
\end{equation}
Note that $\mathcal{M}(W\chi_\tau)\neq 0$, then $\gamma=1$ is the correct value in this case.\\

In conclusion, we investigate the parabolic equation with time varying potentials in periodic homogenization under 5 different conditions (for simplicity, we assume $k_1=1$ and $k_2=k$), which are listed as below: \\

\noindent \textbf{Assumption 1}: $3\geq k>2$, $\gamma=k-1$ and $\mathcal{M}_\tau(W)(y)=0$ for every $y\in \mathbb{T}^d$.\\

\noindent \textbf{Assumption 2}: $1< k<2$, $\gamma=1$ and $\mathcal{M}(W)=0$.\\

\noindent \textbf{Assumption 3}: $k=2$, $\gamma=1$ and $\mathcal{M}(W)=0$.\\

\noindent \textbf{Assumption 4}: $k>2$, $\gamma=1$ and $\mathcal{M}(W)=0$.\\

\noindent \textbf{Assumption 5}: $0\leq k\leq 1$, $\gamma=1$ and $\mathcal{M}_y(W)=0$ for every $y\in\mathbb{T}^d$.\\

It seems that we forget the case for $k>3$. In fact, in view of the computation in Case 1, we additionally need to assume that $\mathcal{M}_\tau(W)(y)=0$ for every $y\in\mathbb{T}^d$ and $\gamma>1$, as in Assumption 1. However, the effective equation turns out to be the heat equation without a nontrivial potential for any $\gamma\in (0,\infty)$, which implies that there is no correct value for $\gamma$ in this case. See Remark 3.7 for more details.

\section{Main Results}
In this section, we introduce our main results, including the homogenization results and the convergence rates, under different assumptions at the end of Section 2. Note that in the following theorems, we always assume that $\Omega$ is a bounded $C^{1,1}$ domain in $\mathbb{R}^d$ with $d\geq 2$, $T\in (0,\infty)$
is a positive constant, and $k_1=1$, $k_2=k\in [0,\infty)$.

\begin{thm}
Assume that $k=2$, $\gamma=1$, $W\in L^\infty(\mathbb{T}^{d+1})$ with $\mathcal{M}(W)=0$, $f\in L^2(\Omega_T)$ and $g\in H^1_0(\Omega)$, then the homogenized equation for $(1.1)$ is given by
\begin{equation}\left\{\begin{aligned}
\partial_t {u}_0-\Delta {u}_0 -\mathcal{M}(\chi_1W){u}_0&={f}, \quad\quad\text{in }\Omega\times (0,T),\\
{u}_0&=0, \quad\quad\text{on }\partial\Omega\times (0,T),\\
{u}_0&=g, \quad\quad\text{on }\Omega\times \{t=0\},
\end{aligned}\right.\end{equation}
where the corrector $\chi_1(y,\tau)$ is defined as
\begin{equation}\left\{\begin{aligned}
\partial_\tau\chi_1 -\Delta_y\chi_1=W\quad\text{ in }\quad\mathbb{T}^{d+1},\\
\chi_1\text{ is 1-periodic with }\mathcal{M}(\chi_1)=0.
\end{aligned}\right.\end{equation}
Moreover, we have the following convergence rates estimates:
\begin{equation}||u_\varepsilon-u_0||_{L^\infty_TL^2_x}\leq C\varepsilon \left(||f||_{L^2(\Omega_T)}+||g||_{H^1(\Omega)}\right),\end{equation}
where the constant $C$ depends only on $||W||_{L^\infty(\mathbb{T}^{d+1})}$, $d$, $\Omega$ and $T$.
\end{thm}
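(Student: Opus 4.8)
The plan is to use the two-scale ansatz derived in Case 3 as a scaffold, but correct it so that the boundary and initial conditions are exactly matched, and then run a Gronwall-type energy estimate on the error. Concretely, set $w_\varepsilon := u_\varepsilon - u_0 - \varepsilon\,\chi_1^\varepsilon\, u_0$, where $\chi_1$ is the corrector of $(3.2)$ and $u_0$ solves $(3.1)$. Since $\chi_1 \in L^\infty(\mathbb{T}^{d+1})$ (it solves a parabolic cell problem with bounded right-hand side, so by parabolic regularity $\chi_1, \nabla_y\chi_1 \in L^\infty$), the corrector term is $O(\varepsilon)$ in $L^\infty_T L^2_x$, so it suffices to bound $w_\varepsilon$. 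Applying $\partial_t - \Delta - \varepsilon^{-1}W^\varepsilon$ to $w_\varepsilon$ and using both the equation $(1.1)$ for $u_\varepsilon$ (with $k=2$, $\gamma=1$), the homogenized equation $(3.1)$ for $u_0$, and the corrector identity $\partial_\tau\chi_1 - \Delta_y\chi_1 = W$, the leading singular terms $\varepsilon^{-1}$ and $\varepsilon^{0}$ cancel by exactly the algebra of $(2.8)$–$(2.12)$: the $\varepsilon^{-1}$ term cancels because $\chi_1$ solves the cell problem, and the $\varepsilon^0$ term produces precisely $\mathcal{M}(W\chi_1)u_0$, which is removed by the potential in $(3.1)$. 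What remains on the right-hand side is a collection of terms each carrying an explicit power $\varepsilon$ or $\varepsilon^2$: terms like $\varepsilon\,\chi_1^\varepsilon\,\partial_t u_0$, $\varepsilon\,(\operatorname{div}_x\nabla_y\chi_1)^\varepsilon u_0$ combined with $(W\chi_1 - \mathcal{M}(W\chi_1))^\varepsilon u_0$ (this fluctuation must be written as a divergence or handled by a further flux corrector), $\varepsilon\,\chi_1^\varepsilon\,\nabla_x u_0$ under a gradient, and $\varepsilon^2$-order pieces.

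The energy estimate then goes as follows. Multiply the equation for $v_\varepsilon^{w} := e^{-Nt} w_\varepsilon$ (using the reduction $(1.2)$ with $N$ chosen large depending on $\|W\|_{L^\infty}$ so that $N - \varepsilon^{-1}W^\varepsilon$ contributes favorably after absorbing — actually here one keeps $\varepsilon^{-1}W^\varepsilon w_\varepsilon$ and tests against $w_\varepsilon$: the term $\int \varepsilon^{-1}W^\varepsilon |w_\varepsilon|^2$ is the dangerous one) by $w_\varepsilon$ and integrate over $\Omega$. The key difficulty, and the one I expect to be the main obstacle, is controlling $\int_\Omega \varepsilon^{-1} W^\varepsilon |w_\varepsilon|^2\,dx$: it is not small a priori. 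The resolution is to introduce a second corrector $\theta_1$ (a solution of $\partial_\tau\theta_1 - \Delta_y\theta_1 = \chi_1 W - \mathcal{M}(\chi_1 W)$, or to use $\chi_1$ itself in an exponential-type change of variables $u_\varepsilon \mapsto e^{-\varepsilon\chi_1^\varepsilon} u_\varepsilon$ as is classical for Schrödinger-type homogenization) so that, after this substitution, the $\varepsilon^{-1}$ potential is traded for bounded (in $\varepsilon$) zeroth- and first-order terms; alternatively one writes $\varepsilon^{-1}W^\varepsilon = \partial_\tau(\cdot) - \Delta_y(\cdot)$ acting on $\varepsilon\chi_1^\varepsilon$ and integrates by parts to move derivatives onto $w_\varepsilon$, producing $\int \nabla\chi_1^\varepsilon \cdot \nabla w_\varepsilon\, w_\varepsilon$ type terms that are handled by Cauchy–Schwarz and absorbed into $\|\nabla w_\varepsilon\|_{L^2}^2$ plus lower order. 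This is exactly the place where one needs the ``more exact'' estimates on $\nabla^2 u_\varepsilon$ and $\partial_t u_\varepsilon$ that the introduction flags: one needs $\|\partial_t u_0\|_{L^2(\Omega_T)} + \|\nabla^2 u_0\|_{L^2(\Omega_T)} \le C(\|f\|_{L^2(\Omega_T)} + \|g\|_{H^1(\Omega)})$, which holds by standard parabolic regularity for the \emph{homogenized} equation $(3.1)$ since $\mathcal{M}(\chi_1 W)$ is a constant, so no $\varepsilon$-loss occurs here.

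After the dangerous term is tamed, every remaining right-hand side term is estimated by Cauchy–Schwarz: the $O(\varepsilon)$ terms against $w_\varepsilon$ give $\varepsilon^2(\|\partial_t u_0\|_{L^2}^2 + \|\nabla^2 u_0\|_{L^2}^2 + \|\nabla u_0\|_{L^2}^2) + \|w_\varepsilon\|_{L^2}^2$, and the gradient-form terms $\varepsilon\int (\chi_1^\varepsilon \nabla u_0)\cdot\nabla w_\varepsilon$ give $\varepsilon^2\|\nabla u_0\|_{L^2}^2$ plus a fraction of $\|\nabla w_\varepsilon\|_{L^2}^2$ absorbed by the good term $\int|\nabla w_\varepsilon|^2$ on the left. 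One must also check the data: $w_\varepsilon|_{t=0} = g - g - \varepsilon\chi_1^\varepsilon g = -\varepsilon\chi_1^\varepsilon g$, which is $O(\varepsilon)$ in $L^2(\Omega)$ because $\chi_1$ is bounded and $g\in H^1_0 \subset L^2$; and $w_\varepsilon|_{\partial\Omega} = -\varepsilon\chi_1^\varepsilon g$ is $O(\varepsilon)$ but nonzero, so strictly one should either subtract a boundary-layer corrector or, more cheaply, note $\|\varepsilon\chi_1^\varepsilon g\|$ is already of the target order and absorb the boundary discrepancy by testing against $w_\varepsilon$ minus a harmless $H^1$ extension of its boundary trace — a standard device costing only $\varepsilon\|g\|_{H^1}$. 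Integrating the resulting differential inequality $\frac{d}{dt}\|w_\varepsilon\|_{L^2(\Omega)}^2 + \|\nabla w_\varepsilon\|_{L^2(\Omega)}^2 \le C\|w_\varepsilon\|_{L^2(\Omega)}^2 + C\varepsilon^2(\|f\|_{L^2(\Omega_T)}^2 + \|g\|_{H^1(\Omega)}^2)$ and applying Gronwall's inequality on $[0,T]$ yields $\|w_\varepsilon\|_{L^\infty_T L^2_x} \le C\varepsilon(\|f\|_{L^2(\Omega_T)} + \|g\|_{H^1(\Omega)})$ with $C = C(\|W\|_{L^\infty}, d, \Omega, T)$, and the triangle inequality with the $O(\varepsilon)$ bound on $\varepsilon\chi_1^\varepsilon u_0$ finishes $(3.3)$.
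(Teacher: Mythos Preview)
Your overall strategy is sound and would reach the $O(\varepsilon)$ rate, but it differs from the paper's in one structural choice that is worth flagging. You take the classical ansatz $w_\varepsilon=u_\varepsilon-u_0-\varepsilon\chi_1^\varepsilon u_0$, with the corrector multiplied by the \emph{homogenized} solution. The paper instead sets (after the change $v_\varepsilon=e^{-Nt}u_\varepsilon$) $\tilde v_\varepsilon=v_\varepsilon-\varepsilon\chi_1^\varepsilon v_\varepsilon$, i.e.\ the corrector is multiplied by the \emph{oscillating} solution. The payoff is that a short computation (their (4.7)) shows $\tilde v_\varepsilon$ satisfies $\partial_t\tilde v_\varepsilon-\Delta\tilde v_\varepsilon+N\tilde v_\varepsilon=f_1(v_\varepsilon)$ with \emph{no} singular potential left: the dangerous $\varepsilon^{-1}W^\varepsilon$ has been completely absorbed, and moreover $\tilde v_\varepsilon\in W^{2,1}_2(\Omega_T)$ uniformly. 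Then $w_\varepsilon=\tilde v_\varepsilon-v_0$ solves a clean heat-type equation and one only needs the second corrector $\chi_{1-1}$ (solving $\partial_\tau\chi_{1-1}-\Delta_y\chi_{1-1}=2|\nabla_y\chi_1|^2-\mathcal M(\chi_1W)-\chi_1W$) to handle the residual $O(1)$ oscillation.

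Your route keeps $\varepsilon^{-1}W^\varepsilon w_\varepsilon$ in the error equation and tames it by writing $\varepsilon^{-1}W^\varepsilon=\varepsilon(\partial_t-\Delta_x)\chi_1^\varepsilon$ and integrating by parts; this works (it is exactly how the paper proves the uniform a~priori estimate, Lemma~4.4), but you then have to re-substitute the equation for $\partial_t w_\varepsilon$, and the same substitution is needed again when you use the second corrector on $(W^\varepsilon\chi_1^\varepsilon-\mathcal M(W\chi_1))u_0$, because $\|\partial_t w_\varepsilon\|_{L^2}$ is only $O(\varepsilon^{-1})$ with your choice of $w_\varepsilon$ (whereas it is $O(1)$ for the paper's $w_\varepsilon$). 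So your argument closes, but with one more layer of bookkeeping. Two small corrections: your boundary worry is unnecessary, since $u_0|_{\partial\Omega}=0$ gives $w_\varepsilon|_{\partial\Omega}=0$ exactly (you wrote $g$ where $u_0$ was meant); and the ``$\varepsilon^0$ term produces precisely $\mathcal M(W\chi_1)u_0$'' is not quite right---it produces $W^\varepsilon\chi_1^\varepsilon u_0$, whose fluctuation you correctly note must be treated by a second corrector.
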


\begin{thm}
Assume that $k>2$, $\gamma=1$, $W\in L^\infty(\mathbb{T}^{d+1})$ with $\mathcal{M}(W)=0$ as well  as $\nabla_y\left(W-\mathcal{M}_\tau(W)\right)\in L^\infty(\mathbb{T}^{d+1})$, $f\in L^2(\Omega_T)$ and $g\in H^1_0(\Omega)$, then the homogenized equation for $(1.1)$ is given by
\begin{equation}\left\{\begin{aligned}
\partial_t {u}_0-\Delta {u}_0+\mathcal{M}(\chi_{2}W){u}_0&={f}, \quad\quad\text{in }\Omega\times (0,T),\\
{u}_0&=0, \quad\quad\text{on }\partial\Omega\times (0,T),\\
{u}_0&=g, \quad\quad\text{on }\Omega\times \{t=0\}.
\end{aligned}\right.\end{equation}
where the corrector $\chi_2(y)$ is defined as
\begin{equation}\left\{\begin{aligned}
\Delta_y\chi_{2}=\mathcal{M}_\tau(W)(y)\quad\quad\text{ in }\quad\mathbb{T}^{d},\\
\chi_{2}\text{ is 1-periodic with }\mathcal{M}_y(\chi_{2})=0,
\end{aligned}\right.\end{equation}
Moreover, we have the following convergence rates estimates:
\begin{equation}||u_\varepsilon-u_0||_{L^\infty_TL^2_x}\leq C\max(\varepsilon^{k-2},\varepsilon) \left(||f||_{L^2(\Omega_T)}+||g||_{H^1(\Omega)}\right),\end{equation}
where the constant $C$ depends only on $W$, $d$, $\Omega$ and $T$.
\end{thm}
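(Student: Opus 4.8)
The plan is to run the standard two-scale correction argument adapted to this more singular regime, but with careful bookkeeping of $\varepsilon$-powers since $k>2$ pushes several error terms to orders that are only borderline controllable. First I would set up the first-order approximation suggested by Case 4 of Section 2: let
$w_\varepsilon = u_\varepsilon - u_0 - \varepsilon\,\chi_2(x/\varepsilon)\,u_0$,
or rather its exponentially-damped version $e^{-Nt}w_\varepsilon$ using the $v_\varepsilon$ formulation of $(1.2)$ so that a Gr\"onwall/energy estimate closes with a good sign. The choice of $\chi_2$ (depending only on $y$, solving $\Delta_y\chi_2 = \mathcal{M}_\tau(W)$) is forced by the $\varepsilon^{-1}$ equation in $(2.14)$; the hypothesis $\nabla_y(W-\mathcal{M}_\tau(W))\in L^\infty(\mathbb{T}^{d+1})$ is exactly what is needed to make the time-oscillating remainder $W-\mathcal{M}_\tau(W)$ integrable against a second corrector without losing powers of $\varepsilon$. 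Concretely I expect to also need an anti-derivative-type corrector $\theta(y,\tau)$ with $\partial_\tau\theta = W-\mathcal{M}_\tau(W)$ (solvable since $\mathcal{M}_\tau(W-\mathcal{M}_\tau(W))=0$), contributing a term of size $\varepsilon^k$ that accounts for the $\varepsilon^{k-2}$ in the final rate after two $x$-derivatives fall on it.

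Next I would compute $\partial_t - \Delta$ applied to $w_\varepsilon$ (equivalently to the damped version), using the equations satisfied by $u_\varepsilon$ and $u_0$ and the corrector equations to cancel the $O(\varepsilon^{-1})$ and $O(\varepsilon^0)$ terms. What remains is a right-hand side of the schematic form
$F_\varepsilon = \varepsilon\,(\text{div of bounded}\times u_0,\nabla u_0) + \varepsilon^{-1}(W-\mathcal{M}_\tau(W))^\varepsilon u_0 + (\text{lower order}),$
and the genuinely singular piece $\varepsilon^{-1}(W-\mathcal{M}_\tau(W))^\varepsilon u_0$ must be rewritten, using $\partial_\tau\theta = W-\mathcal{M}_\tau(W)$ and the scaling $\partial_t[\theta(x/\varepsilon,t/\varepsilon^k)] = \varepsilon^{-k}(\partial_\tau\theta)^\varepsilon$, as a full time-derivative $\varepsilon^{k-1}\partial_t[\theta^\varepsilon u_0]$ plus commutator terms of the form $\varepsilon^{k-1}\theta^\varepsilon\partial_t u_0$ and, after a spatial integration by parts to move the $\varepsilon^{-2}$ from $\Delta_y\theta^\varepsilon$, terms of size $\varepsilon^{k-2}$ and $\varepsilon^{k-1}$ involving $\nabla_y\theta$ (bounded by hypothesis) times $\nabla u_0$ and $\nabla^2 u_0$. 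Then I would test the equation for $e^{-Nt}w_\varepsilon$ against $e^{-Nt}w_\varepsilon$ itself (it vanishes on the lateral boundary; its initial value is $-\varepsilon\chi_2(x/\varepsilon)g$, of size $\varepsilon\|g\|_{H^1}$ since $\chi_2\in W^{1,\infty}$ and $g\in H^1_0$), absorb the full-time-derivative term by an integration by parts in $t$ (this is where the $L^\infty_T L^2_x$ norm is produced, not merely $L^2_T L^2_x$), and use the parabolic a priori estimates on $u_0$ — namely $\|u_0\|_{L^\infty_T L^2_x} + \|\nabla u_0\|_{L^2(\Omega_T)} + \|\nabla^2 u_0\|_{L^2(\Omega_T)} + \|\partial_t u_0\|_{L^2(\Omega_T)} \le C(\|f\|_{L^2(\Omega_T)}+\|g\|_{H^1(\Omega)})$, which holds for the homogenized equation $(3.4)$ with bounded zeroth-order coefficient by classical theory since $\Omega$ is $C^{1,1}$ — to bound every piece by $C\max(\varepsilon^{k-2},\varepsilon)(\|f\|_{L^2}+\|g\|_{H^1})$. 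Finally, the triangle inequality $\|u_\varepsilon-u_0\|_{L^\infty_T L^2_x} \le \|w_\varepsilon\|_{L^\infty_T L^2_x} + \varepsilon\|\chi_2(x/\varepsilon)u_0\|_{L^\infty_T L^2_x} + (\text{the }\theta\text{ term, }O(\varepsilon^k))$ gives the claimed rate, the $\varepsilon$ and $\varepsilon^{k-2}$ both appearing and the max selecting whichever dominates (note $k-2 < 1$ exactly when $k<3$, so the two regimes genuinely compete).

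The main obstacle is the handling of the singular term $\varepsilon^{-1}(W-\mathcal{M}_\tau(W))^\varepsilon u_0$: one must extract two "free" powers of $\varepsilon$ from it, one via the anti-derivative-in-$\tau$ corrector $\theta$ (giving $\varepsilon^k$ from the time scaling, i.e. a net $\varepsilon^{k-1}$) and then a further integration by parts in $t$ to turn $\partial_t$ into something absorbable in the $L^\infty_T L^2_x$ energy identity — while simultaneously controlling the spatial part $\Delta_y\theta^\varepsilon$, which carries $\varepsilon^{-2}$ and is the true source of the $\varepsilon^{k-2}$ bottleneck. This is precisely the "estimate the Hessian and the $t$-derivative more exactly" difficulty flagged in the introduction, and it is why the hypothesis on $\nabla_y(W-\mathcal{M}_\tau(W))$ is indispensable: without an $L^\infty$ bound on $\nabla_y\theta$ one only gets $L^2$-type control and loses the endpoint. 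Everything else — the energy estimate, the Gr\"onwall step, the a priori bounds on $u_0$ — is routine once the remainder $F_\varepsilon$ has been put in the form "full $t$-derivative of $O(\varepsilon^{k-1})$ plus spatial divergence of $O(\max(\varepsilon^{k-2},\varepsilon))$ plus $L^2(\Omega_T)$ function of the same size."
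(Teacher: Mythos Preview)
Your overall two-corrector strategy (a spatial corrector $\chi_2$ and a $\tau$-antiderivative $\theta$) is right, but there is a structural gap in how you set up $w_\varepsilon$. By attaching the correctors to $u_0$ you do \emph{not} cancel the $O(\varepsilon^{-1})$ terms: after your substitution $u_\varepsilon=w_\varepsilon+u_0+\varepsilon\chi_2^\varepsilon u_0$, the equation for $w_\varepsilon$ still carries the potential term $\varepsilon^{-1}W^\varepsilon w_\varepsilon$ on the left-hand side. Exponential damping $e^{-Nt}$ cannot absorb this, since the required $N$ would scale like $\varepsilon^{-1}$. To close the energy estimate you would have to redo, on $w_\varepsilon$, exactly the manipulation that the paper carries out in Lemma~4.5 (splitting $W=W_1+W_2$, integrating by parts in $t$ against $\chi_y=\theta$, and substituting the equation for $\partial_t v_\varepsilon$), and this in turn presupposes the uniform bound on $u_\varepsilon$ itself --- a step you never mention. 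The paper avoids this feedback altogether by attaching the correctors to $v_\varepsilon$ rather than $v_0$: setting $\tilde v_\varepsilon=v_\varepsilon-\varepsilon^{k-1}\chi_y^\varepsilon v_\varepsilon+\varepsilon\chi_2^\varepsilon v_\varepsilon$ and substituting the equation for $\partial_t v_\varepsilon$ wherever it appears (see (4.28)--(4.29)) produces a clean equation $\partial_t\tilde v_\varepsilon-\Delta\tilde v_\varepsilon+N\tilde v_\varepsilon=f_{21}(v_\varepsilon)$ with \emph{no} singular zeroth-order term, so that standard energy estimates apply directly to $w_\varepsilon=\tilde v_\varepsilon-v_0$.

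A second, smaller gap: among what you call ``lower order'' there sits the $O(1)$ oscillating piece $\big(\chi_2^\varepsilon W^\varepsilon+2|\nabla_y\chi_2^\varepsilon|^2+\mathcal{M}(\chi_2W)\big)u_0$ (or its analogue with your sign convention), which has zero mean but is not small in $L^2$. To extract an $O(\varepsilon)$ bound from it one needs a further corrector solving a cell problem with this function as source; the paper introduces $\chi_{2-1}$ in (4.38) for exactly this purpose and then integrates by parts as in (4.39)--(4.42). Without this step your energy estimate stalls at $O(1)$ on that contribution. Finally, note a sign slip: with $\Delta_y\chi_2=\mathcal M_\tau(W)$ the first-order expansion from Case~4 is $u_\varepsilon\approx u_0-\varepsilon\chi_2^\varepsilon u_0$, so $w_\varepsilon$ should read $u_\varepsilon-u_0+\varepsilon\chi_2^\varepsilon u_0$.
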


\begin{thm}
Assume that $1<k<2$, $\gamma=1$, $W\in L^\infty(\mathbb{T}^{d+1})$ and $\partial_\tau\left(W-\mathcal{M}_y(W)\right)\in L^\infty(\mathbb{T}^{d+1})$ with $\mathcal{M}(W)=0$, $f\in L^2(\Omega_T)$ and $g\in H^1_0(\Omega)$, then the homogenized equation for $(1.1)$ is given by
\begin{equation}\left\{\begin{aligned}
\partial_t {u}_0-\Delta {u}_0+\mathcal{M}(\chi_3 W)u_0&=f, \quad\quad\text{in }\Omega\times (0,T),\\
{u}_0&=0, \quad\quad\text{on }\partial\Omega\times (0,T),\\
{u}_0&=g, \quad\quad\text{on }\Omega\times \{t=0\}.
\end{aligned}\right.\end{equation}

where, for every $\tau\in \mathbb{T}$, the corrector $\chi_3(y,\tau)$ is defined as,
\begin{equation}\left\{\begin{aligned}
\Delta_y \chi_3=W-\mathcal{M}_y(W)\quad\text{ in }\quad\mathbb{T}^{d},\\
\chi_3\text{ is 1-periodic},\ \mathcal{M}_y(\chi_3)(\tau)=0.
\end{aligned}\right.\end{equation}
And we have the following convergence rates estimates:
\begin{equation}||u_\varepsilon-u_0||_{L^\infty_TL^2_x}\leq C\max(\varepsilon^{2-k},\varepsilon^{k-1}) \left(||f||_{L^2(\Omega_T)}+||g||_{H^1(\Omega)}\right),\end{equation}
where the constant $C$ depends only on $W$, $T$, $d$, $\Omega$ and $k$. Moreover,
the constant $C$ will blow up as $k\rightarrow 1$.
\end{thm}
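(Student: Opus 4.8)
The plan is to exploit the very different nature of the fast spatial and the fast temporal oscillations in the potential $\varepsilon^{-1}W^\varepsilon$, which is also what produces the two competing powers $\varepsilon^{2-k}$ and $\varepsilon^{k-1}$. Set $\Theta(\tau):=\int_0^\tau\mathcal{M}_y(W)(s)\,ds$; it is Lipschitz and $1$-periodic since $\int_0^1\mathcal{M}_y(W)=\mathcal{M}(W)=0$. With $\phi^\varepsilon(t):=\varepsilon^{k-1}\Theta(t/\varepsilon^{k})$ (so $\phi^\varepsilon(0)=0$, $\|\phi^\varepsilon\|_{L^\infty}\le\varepsilon^{k-1}\|\Theta\|_{L^\infty}$, $\partial_t\phi^\varepsilon=\varepsilon^{-1}\mathcal{M}_y(W)^\varepsilon$), the substitution $z_\varepsilon:=e^{-\phi^\varepsilon}u_\varepsilon$ turns $(1.1)$ into
\begin{equation*}
\partial_t z_\varepsilon-\Delta z_\varepsilon-\varepsilon^{-1}\bigl(W-\mathcal{M}_y(W)\bigr)^\varepsilon z_\varepsilon=e^{-\phi^\varepsilon}f\ \text{ in }\ \Omega_T,\qquad z_\varepsilon|_{\partial\Omega}=0,\quad z_\varepsilon|_{t=0}=g,
\end{equation*}
and by the corrector equation $(3.8)$ the surviving singular potential is a perfect fast Laplacian, $\varepsilon^{-1}(W-\mathcal{M}_y(W))^\varepsilon=\varepsilon^{-1}(\Delta_y\chi_3)^\varepsilon=\varepsilon\,\Delta_x(\chi_3^\varepsilon)$. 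Since $e^{\pm\phi^\varepsilon}$ is uniformly bounded and $\|1-e^{-\phi^\varepsilon}\|_{L^\infty}\le C\varepsilon^{k-1}$, we get $\|u_\varepsilon-z_\varepsilon\|_{L^\infty_TL^2_x}\le C\varepsilon^{k-1}\|z_\varepsilon\|_{L^\infty_TL^2_x}$, and likewise the homogenized limit of the $z_\varepsilon$-problem differs from $u_0$ by $O(\varepsilon^{k-1})$; this is the $\varepsilon^{k-1}$ branch of the asserted estimate.

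First I would prove the uniform bounds for $z_\varepsilon$. Testing the $z_\varepsilon$-equation against $z_\varepsilon$ and integrating the potential term by parts once in $x$ gives $\varepsilon\int_\Omega\Delta_x(\chi_3^\varepsilon)z_\varepsilon^2=-2\int_\Omega(\nabla_y\chi_3)^\varepsilon\cdot z_\varepsilon\nabla z_\varepsilon$, where $\nabla_y\chi_3\in L^\infty(\mathbb{T}^{d+1})$ by elliptic regularity on the torus (as $\Delta_y\chi_3=W-\mathcal{M}_y(W)\in L^\infty$); this is absorbed by $\tfrac12\|\nabla z_\varepsilon\|_{L^2}^2+C\|z_\varepsilon\|_{L^2}^2$, and Grönwall yields $\|z_\varepsilon\|_{L^\infty_TL^2_x}+\|z_\varepsilon\|_{L^2_TH^1_x}\le C(\|f\|_{L^2(\Omega_T)}+\|g\|_{L^2(\Omega)})$ uniformly in $\varepsilon$, with $\|\partial_t z_\varepsilon\|_{L^2_TH^{-1}_x}$ bounded from the equation since $\varepsilon\Delta_x(\chi_3^\varepsilon)z_\varepsilon=\operatorname{div}((\nabla_y\chi_3)^\varepsilon z_\varepsilon)-(\nabla_y\chi_3)^\varepsilon\nabla z_\varepsilon\in L^2_TH^{-1}_x$. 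This supplies the uniform $L^2_TH^1_x$ bound whose absence is the first difficulty flagged in the introduction; the change of unknown is exactly what removes the troublesome purely-time-oscillating part $\varepsilon^{-1}\mathcal{M}_y(W)^\varepsilon$ from the potential.

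For the rate I would compare $z_\varepsilon$ with $w_\varepsilon:=u_0-\varepsilon\chi_3^\varepsilon u_0+\varepsilon^{k}\,\Xi^\varepsilon u_0$, where $u_0$ solves $(3.7)$ and $\Xi(\tau):=\int_0^\tau\bigl(\mathcal{M}_y(|\nabla_y\chi_3|^2)(s)-\mathcal{M}(|\nabla_y\chi_3|^2)\bigr)ds$ is $1$-periodic and depends only on $\tau$ (so $\Delta_x\Xi^\varepsilon=0$ and this term is harmless in the elliptic part); its only role is to absorb a $\partial_\tau$-exact, mean-zero-in-time remainder. Because $u_0|_{\partial\Omega}=0$ and the correctors enter multiplicatively, $w_\varepsilon|_{\partial\Omega}=0$ and $r_\varepsilon:=z_\varepsilon-w_\varepsilon\in L^2_TH^1_0(\Omega)$, so no boundary-layer corrector is needed. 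Substituting $w_\varepsilon$ into the $z_\varepsilon$-equation: the term $-\varepsilon^{-1}(W-\mathcal{M}_y(W))^\varepsilon u_0$ is cancelled by the leading part $\varepsilon\Delta_x(\chi_3^\varepsilon)u_0$ of $-\Delta(-\varepsilon\chi_3^\varepsilon u_0)$; using $(W-\mathcal{M}_y(W))\chi_3=(\Delta_y\chi_3)\chi_3$ and $\mathcal{M}_y((\Delta_y\chi_3)\chi_3)=-\mathcal{M}_y(|\nabla_y\chi_3|^2)$, the cross term $(W-\mathcal{M}_y(W))^\varepsilon\chi_3^\varepsilon u_0$ splits into a fast-$y$-mean-zero piece (rewritten $\varepsilon^2\Delta_x(\Phi^\varepsilon)u_0$ with $\nabla_y\Phi\in L^\infty$, costing $O(\varepsilon)$ after one integration by parts), its full mean $\mathcal{M}(\chi_3W)u_0$ (which cancels against $(3.7)$), and a mean-zero-in-$\tau$ piece $\varepsilon^k\partial_t(\Xi^\varepsilon)u_0$ (absorbed by $\partial_t(\varepsilon^k\Xi^\varepsilon u_0)$); the term $2(\nabla_y\chi_3)^\varepsilon\cdot\nabla u_0$ coming from $-\Delta(-\varepsilon\chi_3^\varepsilon u_0)$ is handled by a standard flux corrector and costs $O(\varepsilon)$. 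After these cancellations the residual is a finite sum of terms of the form $(\varepsilon\text{-power})\times(\text{fast derivative of a bounded periodic object})\times(\text{slow factor among }u_0,\nabla u_0,\nabla^2u_0,\partial_tu_0)$; parabolic regularity for $(3.7)$ on the $C^{1,1}$ domain $\Omega$ with $g\in H^1_0(\Omega)$, $f\in L^2(\Omega_T)$ gives $\nabla^2u_0,\partial_tu_0\in L^2(\Omega_T)$ with norm $\le C(\|f\|_{L^2(\Omega_T)}+\|g\|_{H^1(\Omega)})$, controlling the slow factors.

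I expect the main obstacle to be the term obtained by differentiating the corrector in time, $\partial_t(-\varepsilon\chi_3^\varepsilon u_0)\ni-\varepsilon\,\partial_t(\chi_3^\varepsilon)\,u_0=-\varepsilon^{1-k}(\partial_\tau\chi_3)^\varepsilon u_0$, which carries the \emph{negative} power $\varepsilon^{1-k}$. The resolution rests on two points. First, $\mathcal{M}_y(\partial_\tau\chi_3)=\partial_\tau\mathcal{M}_y(\chi_3)=0$, so one may solve $\Delta_y\psi=\partial_\tau\chi_3$ in $\mathbb{T}^d$ with $\mathcal{M}_y(\psi)=0$ and rewrite $-\varepsilon^{1-k}(\partial_\tau\chi_3)^\varepsilon u_0=-\varepsilon^{3-k}\Delta_x(\psi^\varepsilon)u_0$, which against $r_\varepsilon$ contributes only $\varepsilon^{2-k}\int(\nabla_y\psi)^\varepsilon\cdot\nabla(u_0 r_\varepsilon)$ — this is exactly where the hypothesis $\partial_\tau(W-\mathcal{M}_y(W))\in L^\infty(\mathbb{T}^{d+1})$ is needed (it is equivalent to $\partial_\tau\chi_3\in L^\infty$, hence to $\nabla_y\psi\in L^\infty$) and it produces the $\varepsilon^{2-k}$ branch. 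Second, since $\partial_t r_\varepsilon$ is not bounded in $L^2$, the $\partial_t$-type residuals — all of the shape $\varepsilon^k\partial_t(h^\varepsilon)u_0$ with $h$ $1$-periodic in $\tau$ and $x$-independent — cannot simply be integrated by parts in time against $r_\varepsilon$; the fix is to fold exactly these into the ansatz (the $\Xi$-corrector) and into the change of unknown (the weight $\phi^\varepsilon$), which costs nothing in the elliptic estimate precisely because such corrections have vanishing $\Delta_x$. Granting all this, testing the error equation against $r_\varepsilon$, absorbing gradients by the uniform Dirichlet bound, using $\|r_\varepsilon(0)\|_{L^2}\le C\varepsilon\|g\|_{L^2}$ and Grönwall, gives $\|r_\varepsilon\|_{L^\infty_TL^2_x}\le C\max(\varepsilon^{2-k},\varepsilon^{k-1})(\|f\|_{L^2(\Omega_T)}+\|g\|_{H^1(\Omega)})$; combined with $\|u_\varepsilon-z_\varepsilon\|_{L^\infty_TL^2_x}\le C\varepsilon^{k-1}(\cdots)$ and $\|w_\varepsilon-u_0\|_{L^\infty_TL^2_x}\le C\varepsilon(\cdots)$ this yields the claimed bound. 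Tracking the $k$-dependence through the $\tau$-primitives $\Theta,\Xi$ shows the implicit constant degenerates as $k\downarrow1$, in agreement with the last assertion of the theorem.
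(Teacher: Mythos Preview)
Your approach is correct and genuinely different from the paper's, and in one respect it is strictly sharper.

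\textbf{Comparison with the paper.} The paper also splits $W=W_3+W_4$ with $W_3=W-\mathcal{M}_y(W)$ and $W_4=\mathcal{M}_y(W)$, but handles the purely time–oscillating piece $\varepsilon^{-1}W_4^\varepsilon v_\varepsilon$ by an \emph{iterated} corrector scheme: it introduces $\chi_{3-1}(\tau)=\int_0^\tau W_4=\Theta(\tau)$ and then $\chi_{3-i}(\tau)=\int_0^\tau \chi_{3-(i-1)}W_4$, repeats the substitution $I_k$ times (the least integer with $I_k(k-1)\ge k$), and works with $\tilde v_\varepsilon=v_\varepsilon\bigl(1+\sum_{i=1}^{I_k}(-1)^i\varepsilon^{i(k-1)}\chi_{3-i}^\varepsilon\bigr)$. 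Since $W_4=\chi_{3-1}'$, one checks by induction that $\chi_{3-i}=\tfrac{1}{i!}\chi_{3-1}^{\,i}$, so the paper's modified unknown is exactly the $I_k$-th Taylor polynomial of your exponential weight: $\tilde v_\varepsilon=v_\varepsilon\sum_{i=0}^{I_k}\tfrac{(-\phi^\varepsilon)^i}{i!}$ versus your $z_\varepsilon=e^{-\phi^\varepsilon}u_\varepsilon$. Your resummation removes $W_4$ in one line and yields the uniform $L^2_TH^1_x$ bound with a constant depending only on $\|\Theta\|_{L^\infty}$, independent of $k$. The remainder of your argument (the $\chi_3$-ansatz, the $\psi$-trick for $\varepsilon^{1-k}(\partial_\tau\chi_3)^\varepsilon$ producing the $\varepsilon^{2-k}$ rate, and the $\tau$-only corrector $\Xi$) matches in spirit the paper's steps $(5.22)$–$(5.45)$, and in particular your identification of $-\varepsilon^{1-k}(\partial_\tau\chi_3)^\varepsilon u_0$ as the source of the $\varepsilon^{2-k}$ branch is exactly the paper's $(5.25)$.

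\textbf{One point to correct.} Your last sentence, that tracking constants through $\Theta,\Xi$ shows $C$ degenerates as $k\downarrow 1$, is not supported by your own method: $\|\phi^\varepsilon\|_{L^\infty}\le\|\Theta\|_{L^\infty}$ for all $k>1$ and $\varepsilon\le 1$, so $e^{\pm\phi^\varepsilon}$ and all your corrector bounds are uniform in $k\in(1,2)$. What degenerates is the \emph{rate} $\varepsilon^{k-1}\to 1$, not the constant. The blow-up of $C$ asserted in the theorem is an artifact of the paper's truncated-series bookkeeping (they bound $\|\chi_{3-i}\|_{L^\infty}\le\|W\|_{L^\infty}^i$ rather than $\|\Theta\|_{L^\infty}^i/i!$, and must take $I_k\sim(k-1)^{-1}$ terms); your exponential change of variable sidesteps this entirely. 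So either drop that final remark, or note explicitly that your argument actually improves the theorem on this point.
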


\begin{thm}
Assume that $0\leq k\leq 1$, $\gamma=1$, $W,\partial_\tau W\in L^\infty(\mathbb{T}^{d+1})$ with $\mathcal{M}_y(W)(\tau)=0$ for every $\tau\in \mathbb{T}$, $f\in L^2(\Omega_T)$ and $g\in H^1_0(\Omega)$, then for $0<k\leq 1$, the homogenized equation for $(1.1)$ is given by
\begin{equation}\left\{\begin{aligned}
\partial_t {u}_0-\Delta {u}_0-\mathcal{M}(\chi_{3} W)u_0&=f, \quad\quad\text{in }\Omega\times (0,T),\\
{u}_0&=0, \quad\quad\text{on }\partial\Omega\times (0,T),\\
{u}_0&=g, \quad\quad\text{on }\Omega\times \{t=0\}.
\end{aligned}\right.\end{equation}
with the same corrector $\chi_3(y,\tau)$ defined in $(3.8)$. Moreover, we have the following convergence rates estimates:
\begin{equation}||u_\varepsilon-u_0||_{L^\infty_TL^2_x}\leq C\varepsilon^k \left(||f||_{L^2(\Omega_T)}+||g||_{H^1(\Omega)}\right),\end{equation}
where the constant $C$ depends only on $W$, $d$, $\Omega$ and $T$. For $k=0$, the homogenized equation for $(1.1)$ is given by
\begin{equation}\left\{\begin{aligned}
\partial_t {u}_0-\Delta {u}_0-\mathcal{M}_y(\chi_{3} W)(t)\cdot u_0&=f, \quad\quad\text{in }\Omega\times (0,T),\\
{u}_0&=0, \quad\quad\text{on }\partial\Omega\times (0,T),\\
{u}_0&=g, \quad\quad\text{on }\Omega\times \{t=0\}
\end{aligned}\right.\end{equation}
and  we have the following convergence rates estimates:
\begin{equation}||u_\varepsilon-u_0||_{L^\infty_TL^2_x}\leq C\varepsilon \left(||f||_{L^2(\Omega_T)}+||g||_{H^1(\Omega)}\right),\end{equation}
where the constant $C$ depends only on $W$, $d$, $\Omega$ and $T$.
\end{thm}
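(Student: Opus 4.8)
The plan is to run the standard energy-method argument for homogenization with a corrector, but carried out with explicit tracking of the $\varepsilon$-powers appearing in the error, in the spirit of the earlier theorems. First I would set up the two-scale ansatz suggested by Case 6: define the approximate solution
\[
w_\varepsilon(x,t) = u_0(x,t) + \varepsilon\,\chi_3\!\left(\tfrac{x}{\varepsilon},\tfrac{t}{\varepsilon^k}\right) u_0(x,t),
\]
where $u_0$ solves the homogenized equation (3.11) (resp. (3.12) when $k=0$) and $\chi_3$ is the corrector of (3.8). Since $\mathcal{M}_y(W)(\tau)=0$ for every $\tau$, the corrector $\chi_3$ is well defined, and because $\chi_3$ solves only a $y$-elliptic problem (with $\tau$ a parameter), the regularity hypotheses $W,\partial_\tau W\in L^\infty$ give $\chi_3,\partial_\tau\chi_3\in L^\infty(\mathbb{T}^{d+1})$ together with the needed $y$-derivatives. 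I would first record parabolic a priori bounds: the uniform estimate $\|u_\varepsilon\|_{L^\infty_T L^2_x} + \|u_\varepsilon\|_{L^2_T H^1_x} \le C(\|f\|_{L^2(\Omega_T)}+\|g\|_{H^1(\Omega)})$ (valid here because $\gamma=1$ and the oscillating potential, when integrated against a test function, is controlled via the corrector), and regularity of $u_0$: since $g\in H^1_0(\Omega)$, $f\in L^2(\Omega_T)$ and $\Omega$ is $C^{1,1}$, we have $u_0\in L^2_T(H^2\cap H^1_0)$, $\partial_t u_0\in L^2(\Omega_T)$, with norms controlled by the data.

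Next I would plug $w_\varepsilon$ into the operator $\partial_t-\Delta-\varepsilon^{-1}W^\varepsilon$ and compute the residual $R_\varepsilon := (\partial_t-\Delta-\varepsilon^{-1}W^\varepsilon)w_\varepsilon - f$. By construction the $\varepsilon^{-1}$-order terms cancel: $-\Delta_y\chi_3 = W$ modulo its $y$-mean, and the mean-zero condition $\mathcal{M}_y(W)=0$ kills the leftover; the $\varepsilon^0$-order terms produce exactly the effective potential $-\mathcal{M}(\chi_3 W)u_0$ (resp. $-\mathcal{M}_y(\chi_3W)(t)u_0$ when $k=0$, because then $\tau=t$ is not fast), so those cancel against the homogenized equation. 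What remains in $R_\varepsilon$ are genuinely small terms: (i) terms $\varepsilon\,\chi_3\,\partial_t u_0$, $\varepsilon\,\nabla_y\chi_3\cdot\nabla_x u_0$-type cross terms, and $\varepsilon\,\chi_3\Delta_x u_0$, all of size $O(\varepsilon)$ in $L^2(\Omega_T)$ by the a priori regularity of $u_0$; (ii) crucially, the term coming from $\partial_t$ hitting the fast time variable of $\chi_3$, namely $\varepsilon^{1-k}(\partial_\tau\chi_3)^\varepsilon u_0$, which is of size $O(\varepsilon^{1-k})$ — this is the term that is \emph{not} automatically negligible and that, for $0<k\le1$, scales like $\varepsilon^k$ only after we also account for a boundary-layer correction. (When $k=0$ this term is simply $\varepsilon\,(\partial_\tau\chi_3)^\varepsilon u_0 = \varepsilon\,(\partial_t\chi_3)(x/\varepsilon,t)\,u_0$, of order $\varepsilon$, matching (3.13).)

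The boundary-layer issue is the remaining obstacle: $w_\varepsilon$ does not vanish on $\partial\Omega\times(0,T)$ because of the corrector term $\varepsilon\chi_3^\varepsilon u_0$, which is only $O(\varepsilon)$ on the boundary but whose gradient is $O(1)$ there. I would handle this by subtracting a cutoff: replace $\chi_3^\varepsilon u_0$ by $\eta_\varepsilon\chi_3^\varepsilon u_0$ with $\eta_\varepsilon$ a smooth cutoff equal to $1$ outside an $\varepsilon$-neighborhood of $\partial\Omega$ and $0$ on $\partial\Omega$, or equivalently add a correction $\theta_\varepsilon$ solving a parabolic problem with the boundary defect as data; the standard estimate $\|\eta_\varepsilon - 1\|$ is supported in a layer of width $\varepsilon$ and $\|\nabla\eta_\varepsilon\|_{L^\infty}\le C\varepsilon^{-1}$, and since $u_0\in L^2_T H^2_x\hookrightarrow$ suitable trace spaces, the resulting extra error is again $O(\varepsilon)$ (using $C^{1,1}$ regularity of $\Omega$ and the Hardy/trace inequality near $\partial\Omega$). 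Then I set $z_\varepsilon := v_\varepsilon - e^{-Nt}w_\varepsilon$ (working with the damped variable $v_\varepsilon$ of (1.2) to absorb the zeroth-order term), which satisfies a parabolic equation with zero initial data, zero boundary data, and right-hand side $e^{-Nt}R_\varepsilon$ plus the boundary-layer contributions. A standard energy estimate — multiply by $z_\varepsilon$, integrate, use Gronwall — yields
\[
\|z_\varepsilon\|_{L^\infty_T L^2_x} + \|\nabla z_\varepsilon\|_{L^2(\Omega_T)} \le C\big(\|R_\varepsilon\|_{L^2(\Omega_T)} + \text{b.l. terms}\big) \le C\,\varepsilon^{k}\big(\|f\|_{L^2(\Omega_T)}+\|g\|_{H^1(\Omega)}\big),
\]
and since $\|u_\varepsilon - u_0\|_{L^\infty_T L^2_x} \le \|z_\varepsilon\|_{L^\infty_T L^2_x} + \|\varepsilon\chi_3^\varepsilon u_0\|_{L^\infty_T L^2_x} \le \|z_\varepsilon\|_{L^\infty_T L^2_x} + C\varepsilon\|u_0\|_{L^\infty_T L^2_x}$, the estimate (3.12) follows for $0<k\le1$, and (3.13) for $k=0$ (where no fast time variable is present, so the boundary layer and residual are both genuinely $O(\varepsilon)$). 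The main obstacle, as flagged in the introduction, is obtaining the correct $\varepsilon$-power for the term $\varepsilon^{1-k}(\partial_\tau\chi_3)^\varepsilon u_0$: one does not estimate it crudely in $L^2(\Omega_T)$ (which would only give $\varepsilon^{1-k}$, worse than $\varepsilon^k$ when $k<1/2$), but rather writes $\partial_\tau\chi_3 = \Delta_y(\text{something})$ — here using that $\chi_3$ depends on $\tau$ only through $W$ and that one can introduce a secondary corrector solving $\Delta_y\Theta = \partial_\tau\chi_3$ (solvable since $\mathcal{M}_y(\partial_\tau\chi_3)=\partial_\tau\mathcal{M}_y(\chi_3)=0$) — and then integrates by parts in $x$ when testing against $z_\varepsilon$, trading the bad $\varepsilon^{1-k}$ factor for $\varepsilon^{1-k}\cdot\varepsilon = \varepsilon^{2-k}$ on one factor and picking up $\nabla z_\varepsilon$ (absorbed by the left side) on the other; balancing $\varepsilon^{2-k}$ against the $O(\varepsilon)$ terms and re-optimizing gives the stated rate $\varepsilon^k$ for $k\le1$.
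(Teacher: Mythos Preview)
Your overall framework — first–order corrector ansatz, compute the residual, energy estimate on the difference — matches the paper's, but there is a genuine gap in your residual analysis. You assert that ``the $\varepsilon^{0}$-order terms produce exactly the effective potential $-\mathcal{M}(\chi_3 W)u_0$ \ldots\ so those cancel against the homogenized equation.'' This is false pointwise: what actually appears at order $\varepsilon^{0}$ is (up to signs) $\chi_3^\varepsilon W^\varepsilon u_0 + 2|\nabla_y\chi_3^\varepsilon|^{2}u_0$-type terms, which only converge \emph{weakly} to the constant $\mathcal{M}(\chi_3 W)u_0$. The difference is an $O(1)$ oscillating function, and it is precisely this difference that determines the rate. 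The paper splits it as
\[
\bigl(2|\nabla_y\chi_3|^{2}+\chi_3 W+\mathcal{M}(\chi_3 W)\bigr)
=\underbrace{\bigl(2|\nabla_y\chi_3|^{2}+\chi_3 W+\mathcal{M}_y(\chi_3 W)(\tau)\bigr)}_{\text{$y$-mean zero at each $\tau$}}
+\underbrace{\bigl(\mathcal{M}(\chi_3 W)-\mathcal{M}_y(\chi_3 W)(\tau)\bigr)}_{\text{function of $\tau$ only, $\tau$-mean zero}}.
\]
The first piece is handled by an auxiliary $y$-elliptic corrector and contributes $O(\varepsilon)$; the second is a purely time-oscillating term at frequency $\varepsilon^{-k}$, and the only way to gain smallness is to take a $\tau$-antiderivative (the paper's $\widehat{\chi_{3-4}}$) and integrate by parts in $t$, which produces exactly the factor $\varepsilon^{k}$. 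This is the real bottleneck for $0<k\le 1$, and your proposal never touches it. Your term $\varepsilon^{1-k}(\partial_\tau\chi_3)^\varepsilon u_0$ is genuine but, after your own secondary corrector $\Delta_y\Theta=\partial_\tau\chi_3$, it is $O(\varepsilon^{2-k})$, which for $k\le 1$ is \emph{smaller} than $\varepsilon^{k}$; so it is not the limiting term, and your final sentence about ``balancing $\varepsilon^{2-k}$ against the $O(\varepsilon)$ terms and re-optimizing gives $\varepsilon^{k}$'' is not a valid argument (it would yield $O(\varepsilon)$, not $O(\varepsilon^{k})$).

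A secondary point: the paper applies the corrector to $v_\varepsilon$, setting $w_\varepsilon=v_\varepsilon+\varepsilon\chi_3^\varepsilon v_\varepsilon-v_0$, rather than to $u_0$. Since $v_\varepsilon=0$ on $\partial\Omega$, this choice makes $w_\varepsilon$ vanish on the lateral boundary automatically, and your entire boundary-layer/cutoff discussion is then unnecessary. If you insist on correcting $u_0$, the cutoff argument can be made to work, but it is an avoidable complication and does not help with the missing $\varepsilon^{k}$ term above. For $k=0$ there is no fast time variable, the homogenized potential is $\mathcal{M}_y(\chi_3 W)(t)$, the second piece of the splitting disappears, and the rate is $O(\varepsilon)$ — here your sketch is essentially correct.
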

\begin{rmk}
In Theorem 3.4, compared with the conditions satisfied by the potential $W$ in Theorem 3.3, we need a more stronger conditions: $W\in L^\infty(\mathbb{T}^{d+1})$ with $\mathcal{M}_y(W)(\tau)=0$ for every $\tau\in \mathbb{T}$, since the singular term $\varepsilon^{-1}\mathcal{M}_y(W)(\tau) u_\varepsilon$ is too bad for $0\leq k\leq 1$ and we can not even have a uniform estimate for $u_\varepsilon$, which is solution to the following parabolic equation
\begin{equation*}\left\{\begin{aligned}
\partial_t u_\varepsilon-\Delta u_\varepsilon -\varepsilon ^{-1} \mathcal{M}_y(W)\left( t /{\varepsilon^{k}}\right)u_\varepsilon&=f, \quad\quad\text{in }\Omega\times (0,T),\\
u_\varepsilon&=0, \quad\quad\text{on }\partial\Omega\times (0,T),\\
u_\varepsilon&=g, \quad\quad\text{on }\Omega\times \{t=0\},
\end{aligned}\right.\end{equation*}
if $\mathcal{M}(W)=0$ with $\mathcal{M}_y(W)(\tau)\not\equiv 0$, which is congruent to that the constant $C$ in Theorem 3.3 will blow up as $k\rightarrow 1$.
\end{rmk}

\begin{thm}
Assume that $3\geq k>2$, $\gamma=k-1$, $W\in L^\infty(\mathbb{T}^{d+1})$, $\nabla_yW,\nabla ^2_yW,\nabla ^3_yW\in L^\infty(\mathbb{T}^{d+1})$ with $\mathcal{M}_\tau(W)(y)=0$ for every $y\in \mathbb{T}^d$, $f\in L^2(\Omega_T)$ and $g\in H^1_0(\Omega)$, then the homogenized equation for $(1.1)$ is given by
\begin{equation}\left\{\begin{aligned}
\partial_t u_0-\Delta u_0+\mathcal{M}(\nabla_y {\chi_4}\nabla_yW) u_0&=f, \quad\quad\text{in }\Omega\times (0,T),\\
u_0&=0, \quad\quad\text{on }\partial\Omega\times (0,T),\\
u_0&=g, \quad\quad\text{on }\Omega\times \{t=0\}.
\end{aligned}\right.\end{equation}
where the corrector $\chi_4(y,\tau)$ is defined as
\begin{equation}
\chi_4(y,\tau)=:\int_0^\tau \tilde{\chi_5}(y,s)ds,
\end{equation}
where
\begin{equation}\tilde{\chi_5}(y,\tau)=:{\chi_5}(y,\tau)-\mathcal{M}_\tau(\chi_5)(y),\end{equation}
with
\begin{equation}{\chi_5}(y,\tau)=:\int_0^\tau W(y,s)ds.\end{equation}
Moreover, we have the following convergence rates estimates:
\begin{equation}||u_\varepsilon-u_0||_{L^\infty_TL^2_x}\leq C \varepsilon^{k-2}\left(||f||_{L^2(\Omega_T)}+||g||_{H^1(\Omega)}\right),\end{equation}
where the constant $C$ depends only on $W$, $d$, $\Omega$ and $T$.
\end{thm}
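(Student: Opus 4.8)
The plan is to run a standard homogenization argument through the ansatz of Case 1, but carried out to sufficiently high order in $\varepsilon$ so that the correction terms can be estimated in $L^\infty_T L^2_x$. Recall that here $k_1=1$, $k_2=k\in(2,3]$, $\gamma=k-1$, and $\mathcal{M}_\tau(W)(y)=0$ for every $y$, which is exactly what makes $\chi_5(y,\tau)=\int_0^\tau W(y,s)\,ds$ periodic in $\tau$. First I would set up the approximate solution
\begin{equation*}
w_\varepsilon(x,t)=u_0(x,t)+\varepsilon^{2k-2-?}\cdots
\end{equation*}
—more precisely, guided by the two-scale expansion $u_\varepsilon\approx u_0+\varepsilon^{?}\,(\text{correctors in }y,\tau)$, with the dominant corrector being $\varepsilon^{k}\,\tilde\chi_5(x/\varepsilon,t/\varepsilon^k)\,u_0$ coming from matching the $\varepsilon^{-\gamma}W u_\varepsilon$ term against $\partial_\tau$, and a second corrector built from $\chi_4$ defined in (3.17)–(3.19) absorbing the next order. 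The exact powers of $\varepsilon$ attached to each corrector are dictated by the relation $\gamma=k-1$: since $\partial_\tau$ acting on a function of $t/\varepsilon^k$ produces $\varepsilon^{-k}$, and the potential carries $\varepsilon^{-(k-1)}$, one needs correctors at level $\varepsilon^{k}$ and then $\varepsilon^{2k-1}$, with $2k-1\ge k$ precisely because $k\ge 1$. I would then define $R_\varepsilon=u_\varepsilon-w_\varepsilon$ and compute the equation it solves.

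The core of the argument is the energy estimate for $R_\varepsilon$. Writing the parabolic operator $\mathcal{L}_\varepsilon=\partial_t-\Delta-\varepsilon^{-(k-1)}W^\varepsilon$, I would show $\mathcal{L}_\varepsilon R_\varepsilon = F_\varepsilon$ in $\Omega_T$, $R_\varepsilon=0$ on $\partial\Omega\times(0,T)$, and $R_\varepsilon(\cdot,0)=$ (something $O(\varepsilon)$ in $L^2$, coming from the correctors not vanishing at $t=0$; one absorbs this by the usual boundary-layer or by noting $g\in H_0^1$). The remainder $F_\varepsilon$ consists of: (i) terms where $\Delta_x$ hits a corrector, producing factors $\varepsilon^{k}\nabla^2 u_0$ etc.; (ii) terms where $\partial_t$ hits a corrector, producing $\varepsilon^{k}\partial_t u_0$; (iii) cross terms $\operatorname{div}$-type terms; and crucially (iv) the term $\varepsilon^{-(k-1)}W^\varepsilon$ times the corrector, which must telescope against $\partial_t$ of the next corrector — this is where the definitions $\chi_5$, $\tilde\chi_5$, $\chi_4$ and the effective potential $\mathcal{M}(\nabla_y\chi_4\nabla_y W)$ enter, via an integration-by-parts identity
\begin{equation*}
\int_{\mathbb{T}^{d+1}} W\,\tilde\chi_5\,dy\,d\tau=-\int_{\mathbb{T}^{d+1}}\nabla_y\chi_4\cdot\nabla_y W\,dy\,d\tau
\end{equation*}
after using $W=\partial_\tau\chi_5$ and $\chi_4=\int_0^\tau\tilde\chi_5$. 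The hypotheses $\nabla_yW,\nabla_y^2W,\nabla_y^3W\in L^\infty$ are exactly what is needed to control $\nabla_x$-derivatives of the correctors (which involve $\nabla_y$ of $\chi_5,\chi_4$) and to make sense of the $\Delta_y$ terms. Because $u_0$ solves a nice heat-type equation with bounded potential and $f\in L^2$, $g\in H_0^1$, one has $\|u_0\|_{L^2_TH^2_x}+\|\partial_t u_0\|_{L^2(\Omega_T)}\le C(\|f\|_{L^2(\Omega_T)}+\|g\|_{H^1(\Omega)})$, so each piece of $F_\varepsilon$ is $O(\varepsilon^{k-2})$ in the appropriate norm (the worst piece being the one where a single $\nabla_x$ lands on a corrector of size $\varepsilon^{k}$, but paired against the singular $\varepsilon^{-(k-1)}$ it still leaves $\varepsilon^{k-(k-1)}=\varepsilon$, while the genuinely $\varepsilon^{k-2}$-sized piece comes from $\Delta_x$ hitting the $\varepsilon^{k}$-corrector: $\varepsilon^{k}\cdot\varepsilon^{-2}\cdot$(something $O(1)$ wait — rather it is $\varepsilon^{k-2}$ times $\|\nabla^2 u_0\|$)). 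Testing the equation for $R_\varepsilon$ against $R_\varepsilon$ itself, using the uniform-in-$\varepsilon$ coercivity coming from the substitution $v_\varepsilon=e^{-Nt}u_\varepsilon$ of equation (1.2) (which is legitimate here because the relevant a priori $L^2_TH^1_x$ bound for $u_\varepsilon$ has been established — this is Theorem's hidden prerequisite), and Grönwall, yields $\|R_\varepsilon\|_{L^\infty_TL^2_x}\le C\varepsilon^{k-2}(\|f\|_{L^2(\Omega_T)}+\|g\|_{H^1(\Omega)})$.

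The main obstacle — and the reason this case is singled out as "more singular" in the introduction — is precisely item (iv) combined with the bookkeeping of $\varepsilon$-powers: one must choose the correctors so that after multiplying by the singular prefactor $\varepsilon^{-(k-1)}$ no term worse than $\varepsilon^{k-2}$ survives, and in particular the naive estimate $\|\partial_t u_\varepsilon\|_{L^2}+\|\nabla^2 u_\varepsilon\|_{L^2}\le C\varepsilon^{-(k-1)}$ must be upgraded. The way around this is to note that the approximate solution $w_\varepsilon$ already captures the oscillatory part, so that $\nabla^2 R_\varepsilon$ and $\partial_t R_\varepsilon$ need never be estimated individually — only the energy quantity $\|R_\varepsilon\|_{L^\infty_TL^2_x}+\|\nabla R_\varepsilon\|_{L^2(\Omega_T)}$ is controlled, and the bad $\varepsilon$-powers are confined to the explicit correctors whose $x$- and $t$-derivatives are computed by hand. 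A secondary technical point is the initial layer: the corrector $\varepsilon^k\tilde\chi_5^\varepsilon u_0$ does not vanish at $t=0$, contributing an $O(\varepsilon^k)=O(\varepsilon^{k-2}\cdot\varepsilon^2)$ error in $L^2$ at $t=0$, which is harmless; but if one includes the second corrector one should check it too is harmless, or subtract a small correction to $u_0$'s initial data, exactly as in the parabolic-homogenization literature cited (e.g. \cite{MR3838419}).
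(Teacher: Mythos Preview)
Your overall strategy is different from the paper's. You propose the classical additive ansatz $w_\varepsilon = u_0 + (\text{correctors})\cdot u_0$ and an energy estimate for $R_\varepsilon = u_\varepsilon - w_\varepsilon$. The paper instead corrects $v_\varepsilon$ \emph{multiplicatively}: it sets $w_\varepsilon = (1 - \varepsilon\tilde\chi_5^\varepsilon + \varepsilon^{2}\chi_7^\varepsilon - \varepsilon^{k-1}\Delta_y\chi_4^\varepsilon)v_\varepsilon - v_0$, with $\chi_7 = \int_0^\tau W\tilde\chi_5$, and repeatedly substitutes the equation for $\partial_t v_\varepsilon$ back in. The paper's route requires the improved Hessian bound $\|\nabla^2 u_\varepsilon\|_{L^2(\Omega_T)}\le C\varepsilon^{-1}$ (its Lemma~6.2), since terms like $\varepsilon^{k-1}\Delta_y\chi_4^\varepsilon\,\Delta v_\varepsilon$ must be shown to be $O(\varepsilon^{k-2})$. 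Your additive approach, if carried out correctly, would only need $H^2$-regularity of $u_0$, which is cheaper; so the two routes trade off different ingredients.

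That said, your proposal has two concrete gaps. First, the $\varepsilon$-powers are wrong: the leading corrector sits at level $\varepsilon$, not $\varepsilon^k$. If the corrector is $\varepsilon^\alpha\tilde\chi_5^\varepsilon u_0$, then $\partial_t$ produces $\varepsilon^{\alpha-k}W^\varepsilon u_0$, and matching $\varepsilon^{-(k-1)}W^\varepsilon u_0$ forces $\alpha = 1$. With $\alpha=1$, the Laplacian acting on the corrector yields a new singular term $\varepsilon^{-1}\Delta_y\tilde\chi_5^\varepsilon\,u_0$, which is why a further corrector $\varepsilon^{k-1}\Delta_y\chi_4^\varepsilon u_0$ (not $\varepsilon^{2k-1}$) is needed; similarly the potential acting on the first corrector forces an $\varepsilon^2\chi_7^\varepsilon u_0$ term. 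These are exactly the prefactors appearing in the paper's multiplicative correction, and without them your remainder $F_\varepsilon$ will not be $O(\varepsilon^{k-2})$. Second, and more seriously, you dismiss the coercivity of $\mathcal{L}_\varepsilon$ as a ``hidden prerequisite'' handled by $v_\varepsilon = e^{-Nt}u_\varepsilon$. This is not enough: the term $\int_{\Omega_t}\varepsilon^{-(k-1)}W^\varepsilon R_\varepsilon^2$ in the energy identity is genuinely singular, and no choice of $N$ absorbs it. The paper's Lemma~6.1 handles the analogous term for $v_\varepsilon$ by writing $\varepsilon^{-(k-1)}W^\varepsilon = \varepsilon\,\partial_t\chi_5^\varepsilon$, integrating by parts in $t$, substituting the equation for $\partial_t v_\varepsilon$, and iterating once more with $\chi_{6-1}=\int_0^\tau W\chi_5$ (using $\mathcal M_\tau(W\chi_5)=\tfrac12(\mathcal M_\tau W)^2=0$). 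You must run the same argument with $R_\varepsilon$ in place of $v_\varepsilon$ to close your energy estimate; this is the step your proposal is missing, and it is precisely where the hypothesis $\mathcal M_\tau(W)(y)=0$ is indispensable.
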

\begin{rmk}
1: In general, $\mathcal{M}(\nabla_y {\chi_4}\nabla_yW)\neq 0$. For example, we choose $W(y,\tau)=\sin(2\pi \tau)V(y)$ for any non-constant periodic function $V\in C^3(\mathbb{T}^d)$. Then, the $W$ we define satisfies the assumptions in Theorem 3.6. And it is easy to see that
 $$\tilde{\chi_5}(y,\tau)=-\frac 1{2\pi}\cos(2\pi \tau)V(y)\text{ and }\chi_4(y,\tau)=-\frac 1{4\pi^2}sin(2\pi \tau)V(y).$$
 Therefore, $$\mathcal{M}(\nabla_y {\chi_4}\nabla_yW)=-\frac1{4\pi^2}\int_\mathbb{T}\sin^2(2\pi \tau)d\tau\cdot\int_{\mathbb{T}^d}|\nabla_y V|^2dy\neq 0.$$

2: Similar to the explanation in Remark 3.5, the singular term $\varepsilon^{1-k}(W-\mathcal{M}_y(W)(\tau))u_\varepsilon$ is too bad for $2<k\leq 3$ if we only we $\mathcal{M}(W)=0$ with $\mathcal{M}_\tau(W)(y)\not\equiv 0$.

3: If $\infty>k>3$, $\mathcal{M}_\tau(W)(y)=0$ for every $y\in \mathbb{T}^d$ with suitable smoothing conditions satisfied by the potential $W$, then for any positive constant  $\gamma\in (0,\infty)$, the effective equation for $(1.1)$ is the heat equation with a trivial potential, which implies that there is no correct value for $\gamma$, such that the limiting effect of the highly oscillating term is nontrivial. To see this homogenization result, one may consult the proof of Theorem 3.3 (especially, $(5.6)$-$(5.13)$ in Section 5) for an elementary comprehension.
\end{rmk}

\begin{rmk}
The convergence rates obtained in Theorems 3.1-3.3 are congruent to the rates given by the two-scale expansions. Actually, in the process of two-scale expansions, we need to match and cancel out the terms of every negative order of $\varepsilon$, therefore, the rates given by the two-scale expansions should be the smallest positive order of $\varepsilon$, see \cite{MR2839402} more details. Moreover, under some better conditions satisfied by the potential $W$, we can eliminate the influence from the other part. For example, at a first glance, the rates given by the two-scale expansions in Theorem 3.6 should be $O\left(\max(\varepsilon^{k-2},\varepsilon^{3-k})\right)$. However, in view of $(6.44)$ and the content after $(6.44)$, the condition $\mathcal{M}_\tau(W)(y)=0$ for every $y\in \mathbb{T}^d$ would eliminate the influence from the terms of size $O(\varepsilon^{3-k})$ for $2<k<3$. Consequently, we come to the final convergence rates $(3.18)$.
\end{rmk}

\begin{rmk}
In view of the convergence rates obtained in \cite{MR4072212} for the following parabolic operator with  non-self-similar scales:
$$\partial_t-\operatorname{div}\left(A(x/\varepsilon,t/\varepsilon^k)\right).$$
Then it is an interesting problem to consider this parabolic operator considered in \cite{MR4072212} with time-varying potentials (even with different scales compared to the leading coefficient $A$), which may be left for further.
\end{rmk}
\section{Proofs of Theorem 3.1 and Theorem 3.2}
Before we prove Theorem 3.1 and Theorem 3.2, we first introduce some useful results.
\begin{lemma}Let $0<T<\infty$ be a positive constant and  $\Omega$ be a bounded $C^{1,1}$ domain in $\mathbb{R}^d$ with $d\geq 2$. Let $u(x,t)\in L^2(0,T;H_0^1(\Omega))$ be the weak solution to the following parabolic equation:
\begin{equation*}\left\{\begin{aligned}
\partial_t u-\Delta u&=f, \quad\quad\text{in }\Omega\times (0,T),\\
u&=0, \quad\quad\text{on }\partial\Omega\times (0,T),\\
u&=g, \quad\quad\text{on }\Omega\times \{t=0\},
\end{aligned}\right.\end{equation*}
with $f\in L^2(0,T;H^{-1}(\Omega))$ and $g\in L^2(\Omega)$, then
$$||u||_{L^\infty_TL^2_x}+||\nabla u||_{L^2(\Omega_T)}\leq C\left(||g||_{L^2(\Omega)}+||f||_{L^2(0,T;H^{-1}(\Omega))}\right),$$
with $C$ being a universal positive constant. Moreover, if $f\in L^2(\Omega_T)$ and $g\in H^1_0(\Omega)$, then
$$||\nabla u||_{L^\infty_TL^2_x}+||\nabla^2 u||_{L^2(\Omega_T)}+||\partial_t u||_{L^2(\Omega_T)}\leq C\left(||g||_{H^1(\Omega)}+||f||_{L^2(\Omega_T)}\right),$$
with the positive constant $C$ depending only on $d$, $T$ and $\partial \Omega$.
\end{lemma}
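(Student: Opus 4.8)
The plan is to establish the two estimates by the standard energy method for parabolic equations, testing the equation against suitable multipliers and using Gronwall's inequality. For the first estimate, I would multiply the equation $\partial_t u - \Delta u = f$ by $u$ and integrate over $\Omega$. Using the identity $\int_\Omega (\partial_t u)\, u\, dx = \tfrac12 \tfrac{d}{dt}\|u\|_{L^2(\Omega)}^2$, integration by parts on the Laplacian term (the boundary term vanishes since $u = 0$ on $\partial\Omega$), and the duality pairing $|\langle f, u\rangle| \le \|f\|_{H^{-1}(\Omega)} \|\nabla u\|_{L^2(\Omega)}$ followed by Young's inequality to absorb $\|\nabla u\|_{L^2}$ into the left-hand side, one obtains
\begin{equation*}
\frac{d}{dt}\|u(\cdot,t)\|_{L^2(\Omega)}^2 + \|\nabla u(\cdot,t)\|_{L^2(\Omega)}^2 \le C\|f(\cdot,t)\|_{H^{-1}(\Omega)}^2.
\end{equation*}
Integrating in $t$ from $0$ to any $s \le T$, using $u(\cdot,0) = g$, and then taking the supremum over $s$ for the $L^\infty_T L^2_x$-part and integrating over $(0,T)$ for the $\|\nabla u\|_{L^2(\Omega_T)}$-part gives the first inequality with a universal constant.

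For the second estimate, I would test against $-\Delta u$ (equivalently, differentiate the energy of $\nabla u$). Multiplying $\partial_t u - \Delta u = f$ by $-\Delta u$ and integrating over $\Omega$ yields $\tfrac12\tfrac{d}{dt}\|\nabla u\|_{L^2(\Omega)}^2 + \|\Delta u\|_{L^2(\Omega)}^2 = -\int_\Omega f\, \Delta u\, dx \le \tfrac12\|f\|_{L^2(\Omega)}^2 + \tfrac12\|\Delta u\|_{L^2(\Omega)}^2$, so that after absorbing and integrating in time (using $\nabla u(\cdot,0) = \nabla g \in L^2$) one controls $\|\nabla u\|_{L^\infty_T L^2_x}$ and $\|\Delta u\|_{L^2(\Omega_T)}$. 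Since $\Omega$ is a bounded $C^{1,1}$ domain, elliptic $H^2$-regularity gives $\|\nabla^2 u(\cdot,t)\|_{L^2(\Omega)} \le C(\|\Delta u(\cdot,t)\|_{L^2(\Omega)} + \|\nabla u(\cdot,t)\|_{L^2(\Omega)})$ with $C = C(d,\partial\Omega)$, which upgrades the bound on $\|\Delta u\|_{L^2(\Omega_T)}$ to the full Hessian bound. Finally, $\|\partial_t u\|_{L^2(\Omega_T)} \le \|\Delta u\|_{L^2(\Omega_T)} + \|f\|_{L^2(\Omega_T)}$ directly from the equation.

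The one point requiring a little care is the justification of the formal computations: a priori $u$ is only a weak solution in $L^2_T H^1_x$, so $-\Delta u$ is not an admissible test function at this regularity level, and $\tfrac{d}{dt}\|\nabla u\|_{L^2}^2$ need not exist pointwise. The standard remedy, which I would invoke rather than carry out in full, is to work with a Galerkin approximation (or, equivalently, with the difference quotients in time / a parabolic regularization), derive the estimates for the smooth approximants where all manipulations are legitimate, and pass to the limit using weak lower semicontinuity of the norms; the constants produced are uniform and depend only on the stated data. This is entirely classical — see e.g. the references on parabolic theory cited in the introduction — so I expect no genuine obstacle here, only the bookkeeping of the approximation argument. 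I would therefore present the energy identities as the heart of the proof and remark briefly that they are made rigorous by approximation.
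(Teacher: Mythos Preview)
Your proposal is correct and follows the standard energy-method argument for parabolic regularity. The paper itself does not give a proof of this lemma at all: it simply states that ``these estimates are classical in parabolic theory'' and refers to Friedman and Lady\v{z}enskaja--Solonnikov--Ural'ceva for the details, so your sketch is in fact more explicit than what appears in the paper.
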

\begin{proof}These estimates are classical in parabolic theory, we omit it for simplicity and refer to \cite{MR0181836,MR0241822} for the details.\end{proof}

\begin{lemma}
Let $k\in (0,\infty)$ be a positive constant and  $h_m=h_m(y,\tau)\in L^p(\mathbb{T}^{d+1})$ with $1<p<\infty$. Suppose that $||h_m||_{L^p(\mathbb{T}^{d+1})}\leq C$, and $\int_{\mathbb{T}^{d+1}}h_m\rightarrow \widehat{M_0}$ as $m\rightarrow\infty$. Then, for any $\varepsilon_m\rightarrow 0$, we have $h_m(x/\varepsilon_m,t/\varepsilon_m^k)\rightharpoonup \widehat{M_0}$ weakly in $L^p(Q)$ for and bounded domain $Q$ in $\mathbb{R}^{d+1}$. Moreover, if $p=\infty$, then $h_m(x/\varepsilon_m,t/\varepsilon_m^k)\rightharpoonup \widehat{M_0}$ weakly in $L^q(Q)$ for any $1<q<\infty$ and $h_m(x/\varepsilon_m,t/\varepsilon_m^k)\rightharpoonup \widehat{M_0}$ in $L^\infty(Q)$ weak star.
\end{lemma}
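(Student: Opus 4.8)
The plan is to run the classical averaging argument for oscillating coefficients—partition $Q$ into translates of the (anisotropic) period cell and use that the integral of a $1$-periodic function over any full cell equals its mean—while keeping careful track of the two features that distinguish the present statement from the textbook version: the non--self--similar scaling $(x/\varepsilon_m,t/\varepsilon_m^k)$ and the fact that $h_m$ itself varies with $m$. First I would reduce to the centred case: writing $h_m=g_m+\mathcal{M}(h_m)$ with $g_m:=h_m-\mathcal{M}(h_m)$, the constant $\mathcal{M}(h_m)=\int_{\mathbb{T}^{d+1}}h_m$ converges to $\widehat{M_0}$ by hypothesis, so it suffices to show $g_m(x/\varepsilon_m,t/\varepsilon_m^k)\rightharpoonup 0$, where now $\|g_m\|_{L^p(\mathbb{T}^{d+1})}\le 2C$ and $\mathcal{M}(g_m)=0$. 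For brevity write $g_m^{\varepsilon_m}(x,t):=g_m(x/\varepsilon_m,t/\varepsilon_m^k)$.

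Next, for a fixed bounded domain $Q$ I would establish the uniform bound $\|g_m^{\varepsilon_m}\|_{L^p(Q)}\le C_Q\|g_m\|_{L^p(\mathbb{T}^{d+1})}$, valid for all $m$ with $\varepsilon_m\le 1$: enclose $Q$ in a cube $[-L,L]^{d+1}$, change variables $y=x/\varepsilon_m$, $\tau=t/\varepsilon_m^k$, and cover the resulting region by at most $C_L\,\varepsilon_m^{-(d+k)}$ unit cubes, on each of which $\int|g_m|^p=\|g_m\|_{L^p(\mathbb{T}^{d+1})}^p$ by $1$-periodicity; the Jacobian $\varepsilon_m^{d+k}$ exactly cancels the count. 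In particular the rescaled sequence is bounded in $L^p(Q)$, so by density of $C_c^\infty(Q)$ in $L^{p'}(Q)$ (which holds since $p'<\infty$) it is enough to check $\int_Q g_m^{\varepsilon_m}\,\phi\,dx\,dt\to 0$ for every $\phi\in C_c^\infty(Q)$.

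For such a $\phi$, set $K=\operatorname{supp}\phi$ and $\delta=\operatorname{dist}(K,\partial Q)>0$, and partition $\mathbb{R}^{d+1}$ into boxes $P_{\vec j}$ of side $\varepsilon_m$ in the first $d$ coordinates and $\varepsilon_m^k$ in the last, whose diameter $\lesssim\varepsilon_m+\varepsilon_m^k$ tends to $0$. Only boxes meeting $K$ contribute, and for $m$ large each of them lies inside $Q$; on such a box $P$ with centre $c_P$ I split $\int_P g_m^{\varepsilon_m}\phi=\phi(c_P)\int_P g_m^{\varepsilon_m}+\int_P g_m^{\varepsilon_m}(\phi-\phi(c_P))$, and the first term vanishes because $\int_P g_m(x/\varepsilon_m,t/\varepsilon_m^k)\,dx\,dt=\varepsilon_m^{d+k}\mathcal{M}(g_m)=0$. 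Summing over these boxes, bounding $|\phi-\phi(c_P)|\le\|\nabla\phi\|_\infty\operatorname{diam}(P)$, and applying H\"older together with the uniform bound of the previous step gives $\bigl|\int_Q g_m^{\varepsilon_m}\phi\bigr|\le C\,\|\nabla\phi\|_\infty\operatorname{diam}(P)\to 0$. Undoing the centring reduction yields $h_m(x/\varepsilon_m,t/\varepsilon_m^k)\rightharpoonup\widehat{M_0}$ in $L^p(Q)$. The case $p=\infty$ follows from the identical computation: there $\|h_m(\cdot/\varepsilon_m,\cdot/\varepsilon_m^k)\|_{L^\infty(Q)}\le C$ bounds the sequence in every $L^q(Q)$, $1<q<\infty$, and in $L^\infty(Q)$, so density of $C_c^\infty(Q)$ in $L^{q'}(Q)$ and in $L^1(Q)$ upgrades the convergence to weak-$L^q$ and weak-$\ast$-$L^\infty$ respectively.

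The only genuine obstacle is to keep the error term $\int_P g_m^{\varepsilon_m}(\phi-\phi(c_P))$ under control uniformly in $m$ even though $g_m$ is moving; this is exactly what the uniform $L^p(\mathbb{T}^{d+1})$ bound provides, since—unlike pointwise data—that bound is preserved by the anisotropic rescaling via periodicity. The anisotropy $\varepsilon_m$ versus $\varepsilon_m^k$ is otherwise harmless, because $k>0$ forces both scales to $0$, so the period cells shrink to points and the diameter factor above does its job.
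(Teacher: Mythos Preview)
Your argument is correct and is precisely the standard averaging-on-period-cells proof that the paper has in mind: the paper itself gives no details, merely saying ``The proof is standard. One can find the proof in \cite[Lemma 3.4]{MR3361284} for $p=2$, and we omit the details for simplicity.'' Your write-up supplies those omitted details, correctly handling both the anisotropic scaling and the $m$-dependence of $h_m$ via the uniform $L^p(\mathbb{T}^{d+1})$ bound.
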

\begin{proof}
The proof is standard. One can find the proof in \cite[Lemma 3.4]{MR3361284} for $p=2$, and we omit the details for simplicity.
\end{proof}

The next lemma is due to J.P. Aubin and J.L. Lions, which is used to guarantee the strong convergence in $L^2(\Omega_T)$.

\begin{lemma}
Let $X_0\subset X\subset X_1$ be three Banach spaces. Suppose that $X_0,X_1$ are reflexive and that the injection $X_0\subset X$ is compact. Let $1<p_0,p_1<\infty$. Define
$$\mathcal{B}=\left\{u:u\in L^{p_0}(T_0,T_1;X_0)\text{ and }\partial_tu\in L^{p_1}(T_0,T_1;X_1)\right\}$$
with norm $$||u||_{\mathcal{B}}=||u||_{L^{p_0}(T_0,T_1;X_0)}+||\partial_tu||_{L^{p_1}(T_0,T_1;X_1)}.$$
Then, $\mathcal{B}$ is a Banach space, and the injection $\mathcal{B}\subset L^{p_0}(T_0,T_1;X)$ is compact.
\end{lemma}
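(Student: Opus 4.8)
The plan is to run the classical Aubin--Lions argument, whose two ingredients are a soft interpolation (Ehrling-type) inequality that trades strong compactness in $X$ for strong compactness in $X_1$, and a time-mollification device that converts the merely weak bound on $\partial_t u$ in $L^{p_1}(T_0,T_1;X_1)$ into the equicontinuity needed to apply Arzel\`a--Ascoli. That $\mathcal{B}$ is a Banach space is routine: a Cauchy sequence in $\mathcal{B}$ converges in $L^{p_0}(T_0,T_1;X_0)$ to some $u$ and its time derivatives converge in $L^{p_1}(T_0,T_1;X_1)$ to some $w$; testing against scalar functions $\varphi\in C_c^\infty(T_0,T_1)$ identifies $w=\partial_t u$, so $u\in\mathcal{B}$ and $\|u_n-u\|_{\mathcal{B}}\to 0$.

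For the compactness statement, I would first establish the Ehrling-type inequality: for every $\eta>0$ there is $C_\eta$ such that $\|v\|_X\le \eta\|v\|_{X_0}+C_\eta\|v\|_{X_1}$ for all $v\in X_0$. This is the standard argument by contradiction, using that the unit ball of $X_0$ is relatively compact in $X$ together with the continuity of the injection $X\hookrightarrow X_1$. Applying it pointwise in $t$ to a difference $u-v$ and taking $L^{p_0}_t$-norms yields
\[
\|u-v\|_{L^{p_0}(T_0,T_1;X)}\ \le\ \eta\,\|u-v\|_{L^{p_0}(T_0,T_1;X_0)}+C_\eta\,\|u-v\|_{L^{p_0}(T_0,T_1;X_1)}.
\]
Hence, given a sequence $\{u_n\}$ with $\|u_n\|_{\mathcal{B}}\le M$, it suffices to extract a subsequence that is Cauchy in $L^{p_0}(T_0,T_1;X_1)$: choosing $\eta$ small first and then the indices large makes the right-hand side arbitrarily small, and $L^{p_0}(T_0,T_1;X)$ is complete.

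To produce that subsequence I would introduce the time averages $u_n^h(t)=\frac1h\int_0^h u_n(t+\sigma)\,d\sigma$ (extending $u_n$ past $T_1$ by its endpoint value, or working on $[T_0,T_1-h]$ and absorbing the short tail, which is harmless since the values $u_n(t)$ turn out to be uniformly bounded in $X_1$). Using the continuous-in-time representative of $u_n$ (valued in $X_1$, since $u_n\in W^{1,1}(T_0,T_1;X_1)$) together with the H\"older bound $\|u_n(t)-u_n(s)\|_{X_1}\le |t-s|^{1-1/p_1}\|\partial_t u_n\|_{L^{p_1}(T_0,T_1;X_1)}$, one gets $\|u_n-u_n^h\|_{L^{p_0}(T_0,T_1;X_1)}\le\omega(h)$ with $\omega(h)\to 0$ uniformly in $n$. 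For each fixed $h$, $\{u_n^h(t)\}_n$ is bounded in $X_0$ uniformly in $t$ (H\"older in $\sigma$ plus the $L^{p_0}(T_0,T_1;X_0)$ bound), hence relatively compact in $X_1$ because $X_0$ is compactly embedded in $X_1$; and $\{u_n^h\}_n$ is equi-H\"older from $[T_0,T_1-h]$ into $X_1$ by the same estimate. Arzel\`a--Ascoli then makes $\{u_n^h\}_n$ relatively compact in $C([T_0,T_1-h];X_1)$, hence in $L^{p_0}(T_0,T_1;X_1)$. A diagonal extraction over $h=1/j$ produces $\{u_{n_k}\}$ such that $\{u_{n_k}^{1/j}\}_k$ converges in $L^{p_0}(T_0,T_1;X_1)$ for every $j$; the splitting $u_{n_k}-u_{n_l}=(u_{n_k}-u_{n_k}^{1/j})+(u_{n_k}^{1/j}-u_{n_l}^{1/j})+(u_{n_l}^{1/j}-u_{n_l})$ together with $\omega(1/j)\to 0$ shows that $\{u_{n_k}\}$ is Cauchy in $L^{p_0}(T_0,T_1;X_1)$, as required.

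The crux — and essentially the only delicate point — is the mismatch of spaces: compactness is available only between $X_0$ and $X$ (equivalently $X_1$), while the time derivative is controlled only in the weakest space $X_1$, so neither ingredient by itself gives any equicontinuity of the $u_n$ themselves. The averages $u_n^h$ are precisely the objects that are simultaneously regular enough in $t$ (their difference quotients inherit the $\partial_t u_n$ bound, uniformly in $n$) and bounded in $X_0$ for each fixed $h$; the price is the error $\omega(h)$, which is harmless because it is uniform in $n$ and is absorbed at the end via the Ehrling inequality. A minor but worth-noting technical point is that every element of $\mathcal{B}$ admits a continuous-in-time representative with values in $X_1$, which is what legitimizes the pointwise-in-$t$ manipulations used above.
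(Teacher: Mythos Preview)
The paper does not prove this lemma; it simply states it and attributes it to Aubin and Lions. So there is no argument in the paper to compare your proposal against. Your sketch follows the classical Aubin--Lions route (Ehrling's inequality to reduce to compactness in $L^{p_0}(T_0,T_1;X_1)$, time-mollification to gain pointwise $X_0$-bounds and $X_1$-equicontinuity, Arzel\`a--Ascoli, diagonalization), and the main steps are correct. A couple of minor points worth tightening if you intend this as a full proof rather than a sketch: the claim that $X_0$ is compactly embedded in $X_1$ should be justified explicitly (it follows because the composition of the compact injection $X_0\hookrightarrow X$ with the continuous injection $X\hookrightarrow X_1$ is compact), and the treatment of the tail interval $[T_1-h,T_1]$ deserves a line --- either extend $u_n$ by reflection/constant and check the bounds persist, or observe that the contribution of that interval to the $L^{p_0}$-norm is $O(h^{1/p_0})$ times a quantity bounded uniformly in $n$, since $\sup_t\|u_n(t)\|_{X_1}$ is controlled via the $C^{0,1-1/p_1}([T_0,T_1];X_1)$ embedding you already invoked.
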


\subsection{Proof of Theorem 3.1}
In this subsection, we always assume that $k=2$, $\gamma=1$, $W\in L^\infty(\mathbb{T}^{d+1})$ with $\mathcal{M}(W)=0$.

In order to prove the homogenization part, we need first to obtain the uniform estimates for $u_\varepsilon$, which is stated in the following lemma.
\begin{lemma}[Uniform estimates]
Under the conditions in Theorem 3.1, there hold the following uniform estimates for $u_\varepsilon$:
\begin{equation*}
||u_\varepsilon||_{L^\infty_TL^2_x}+||\nabla u_\varepsilon||_{L^2(\Omega_T)}+|| u_\varepsilon||_{L^2(\Omega_T)}\leq C\left(||g||_{L^2(\Omega)}+||f||_{L^2(\Omega)}\right),
\end{equation*}
for a positive constant $C$ depending only on $||W||_{L^\infty(\mathbb{T}^{d+1})}$, $d$ and $T$.

\end{lemma}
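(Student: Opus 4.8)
The plan is to obtain the uniform bound via an energy estimate on the equation $(1.2)$ for $v_\varepsilon = e^{-Nt}u_\varepsilon$, the point being that the highly oscillating potential $\varepsilon^{-1}W^\varepsilon$ is, by itself, too singular to be absorbed directly into the Gronwall/energy argument (a naive estimate gives only $C\varepsilon^{-1}$). The key device is the corrector $\chi_1$ defined in $(3.2)$: since $\partial_\tau \chi_1 - \Delta_y \chi_1 = W$ in $\mathbb{T}^{d+1}$, the rescaled corrector $\chi_1^\varepsilon(x,t) = \chi_1(x/\varepsilon, t/\varepsilon^2)$ satisfies, because $k=2$ matches the diffusive scaling exactly,
\begin{equation*}
\partial_t \chi_1^\varepsilon - \varepsilon^{-2}\Delta \chi_1^\varepsilon = \varepsilon^{-2} W^\varepsilon .
\end{equation*}
Hence $\varepsilon^{-1}W^\varepsilon = \varepsilon\,\partial_t\chi_1^\varepsilon - \varepsilon^{-1}\Delta\chi_1^\varepsilon = \varepsilon\,\partial_t\chi_1^\varepsilon - \varepsilon^{-1}\operatorname{div}(\nabla_y\chi_1)^\varepsilon$, and since $\chi_1\in L^\infty$ with $\nabla_y\chi_1,\ \partial_\tau\chi_1 \in L^\infty$ (by parabolic regularity on the torus, as $W\in L^\infty$ — actually we only need $\nabla_y\chi_1\in L^\infty$, which follows from $W\in L^\infty$ together with maximal regularity / De Giorgi–Nash–Moser on $\mathbb{T}^{d+1}$), the singular potential term is a sum of a term in divergence form carrying a factor $\varepsilon^{-1}\cdot\varepsilon = 1$ after integration by parts against $v_\varepsilon$ and a term $\varepsilon\,\partial_t\chi_1^\varepsilon$ which, after moving the $\partial_t$ onto the test function, again produces only $O(1)$ contributions.

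The steps I would carry out, in order: (i) write the weak formulation of $(1.2)$ and test with $v_\varepsilon$ itself; the diffusion term gives $\|\nabla v_\varepsilon\|_{L^2}^2$, the term $Nv_\varepsilon^2$ is non-negative, and $\tilde f v_\varepsilon$ is handled by Young's inequality. (ii) For the dangerous term $\int \varepsilon^{-1}W^\varepsilon v_\varepsilon^2$, substitute the identity above: the contribution $-\varepsilon^{-1}\int\operatorname{div}(\nabla_y\chi_1)^\varepsilon\, v_\varepsilon^2$ integrates by parts to $2\varepsilon^{-1}\int (\nabla_y\chi_1)^\varepsilon\cdot\nabla v_\varepsilon\, v_\varepsilon$ — note $\varepsilon^{-1}$ is NOT yet killed here; this is where one must be more careful. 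Actually the correct bookkeeping is that $\operatorname{div}[(\nabla_y\chi_1)^\varepsilon] = \varepsilon^{-1}(\operatorname{div}_y\nabla_y\chi_1)^\varepsilon = \varepsilon^{-1}(\Delta_y\chi_1)^\varepsilon$, so one should instead write $\varepsilon^{-1}W^\varepsilon = \varepsilon^{-1}(\partial_\tau\chi_1)^\varepsilon - \varepsilon^{-1}(\Delta_y\chi_1)^\varepsilon$ and observe $(\partial_\tau\chi_1)^\varepsilon = \varepsilon^2\,\partial_t(\chi_1^\varepsilon)$ while $(\Delta_y\chi_1)^\varepsilon = \varepsilon^2\,\Delta(\chi_1^\varepsilon)$, giving $\varepsilon^{-1}W^\varepsilon = \varepsilon\,\partial_t(\chi_1^\varepsilon) - \varepsilon\,\Delta(\chi_1^\varepsilon)$. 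Then $\int \varepsilon^{-1}W^\varepsilon v_\varepsilon^2 = \varepsilon\frac{d}{dt}\int \chi_1^\varepsilon v_\varepsilon^2 - 2\varepsilon\int\chi_1^\varepsilon v_\varepsilon\partial_t v_\varepsilon + \varepsilon\int\nabla(\chi_1^\varepsilon)\cdot\nabla(v_\varepsilon^2)$; the first term is a total derivative absorbed after integrating in time (bounded by $\varepsilon\|\chi_1\|_\infty\|v_\varepsilon(t)\|_{L^2}^2$, then the $t=0$ value is $\le\varepsilon\|\chi_1\|_\infty\|g\|_{L^2}^2$, and the running term for small $\varepsilon$ is absorbed into $\|v_\varepsilon\|_{L^\infty_TL^2_x}^2$); the last term is $2\varepsilon\int(\nabla_y\chi_1)^\varepsilon\cdot\nabla v_\varepsilon\,v_\varepsilon$ wait — $\nabla(\chi_1^\varepsilon) = \varepsilon^{-1}(\nabla_y\chi_1)^\varepsilon$ so this is $2\int(\nabla_y\chi_1)^\varepsilon\cdot\nabla v_\varepsilon\, v_\varepsilon$, bounded by $\|\nabla_y\chi_1\|_\infty\big(\eta\|\nabla v_\varepsilon\|_{L^2}^2 + \eta^{-1}\|v_\varepsilon\|_{L^2}^2\big)$ with $\eta$ small. (iii) The remaining term $-2\varepsilon\int\chi_1^\varepsilon v_\varepsilon\partial_t v_\varepsilon$ is the genuinely delicate one; here I would substitute $\partial_t v_\varepsilon$ from the equation $(1.2)$ itself, $\partial_t v_\varepsilon = \Delta v_\varepsilon - Nv_\varepsilon + \varepsilon^{-1}W^\varepsilon v_\varepsilon + \tilde f$, and check that each resulting term carries at least one power of $\varepsilon$ to beat the one $\varepsilon^{-1}$ (the worst being $-2\varepsilon\int\chi_1^\varepsilon v_\varepsilon\cdot\varepsilon^{-1}W^\varepsilon v_\varepsilon = -2\int\chi_1^\varepsilon W^\varepsilon v_\varepsilon^2$, which is $O(1)$ since $\chi_1, W\in L^\infty$; the term $2\varepsilon\int\chi_1^\varepsilon v_\varepsilon\Delta v_\varepsilon$ integrates by parts to $O(\varepsilon)\|\nabla v_\varepsilon\|^2 + O(1)$ using $\nabla\chi_1^\varepsilon = \varepsilon^{-1}(\nabla_y\chi_1)^\varepsilon$).

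After collecting everything, choosing $N$ large enough (depending only on $\|W\|_{L^\infty}$ and $d$) to dominate the zeroth-order terms with the wrong sign, choosing $\eta$ small to absorb the $\|\nabla v_\varepsilon\|_{L^2}^2$ contributions into the good diffusion term, and restricting to $\varepsilon \le \varepsilon_0$ small (the finitely many large $\varepsilon$ being covered by the classical non-uniform bound of Lemma 4.1, harmlessly), Gronwall's inequality in $t$ yields $\|v_\varepsilon\|_{L^\infty_TL^2_x}^2 + \|\nabla v_\varepsilon\|_{L^2(\Omega_T)}^2 \le C(\|g\|_{L^2}^2 + \|f\|_{L^2(\Omega_T)}^2)$, and undoing $v_\varepsilon = e^{-Nt}u_\varepsilon$ (with $0<t<T$, $N,T$ fixed) gives the claimed bound for $u_\varepsilon$, including $\|u_\varepsilon\|_{L^2(\Omega_T)} \le \sqrt T\,\|u_\varepsilon\|_{L^\infty_TL^2_x}$. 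The main obstacle is exactly the $\varepsilon\int\chi_1^\varepsilon v_\varepsilon\partial_t v_\varepsilon$ term: one cannot bound $\partial_t v_\varepsilon$ uniformly, so the trick of feeding the equation back in — the parabolic analogue of the "flux corrector" bookkeeping in elliptic homogenization — is essential, and one must verify carefully that it does not reintroduce an uncontrolled $\varepsilon^{-1}$. Everything else is a routine energy estimate.
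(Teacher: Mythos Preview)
Your approach is essentially the same as the paper's: rewrite $\varepsilon^{-1}W^\varepsilon=\varepsilon\,\partial_t\chi_1^\varepsilon-\varepsilon\,\Delta_x\chi_1^\varepsilon$ via the corrector $(3.2)$, integrate by parts in $t$ and $x$ in the term $\int\varepsilon^{-1}W^\varepsilon v_\varepsilon^2$, and handle the resulting $-2\varepsilon\int\chi_1^\varepsilon v_\varepsilon\,\partial_t v_\varepsilon$ by substituting the equation for $\partial_t v_\varepsilon$ so that the worst contribution is the bounded term $-2\int\chi_1^\varepsilon W^\varepsilon v_\varepsilon^2$. The paper carries this out a bit more tersely (without the explicit $\varepsilon\le\varepsilon_0$ case distinction, relying instead on $0<\varepsilon<1$ and choosing $N$ large), but the argument is the same.
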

\begin{proof}Recall that $v_\varepsilon=u_\varepsilon e^{-Nt}$ and $\chi_1(y,\tau)$ satisfy the following parabolic equations:
\begin{equation}\left\{\begin{aligned}
\partial_t v_\varepsilon-\Delta v_\varepsilon+\left(N-\varepsilon^{-1}W^\varepsilon\right)v_\varepsilon&=\tilde{f}, \quad\quad\text{in }\Omega\times (0,T),\\
v_\varepsilon&=0, \quad\quad\text{on }\partial\Omega\times (0,T),\\
v_\varepsilon&=g, \quad\quad\text{on }\Omega\times \{t=0\},
\end{aligned}\right.\end{equation}
with $\tilde{f}=fe^{-Nt}$, and
\begin{equation}\left\{\begin{aligned}
\partial_\tau\chi_1 -\Delta_y\chi_1=W\quad\text{ in }\quad\mathbb{T}^{d+1},\\
\chi_1\text{ is 1-periodic with }\mathcal{M}(\chi_1)=0,
\end{aligned}\right.\end{equation}
respectively. According to the parabolic theory, we know that
$$\chi_1\in W^{2,1}_p(\mathbb{T}^{d+1}),\text{ for any }1<p<\infty,$$
and by embedding,
$$\chi_1 \in C^{1+\alpha,\frac{1+\alpha}2}(\mathbb{T}^{d+1}), \text{ for any }0<\alpha<1.$$
 Then, for $T\geq t>0$, a direct computation shows that
\begin{equation}
\int_{\Omega_t}\varepsilon^{-1}W^\varepsilon v_\varepsilon^2=\int_{\Omega_t}\left(\varepsilon v_\varepsilon^2\partial_t\chi_1^\varepsilon -\varepsilon v_\varepsilon^2\Delta_x\chi_1^\varepsilon\right)
=E_1+E_2.\end{equation}
It is easy to see that
\begin{equation}\begin{aligned}
E_1=&\int_{\Omega\times\{t\}}\varepsilon v_\varepsilon^2\chi_1^\varepsilon-\int_{\Omega\times\{0\}}\varepsilon g^2\chi_1^\varepsilon-2\int_{\Omega_t}\varepsilon v_\varepsilon\chi_1^\varepsilon\partial_t v_\varepsilon\\
=&E_{11}+E_{12}+E_{13}.
\end{aligned}\end{equation}

\noindent
A direct computation yields that
\begin{equation}\begin{aligned}
E_{13}=&-2\int_{\Omega_t}\varepsilon v_\varepsilon\chi_1^\varepsilon\left(\Delta v_\varepsilon+\varepsilon^{-1}W^\varepsilon v_\varepsilon-Nv_\varepsilon+\tilde{f} \right)\\
=&2\int_{\Omega_t}\varepsilon \chi_1^\varepsilon|\nabla v_\varepsilon|^2+2\int_{\Omega_t} v_\varepsilon\nabla_y\chi_1^\varepsilon\nabla v_\varepsilon-2\int_{\Omega_t}\chi_1^\varepsilon W^\varepsilon v_\varepsilon^2+2\int_{\Omega_t}\varepsilon v_\varepsilon\chi_1^\varepsilon\left(Nv_\varepsilon-\tilde{f} \right)
\end{aligned}\end{equation}

and
\begin{equation}
E_2=\int_{\Omega_t}\varepsilon \chi_1^\varepsilon|\nabla v_\varepsilon|^2+\int_{\Omega_t} v_\varepsilon\nabla_y\chi_1^\varepsilon\nabla v_\varepsilon.
\end{equation}
Consequently, basic energy estimates after combining $(4.3)$-$(4.6)$ yields that
\begin{equation*}
\int_{\Omega\times \{t\}} |v_\varepsilon|^2+\int_0^t\int_\Omega |\nabla v_\varepsilon|^2+\int_0^t\int_\Omega |v_\varepsilon|^2\leq C\int_0^t\int_\Omega |f|^2+C\int_\Omega |g|^2,\end{equation*}
for a positive constant $C$ depending only on $||W||_{L^\infty(\mathbb{T}^{d+1})}$ and $T$, which yields the desired estimates after noting $v_\varepsilon=u_\varepsilon e^{-Nt}$.

\end{proof}
Now we are ready to give the proof Theorem 3.1.\\
\noindent\textbf{Prove of Theorem 3.1.}\\

\noindent
First, using the same notations in Lemma 4.4, a direct computation shows that
\begin{equation}\begin{aligned}
&\varepsilon^{-1}W^\varepsilon v_\varepsilon =\varepsilon v_\varepsilon\partial_t\chi_1^\varepsilon -\varepsilon v_\varepsilon\Delta_x\chi_1^\varepsilon\\
 =&\varepsilon \partial_t (v_\varepsilon\chi_1^\varepsilon)-\varepsilon\Delta_x( v_\varepsilon\chi_1^\varepsilon)+2\nabla v_\varepsilon\nabla_y\chi_1^\varepsilon -\varepsilon\chi_1^\varepsilon (\partial_t v_\varepsilon-\Delta v_\varepsilon)\\
=&\varepsilon \partial_t (v_\varepsilon\chi_1^\varepsilon)-\varepsilon\Delta_x( v_\varepsilon\chi_1^\varepsilon)+2\nabla v_\varepsilon\nabla_y\chi_1^\varepsilon -\varepsilon\chi_1^\varepsilon(\varepsilon^{-1}W^\varepsilon v_\varepsilon-Nv_\varepsilon+\tilde{f})\\
=&\varepsilon \partial_t (v_\varepsilon\chi_1^\varepsilon)-\varepsilon\Delta_x( v_\varepsilon\chi_1^\varepsilon)+2\nabla v_\varepsilon\nabla_y\chi_1^\varepsilon -\chi_1^\varepsilon W^\varepsilon v_\varepsilon+\varepsilon\chi_1^\varepsilon N v_\varepsilon-\varepsilon\chi_1^\varepsilon \tilde{f}.
\end{aligned}\end{equation}
Therefore, by setting
$\tilde{v}_\varepsilon= v_\varepsilon -\varepsilon\chi_1^\varepsilon v_\varepsilon$, it is easy to check that $\tilde{v}_\varepsilon$ satisfies the following parabolic equation:
\begin{equation}\left\{\begin{aligned}
\partial_t \tilde{v}_\varepsilon-\Delta \tilde{v}_\varepsilon +N\tilde{v}_\varepsilon&=f_1(v_\varepsilon),& &\text{in }\Omega\times (0,T),\\
\tilde{v}_\varepsilon&=0,& &\text{on }\partial\Omega\times (0,T),\\
\tilde{v}_\varepsilon&=g-\varepsilon\chi_1^\varepsilon g,&\quad &\text{on }\Omega\times \{t=0\}.
\end{aligned}\right.\end{equation}
with $f_1(v_\varepsilon)=\tilde{f}+2\nabla v_\varepsilon\nabla_y\chi_1^\varepsilon -\chi_1^\varepsilon W^\varepsilon v_\varepsilon-\varepsilon\chi_1^\varepsilon \tilde{f}$.
Applying the $W^{2,1}_2$ theory, we have
\begin{equation}||\tilde{v}_\varepsilon||_{W^{2,1}_2(\Omega_T)}\leq C\left(||f||_{L^2(\Omega_T)}+||g||_{H^1(\Omega)}\right).\end{equation}
Moreover, it is easy to see that
$$||\partial_t \partial_i \tilde{v}_\varepsilon||_{L^2(0,T;H^{-1}(\Omega))}\leq C\left(||f||_{L^2(\Omega_T)}+||g||_{H^1(\Omega)}\right),$$
which combines with $(4.9)$ and Lemma 4.3 implies that there exists $v_0\in W^{2,1}_2(\Omega_T)$, such that
\begin{equation}\begin{aligned}
\tilde{v}_\varepsilon \rightharpoonup v_0\text{ weakly in }W^{2,1}_2(\Omega_T);\\
\tilde{v}_\varepsilon \rightarrow v_0\text{ strongly in }L^2(0,T;H^1(\Omega));\\
v_\varepsilon \rightarrow v_0\text{ strongly in }L^2(\Omega_T).
\end{aligned}\end{equation}
To proceed, note that
$$\begin{aligned}2\nabla v_\varepsilon\nabla_y\chi_1^\varepsilon&= 2\nabla \tilde{v}_\varepsilon \nabla_y\chi_1^\varepsilon+2\varepsilon \chi_1^\varepsilon \nabla v_\varepsilon \nabla_y\chi_1^\varepsilon+ 2v_\varepsilon |\nabla_y\chi_1^\varepsilon|^2\\
&\rightharpoonup 2v_0\mathcal{M}(|\nabla_y\chi_1|^2)=2v_0\mathcal{M}(\chi_1 W)\text{ weakly in }L^2(\Omega_T),
\end{aligned}$$
where the last equality is obtained by multiplying the equation $(4.2)$ by $\chi_1$ and integrating the resulting equation over $\mathbb{T}^{d+1}$.

 Consequently, $v_0$ satisfies the homogenized equation:
\begin{equation}\left\{\begin{aligned}
\partial_t {v}_0-\Delta {v}_0 +\left(N-\mathcal{M}(\chi_1W)\right){v}_0&=\tilde{f}, \quad\quad\text{in }\Omega\times (0,T),\\
{v}_0&=0, \quad\quad\text{on }\partial\Omega\times (0,T),\\
{v}_0&=g, \quad\quad\text{on }\Omega\times \{t=0\},
\end{aligned}\right.\end{equation}
which yields the first part of Theorem 3.1 after noting that $v_0=u_0 e^{-Nt}$.\\

To see the second part,
by setting
$w_\varepsilon= v_\varepsilon -\varepsilon\chi_1^\varepsilon v_\varepsilon-v_0$, it is easy to check that $w_\varepsilon$ satisfies the following parabolic equation:
\begin{equation}\left\{\begin{aligned}
\partial_t w_\varepsilon-\Delta w_\varepsilon +Nw_\varepsilon&=f_{11}(v_\varepsilon),&\quad &\text{in }\Omega\times (0,T),\\
w_\varepsilon&=0,& &\text{on }\partial\Omega\times (0,T),\\
w_\varepsilon&=-\varepsilon\chi_1^\varepsilon g,& &\text{on }\Omega\times \{t=0\}.
\end{aligned}\right.\end{equation}
with
$$\begin{aligned}f_{11}(v_\varepsilon)=&2\nabla v_\varepsilon\nabla_y\chi_1^\varepsilon-\mathcal{M}(\chi_1W){v}_0 -\chi_1^\varepsilon W^\varepsilon v_\varepsilon-\varepsilon\chi_1^\varepsilon \tilde{f}\\
=&2\nabla w_\varepsilon \nabla_y\chi_1^\varepsilon+2\nabla v_0 \nabla_y\chi_1^\varepsilon+2\varepsilon \chi_1^\varepsilon \nabla v_\varepsilon \nabla_y\chi_1^\varepsilon-\varepsilon\chi_1^\varepsilon \tilde{f}\\
&+ 2v_\varepsilon|\nabla_y\chi_1^\varepsilon|^2
-\mathcal{M}(\chi_1W){v}_0 -\chi_1^\varepsilon W^\varepsilon v_\varepsilon.\end{aligned}$$

\noindent
It is easy to see that
\begin{equation}
2\int_{\Omega_T}\nabla v_0 \nabla_y\chi_1^\varepsilon\cdot w_\varepsilon\leq C\varepsilon ||\Delta v_0||_{L^2(\Omega_T)}||w_\varepsilon||_{L^2(\Omega_T)}+C\varepsilon ||\nabla v_0||_{L^2(\Omega_T)}|| \nabla w_\varepsilon||_{L^2(\Omega_T)},
\end{equation}
and \begin{equation}
2\int_{\Omega_T} \nabla w_\varepsilon \nabla_y\chi_1^\varepsilon \cdot w_\varepsilon\leq \frac1 {16}|| \nabla w_\varepsilon||_{L^2(\Omega_T)}^2+16 ||w_\varepsilon||_{L^2(\Omega_T)}^2.
\end{equation}
Moreover, a direct computation shows that
\begin{equation}\begin{aligned}
&2v_\varepsilon|\nabla_y\chi_1^\varepsilon|^2 -\mathcal{M}(\chi_1W){v}_0 -\chi_1^\varepsilon W^\varepsilon v_\varepsilon\\
=&v_0 \left(2|\nabla_y\chi_1^\varepsilon|^2
-\mathcal{M}(\chi_1W)-\chi_1^\varepsilon W^\varepsilon\right)+(v_\varepsilon-v_0)\left(2|\nabla_y\chi_1^\varepsilon|^2
-\chi_1^\varepsilon W^\varepsilon\right)\\
=&:v_0 \left(2|\nabla_y\chi_1^\varepsilon|^2
-\mathcal{M}(\chi_1W)-\chi_1^\varepsilon W^\varepsilon\right)+E_3,
\end{aligned}\end{equation}
with
\begin{equation}
|E_3|\leq C|w_\varepsilon|+C\varepsilon|v_\varepsilon|.
\end{equation}
To proceed, due to $\mathcal{M}(|\nabla_y\chi_1|^2)=\mathcal{M}(\chi_1W)$, we can introduce $\chi_{1-1}(y,\tau)$ satisfying
\begin{equation}\left\{\begin{aligned}
&\partial_\tau\chi_{1-1} -\Delta_y\chi_{1-1}=2|\nabla_y\chi_1|^2
-\mathcal{M}(\chi_1W)-\chi_1 W\text{ in }\mathbb{T}^{d+1},\\
&\chi_{1-1}\text{ is 1-periodic in }(y,\tau)\text{ with }\mathcal{M}(\chi_{1-1})=0.
\end{aligned}\right.\end{equation}
Note that $\nabla_y\chi_1\in L^\infty(\mathbb{T}^{d+1})$, then according to the parabolic theory,
$$\chi_{1-1}\in W^{2,1}_p(\mathbb{T}^{d+1}),\text{ for any }1<p<\infty,$$
and by embedding,
$$\chi_{1-1} \in C^{1+\alpha,\frac{1+\alpha}2}(\mathbb{T}^{d+1}), \text{ for any }0<\alpha<1.$$
Then, we have
\begin{equation}\begin{aligned}
E_4&=:\int_{\Omega_T}v_0 \left(2|\nabla_y\chi_1^\varepsilon|^2
-\mathcal{M}(\chi_1W)-\chi_1^\varepsilon W^\varepsilon\right)w_\varepsilon\\
&=\varepsilon^2\int_{\Omega_T}\left(\partial_t\chi_{1-1}^\varepsilon-\Delta_x\chi_{1-1}^\varepsilon\right)v_0 w_\varepsilon,
\end{aligned}\end{equation}
which implies that
\begin{equation}\begin{aligned}
|E_4|\leq &C\varepsilon^2||v_0||_{L^\infty_TL^2_x}||w_\varepsilon||_{L^\infty_TL^2_x}+
C\varepsilon^2||\partial_t v_0||_{L^2(\Omega_T))}||w_\varepsilon||_{L^2(\Omega_T))}\\
&+C\varepsilon^2|| v_0||_{L^2(\Omega_T))}||\partial_t w_\varepsilon||_{L^2(\Omega_T))}
+C\varepsilon ||v_0||_{L^2_TH^1_x}||w_\varepsilon||_{L^2_TH^1_x}.
\end{aligned}\end{equation}

Consequently, combining $(4.13)$-$(4.19)$ and $||\partial_t w_\varepsilon||_{L^2(\Omega_T)}\leq C\left(||f||_{L^2(\Omega_T)}+||g||_{H^1(\Omega)}\right)$ after choosing $N$ suitable large yields that
\begin{equation*}||w_\varepsilon||_{L^\infty_TL^2_x} +||\nabla w_\varepsilon||_{L^2(\Omega_T)} +|| w_\varepsilon||_{L^2(\Omega_T)} \leq C\varepsilon \left(||f||_{L^2(\Omega_T)} +||g||_{H^1(\Omega)} \right),\end{equation*}
which yields the second part of Theorem 3.1 after noting that $v_\varepsilon=u_\varepsilon e^{-Nt}$ and $v_0=u_0 e^{-Nt}$.
\qed

\subsection{Proof of Theorem 3.2}
In this subsection, we always assume that $k>2$, $\gamma=1$, $W\in L^\infty(\mathbb{T}^{d+1})$ with $\mathcal{M}(W)=0$ as well  as $\nabla_y\left(W-\mathcal{M}_\tau(W)\right)\in L^\infty(\mathbb{T}^{d+1})$. Inspired by the two scale expansions, we rewrite
\begin{equation*}
\varepsilon^{-1}W^\varepsilon v_\varepsilon =\varepsilon^{-1}\left(W^\varepsilon-\mathcal{M}_\tau(W)(x/\varepsilon)\right)v_\varepsilon
+\varepsilon^{-1}\mathcal{M}_\tau(W)(x/\varepsilon)v_\varepsilon=:\varepsilon^{-1}{W}_1^\varepsilon v_\varepsilon+\varepsilon^{-1}{W}_2^\varepsilon v_\varepsilon.
\end{equation*}

Then, \begin{equation}\mathcal{M}_\tau(W_1)(y)=0\text{ for every }y\in \mathbb{T}^d,\ \mathcal{M}_y(W_2)=\mathcal{M}(W)=0.\end{equation}

In order to prove the homogenization part, we need first to obtain the uniform estimates for $u_\varepsilon$, which is stated in the following lemma.

\begin{lemma}[Uniform estimates]
Under the conditions in Theorem 3.2, there hold the following uniform estimates for $u_\varepsilon$:
\begin{equation}
||u_\varepsilon||_{L^\infty_TL^2_x}+||\nabla u_\varepsilon||_{L^2(\Omega_T)}+|| u_\varepsilon||_{L^2(\Omega_T)}\leq C\left(||g||_{L^2(\Omega)}+||f||_{L^2(\Omega)}\right),
\end{equation}
for a positive constant $C$ depending only on $W$, $d$ and $T$.
\end{lemma}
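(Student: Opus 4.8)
The plan is to mimic the argument of Lemma 4.4, but now the oscillating potential $\varepsilon^{-1}W^\varepsilon$ must be split into the two pieces $\varepsilon^{-1}W_1^\varepsilon v_\varepsilon$ and $\varepsilon^{-1}W_2^\varepsilon v_\varepsilon$ introduced just above, and each piece is handled by its own ``corrector-integration-by-parts'' identity adapted to the scale $t/\varepsilon^k$. For the $y$-only piece $W_2(y)=\mathcal{M}_\tau(W)(y)$ with $\mathcal{M}_y(W_2)=0$, I would use the elliptic corrector $\chi_2$ from $(3.5)$ (extended so that $\Delta_y\chi_2=-W_2$), so that $\varepsilon^{-1}W_2^\varepsilon=-\varepsilon^{-1}\Delta_y\chi_2^\varepsilon=-\varepsilon\,\Delta_x(\chi_2^\varepsilon)+2\operatorname{div}_x(\nabla_y\chi_2^\varepsilon)\cdot(\text{stuff})$; more precisely $\varepsilon^{-1}W_2^\varepsilon v_\varepsilon^2 = -\varepsilon\,\Delta_x(\chi_2^\varepsilon)v_\varepsilon^2$, which after integration by parts in $x$ over $\Omega_t$ produces terms $\varepsilon\int\chi_2^\varepsilon|\nabla v_\varepsilon|^2$ and $\varepsilon\int v_\varepsilon\nabla_y\chi_2^\varepsilon\cdot\nabla v_\varepsilon$ — wait, the $\nabla_y$ hits with a $\varepsilon^{-1}$, so it is $\int v_\varepsilon\nabla_y\chi_2^\varepsilon\cdot\nabla v_\varepsilon$ with no bad power of $\varepsilon$, exactly as in $(4.6)$. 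Both are absorbable by the good gradient term provided $\varepsilon$ is small, using $\|\chi_2\|_{L^\infty}+\|\nabla_y\chi_2\|_{L^\infty}\leq C$ (which requires $\nabla_y W_2=\nabla_y\mathcal{M}_\tau(W)\in L^\infty$, part of the hypothesis, together with $W_2\in L^\infty$ and elliptic regularity on $\mathbb{T}^d$).

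For the piece $\varepsilon^{-1}W_1^\varepsilon v_\varepsilon$, where $\mathcal{M}_\tau(W_1)(y)=0$ for every $y$, the natural corrector is $\chi_5(y,\tau):=\int_0^\tau W_1(y,s)\,ds$ (the analogue of $\chi_5$ in $(3.18)$, now built from $W_1$ rather than $W$), which is $1$-periodic in $\tau$ precisely because of the zero-mean-in-$\tau$ condition. Then $W_1^\varepsilon = \varepsilon^{-k}\,\partial_t(\chi_5^\varepsilon)$, so $\varepsilon^{-1}W_1^\varepsilon v_\varepsilon^2 = \varepsilon^{-1-k}\partial_t(\chi_5^\varepsilon)v_\varepsilon^2$, and integrating by parts in $t$ over $\Omega_t$ gives a boundary term $\varepsilon^{-1-k}\int_{\Omega\times\{t\}}\chi_5^\varepsilon v_\varepsilon^2$ and a bulk term $-2\varepsilon^{-1-k}\int_{\Omega_t}\chi_5^\varepsilon v_\varepsilon\,\partial_t v_\varepsilon$. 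The bad powers $\varepsilon^{-1-k}$ look alarming, so the point is that one must integrate by parts a second time: substituting $\partial_t v_\varepsilon=\Delta v_\varepsilon+\varepsilon^{-1}W^\varepsilon v_\varepsilon-Nv_\varepsilon+\tilde f$ from $(1.2)$ reintroduces the potential term, so instead I expect the cleaner route is to first integrate by parts in $x$: write $\varepsilon^{-1}W_1^\varepsilon = \varepsilon^{-1-k}\partial_t\chi_5^\varepsilon$ and test the equation against $\varepsilon^{1+k}\chi_5^\varepsilon v_\varepsilon$ — but that power is wrong too. The correct bookkeeping, as in Case 1 of Section 2, is that one needs a second corrector: set $\tilde\chi_5=\chi_5-\mathcal{M}_\tau(\chi_5)$ and $\chi_4(y,\tau)=\int_0^\tau\tilde\chi_5\,ds$ as in $(3.16)$–$(3.17)$, and iterate the integration by parts, each step trading one factor of $\varepsilon^{-k}$ coming from $\partial_t$ against two derivatives in $y$ (hence the required bounds $\nabla_yW,\nabla_y^2W\in L^\infty$). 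After the second iteration the leading singular contribution becomes $\mathcal{M}(\nabla_y\chi_4\cdot\nabla_yW_1)v_\varepsilon^2$-type terms of order $O(1)$ in $\varepsilon$, plus remainders carrying genuinely positive powers $\varepsilon^{k-2}$ or better, all of which are controlled by $\|v_\varepsilon\|_{L^\infty_TL^2_x}^2+\|\nabla v_\varepsilon\|_{L^2(\Omega_T)}^2$ with small constant.

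Assembling these estimates into the energy identity for $(1.2)$ — multiply the equation by $v_\varepsilon$, integrate over $\Omega_t$, and move the potential term to the right via the two identities above — yields, after choosing $N$ large enough to dominate the $O(1)$ ``effective potential'' contributions and $\varepsilon$ small enough to absorb the $\varepsilon\|\nabla v_\varepsilon\|^2$ pieces, the inequality $\int_{\Omega\times\{t\}}|v_\varepsilon|^2+\int_0^t\int_\Omega|\nabla v_\varepsilon|^2\leq C\int_0^t\int_\Omega|f|^2+C\int_\Omega|g|^2$ uniformly in $\varepsilon$; undoing $v_\varepsilon=u_\varepsilon e^{-Nt}$ gives $(4.22)$. (For the restricted range where $2<k$ is close to $2$ one may need only the single corrector $\chi_2$ plus $\chi_5$; the second corrector $\chi_4$ is what pushes the argument through for larger $k$, and the $L^\infty$ bounds on $\nabla_y\chi_4,\nabla_y^2\chi_4$ following from the hypotheses $\nabla_y W,\nabla_y^2 W\in L^\infty$ on $\mathbb{T}^{d+1}$ are what make every remainder term legitimate.) The main obstacle is precisely the bookkeeping in this last paragraph: making sure that each integration by parts in $t$ — which a priori produces catastrophic powers $\varepsilon^{-1-k}$ — is paired with exactly the right number of integrations by parts in $x$ so that the net power of $\varepsilon$ on every surviving term is $\geq 0$, with the $O(1)$ terms being sign-definite or absorbable into $N$; this is the parabolic analogue of the ``non-self-similar scales'' difficulty and is where the smoothness hypotheses on $W$ are consumed.
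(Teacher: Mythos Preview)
Your treatment of the $W_2$ piece is correct and matches the paper. The gap is in your bookkeeping for the $W_1$ piece: you write $W_1^\varepsilon=\varepsilon^{-k}\partial_t(\chi_5^\varepsilon)$, but in fact $\partial_t(\chi_5^\varepsilon)=\varepsilon^{-k}(\partial_\tau\chi_5)^\varepsilon=\varepsilon^{-k}W_1^\varepsilon$, so the identity goes the other way: $W_1^\varepsilon=\varepsilon^{k}\partial_t(\chi_5^\varepsilon)$ and hence $\varepsilon^{-1}W_1^\varepsilon v_\varepsilon^2=\varepsilon^{\,k-1}\partial_t(\chi_5^\varepsilon)\,v_\varepsilon^2$. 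Since $k>2$ here, the coefficient $\varepsilon^{k-1}$ is already small, and a \emph{single} integration by parts in $t$ followed by the substitution $\partial_t v_\varepsilon=\Delta v_\varepsilon-Nv_\varepsilon+\varepsilon^{-1}W^\varepsilon v_\varepsilon+\tilde f$ leaves only terms carrying $\varepsilon^{k-1}$ or $\varepsilon^{k-2}$ (the worst being $\varepsilon^{k-2}\int|\nabla v_\varepsilon||v_\varepsilon|$ and $\varepsilon^{k-2}\int|v_\varepsilon|^2$), all of which are absorbable for $\varepsilon$ small. This is exactly the paper's estimate (4.23); no second corrector $\chi_4$ and no iterated integration by parts are needed.

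The machinery you reach for --- $\tilde\chi_5$, $\chi_4$, and a second iteration trading $\partial_t$ against two $y$-derivatives --- belongs to Theorem~3.6, where $\gamma=k-1$ and the potential is genuinely more singular. Importing it here would require $\nabla_y^2W,\nabla_y^3W\in L^\infty$, which Theorem~3.2 does not assume (only $\nabla_y W_1\in L^\infty$ is available, and that is precisely what guarantees $\chi_y,\nabla_y\chi_y\in L^\infty$ for the single integration by parts). Once you fix the sign of the exponent, your ``alarming'' powers disappear and the proof collapses to the short argument in the paper.
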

\begin{proof}
For every $y\in \mathbb{T}^{d}$, we introduce the corrector $\chi_y(y,\tau)$ such that
\begin{equation}
\chi_y(y,\tau)=\int_0^\tau W_1(y,s)ds.
\end{equation}
Then $\chi_y,\nabla_y \chi_y\in L^\infty(\mathbb{T}^{d+1})$ due to $\nabla_y W_1\in L^\infty(\mathbb{T}^{d+1})$, and  it is easy to see that
\begin{equation*}\begin{aligned}
\int_{\Omega_T}\varepsilon^{-1}W_1^\varepsilon v_\varepsilon^2&=\varepsilon^{k-1} \int_{\Omega_T} \partial_t\chi_y^\varepsilon\cdot v_\varepsilon^2=\varepsilon^{k-1}\int_{\Omega\times\{T\}} \chi_y^\varepsilon v_\varepsilon^2- 2\varepsilon^{k-1}\int_{\Omega_T} \chi_y^\varepsilon v_\varepsilon \partial_t v_\varepsilon\\
&=\varepsilon^{k-1}\int_{\Omega\times\{T\}} \chi_y^\varepsilon v_\varepsilon^2-2\varepsilon^{k-1}\int_{\Omega_T} \chi_y^\varepsilon v_\varepsilon\left(\Delta v_\varepsilon-N v_\varepsilon+\varepsilon^{-1}W^\varepsilon v_\varepsilon+\tilde{f}\right),
\end{aligned}\end{equation*}
which, after a direct computation, yields that
\begin{equation}\begin{aligned}
\left|\int_{\Omega_T}\varepsilon^{-1}W_1^\varepsilon v_\varepsilon^2\right|
\leq &C\varepsilon^{k-1}\int_{\Omega\times\{T\}} v_\varepsilon^2
+C\varepsilon^{k-1}\int_{\Omega_T} |\nabla v_\varepsilon|^2
+C\varepsilon^{k-2}\int_{\Omega_T} |\nabla v_\varepsilon|\cdot|v_\varepsilon|\\
&+CN\varepsilon^{k-2}\int_{\Omega_T} |v_\varepsilon|^2+C\varepsilon^{k-1}\int_{\Omega_T} |v_\varepsilon|\cdot |\tilde{f}|.
\end{aligned}\end{equation}
To proceed, let $\chi_2(y)\in W^{2,p}(\mathbb{T}^{d})$, with any $1<p<\infty$, be defined as
\begin{equation}\left\{\begin{aligned}
\Delta_y\chi_{2}=\mathcal{M}_\tau(W)(y)=W_2\  \text{ in }\ \mathbb{T}^{d},\\
\chi_{2}\text{ is 1-periodic with }\mathcal{M}_y(\chi_{2})=0,
\end{aligned}\right.\end{equation}

Then it is easy to see that
\begin{equation*}\begin{aligned}
\varepsilon^{-1}W_2^\varepsilon v_\varepsilon=\varepsilon \Delta_x\chi_2^\varepsilon \cdot v_\varepsilon=\operatorname{div}(v_\varepsilon \nabla_y \chi_2^\varepsilon)-\nabla v_\varepsilon \nabla_y \chi_2^\varepsilon,
\end{aligned}\end{equation*}
which implies that
\begin{equation}\begin{aligned}
\left|\int_{\Omega_T}\varepsilon^{-1}W_2^\varepsilon v_\varepsilon^2\right|\leq \frac{1}{16}||\nabla v_\varepsilon||_{L^2(\Omega_T)}^2+C ||v_\varepsilon||_{L^2(\Omega_T)}^2.
\end{aligned}\end{equation}
Recall that $v_\varepsilon$ satisfies
\begin{equation}\left\{\begin{aligned}
\partial_t v_\varepsilon-\Delta v_\varepsilon+\left(N-\varepsilon^{-1}W^\varepsilon\right)v_\varepsilon&=\tilde{f}, \quad\quad\text{in }\Omega\times (0,T),\\
v_\varepsilon&=0, \quad\quad\text{on }\partial\Omega\times (0,T),\\
v_\varepsilon&=g, \quad\quad\text{on }\Omega\times \{t=0\}.
\end{aligned}\right.\end{equation}

Therefore, choosing $N$ suitable large and $\varepsilon\leq \varepsilon_0$ with $\varepsilon_0$ depending only on $W$ and $d$, and   combining $(4.23)$-$(4.26)$, we have
\begin{equation}\begin{aligned}
||v_\varepsilon||_{L^\infty_TL^2_x}+||v_\varepsilon||_{L^2(\Omega_T)}+||\nabla v_\varepsilon||_{L^2(\Omega_T)}\leq C\left(||g||_{L^2(\Omega)}+||f||_{L^2(\Omega)}\right),
\end{aligned}\end{equation}
for a positive constant $C$ depending only on $||W||_{L^\infty(\mathbb{T}^{d+1})}$ and $T$, which yields the desired estimates $(4.21)$ after noting that $v_\varepsilon=u_\varepsilon e^{-Nt}$, and the uniform estimates $(4.21)$ follows directly from Lemma 4.1 if $\varepsilon>\varepsilon_0$.\end{proof}

\noindent\textbf{Proof of Theorem 3.2: the homogenization equation}.

With the same notations in Lemma 4.5, we have
\begin{equation}\begin{aligned}
&\varepsilon^{-1}W_1^\varepsilon v_\varepsilon=\varepsilon^{k-1} v_\varepsilon \partial_t\chi_y^\varepsilon=
\varepsilon^{k-1}\partial_t\left(v_\varepsilon\chi_y^\varepsilon\right)-\varepsilon^{k-1}\chi_y^\varepsilon \partial_t v_\varepsilon\\
=&\varepsilon^{k-1}\partial_t\left(v_\varepsilon\chi_y^\varepsilon\right)
-\varepsilon^{k-1}\chi_y^\varepsilon\left(\Delta v_\varepsilon-N v_\varepsilon+\varepsilon^{-1}W^\varepsilon v_\varepsilon+\tilde{f}\right)\\
=&\varepsilon^{k-1}\partial_t\left(v_\varepsilon\chi_y^\varepsilon\right)-
\varepsilon^{k-1}\Delta_x\left(v_\varepsilon\chi_y^\varepsilon\right)+\varepsilon^{k-2}\nabla_y \chi_y^\varepsilon \nabla v_\varepsilon\\
&+\varepsilon^{k-2}\operatorname{div}_x\left(v_\varepsilon \nabla_y \chi_y^\varepsilon\right)+\varepsilon^{k-1}\chi_y^\varepsilon\left(N v_\varepsilon-\varepsilon^{-1}W^\varepsilon v_\varepsilon-\tilde{f}\right),
\end{aligned}\end{equation}
and

\begin{equation}\begin{aligned}
&\varepsilon^{-1}{W}_2^\varepsilon v_\varepsilon=\varepsilon v_\varepsilon\Delta_x\chi_{2}^\varepsilon=
\varepsilon \Delta_x\left(v_\varepsilon\chi_{2}^\varepsilon\right)-2\nabla v_\varepsilon \nabla_y \chi_{2}^\varepsilon-\varepsilon \chi_{2}^\varepsilon \Delta_x v_\varepsilon\\
=&\varepsilon \Delta_x\left(v_\varepsilon\chi_{2}^\varepsilon\right)-2\nabla v_\varepsilon \nabla_y \chi_{2}^\varepsilon -\varepsilon \chi_{2}^\varepsilon\left(\partial_t v_\varepsilon+N v_\varepsilon-\varepsilon^{-1}W^\varepsilon v_\varepsilon-\tilde{f}\right)\\
=&\varepsilon \Delta_x\left(v_\varepsilon\chi_{2}^\varepsilon\right)-\varepsilon \partial_t\left(v_\varepsilon\chi_{2}^\varepsilon\right)-2\nabla v_\varepsilon \nabla_y\chi_{2}^\varepsilon
+\chi_{2}^\varepsilon W^\varepsilon v_\varepsilon -\varepsilon\chi_{2}^\varepsilon\left(N v_\varepsilon-\tilde{f}\right).
\end{aligned}\end{equation}

Denote $$\tilde{v}_\varepsilon =v_\varepsilon-\varepsilon^{k-1}\chi_y^\varepsilon v_\varepsilon+\varepsilon\chi_{2}^\varepsilon v_\varepsilon,$$ then $\tilde{v}_\varepsilon$ satisfies
\begin{equation}\left\{\begin{aligned}
\partial_t \tilde{v}_\varepsilon-\Delta \tilde{v}_\varepsilon+N\tilde{v}_\varepsilon&=f_{21}({v}_\varepsilon),& &\text{in }\Omega\times (0,T),\\
\tilde{v}_\varepsilon&=0,& &\text{on }\partial\Omega\times (0,T),\\
\tilde{v}_\varepsilon&=(1+\varepsilon\chi_2^\varepsilon)g,&\quad &\text{on }\Omega\times \{t=0\}.
\end{aligned}\right.\end{equation}
with $$\begin{aligned}f_{21}({v}_\varepsilon)=&\tilde{f}(1-\varepsilon^{k-1}\chi_y^\varepsilon+\varepsilon\chi_{2}^\varepsilon)-2\nabla v_\varepsilon \nabla_y\chi_{2}^\varepsilon
+\chi_{2}^\varepsilon W^\varepsilon v_\varepsilon
+\varepsilon^{k-2}\nabla_y \chi_y^\varepsilon \nabla v_\varepsilon\\
&+\varepsilon^{k-2}\operatorname{div}_x\left(v_\varepsilon \nabla_y \chi_y^\varepsilon\right)-\varepsilon^{k-2}\chi_y^\varepsilon W^\varepsilon v_\varepsilon\\
=&\tilde{f}(1-\varepsilon^{k-1}\chi_y^\varepsilon+\varepsilon\chi_{2}^\varepsilon)-2\nabla \tilde{v}_\varepsilon \nabla_y\chi_{2}^\varepsilon+2 \varepsilon\chi_{2}^\varepsilon\nabla v_\varepsilon \nabla_y\chi_{2}^\varepsilon+2v_\varepsilon |\nabla_y\chi_{2}^\varepsilon|^2+\chi_{2}^\varepsilon W^\varepsilon v_\varepsilon\\
&-\varepsilon^{k-1}\chi_y^\varepsilon\nabla v_\varepsilon\nabla_y\chi_{2}^\varepsilon-\varepsilon^{k-2}v_\varepsilon\nabla_y \chi_y^\varepsilon \nabla_y\chi_{2}^\varepsilon+\varepsilon^{k-2}\nabla_y \chi_y^\varepsilon \nabla v_\varepsilon\\
&\quad+\varepsilon^{k-2}\operatorname{div}_x\left(v_\varepsilon \nabla_y \chi_y^\varepsilon\right)-\varepsilon^{k-2}\chi_y^\varepsilon W^\varepsilon v_\varepsilon.\end{aligned}$$

\noindent Applying Lemma 4.1 after in view of Lemma 4.5, we have
\begin{equation}||\tilde{v}_\varepsilon||_{L^2(0,T;H^{1}_0(\Omega))}\leq C\left(||f||_{L^2(\Omega_T)}+||g||_{L^2(\Omega)}\right).\end{equation}
Moreover, it is easy to see that
$$||\partial_t \tilde{v}_\varepsilon||_{L^2(0,T;H^{-1}(\Omega))}\leq C\left(||f||_{L^2(\Omega_T)}+||g||_{L^2(\Omega)}\right),$$
which combines with $(4.31)$ and Lemma 4.3 implies that there exists $v_0\in L^2(0,T;H^{1}_0(\Omega))$, such that
\begin{equation}\begin{aligned}
\tilde{v}_\varepsilon \rightharpoonup v_0\text{ weakly in }L^2(0,T;H^{1}_0(\Omega));\\
\tilde{v}_\varepsilon \rightarrow v_0\text{ strongly in }L^2(\Omega_T);\\
v_\varepsilon \rightarrow v_0\text{ strongly in }L^2(\Omega_T).
\end{aligned}\end{equation}
To proceed, we note that \begin{equation}0=\mathcal{M}(|\nabla_y\chi_{2}|^2+\chi_{2} W_2)=\mathcal{M}(|\nabla_y\chi_{2}|^2+\chi_{2} W),\end{equation}
where the first equality is due to multiplying the equation $(4.24)$ by $\chi_{2}$ and integrating the resulting equation over $\mathbb{T}^{d},$
and the second equality follows from
$$\begin{aligned}
&\mathcal{M}(\chi_{2} W)=\mathcal{M}(\chi_{2} W_2)+\mathcal{M}(\chi_{2} W_1)\\
=&\mathcal{M}_y(\chi_{2} W_2)+\mathcal{M}_y(\chi_{2}\cdot \mathcal{M}_\tau (W_1))
=\mathcal{M}(\chi_{2} W_2),
\end{aligned}$$

To see the homogenization limit after noting that $\chi_2=\chi_2(y)$ independent of $\tau$, we rewrite the term $-2\nabla \tilde{v}_\varepsilon \nabla_y\chi_{2}^\varepsilon$ as below:
\begin{equation}\begin{aligned}
&-2\nabla \tilde{v}_\varepsilon \nabla_y\chi_{2}^\varepsilon=-2\varepsilon \operatorname{div}_x\left(\chi_{2}^\varepsilon \nabla \tilde{v}_\varepsilon\right)+2\varepsilon\chi_{2}^\varepsilon \Delta\tilde{v}_\varepsilon\\
=&-2\varepsilon \operatorname{div}_x\left(\chi_{2}^\varepsilon \nabla \tilde{v}_\varepsilon\right)+2\varepsilon\chi_{2}^\varepsilon\left(\partial_t \tilde{v}_\varepsilon+N \tilde{v}_\varepsilon-f_{21}({v}_\varepsilon)\right)\\
=&-2\varepsilon \operatorname{div}_x\left(\chi_{2}^\varepsilon \nabla \tilde{v}_\varepsilon\right)+2\varepsilon\partial_t\left(\chi_{2}^\varepsilon \tilde{v}_\varepsilon\right)+
2\varepsilon\chi_{2}^\varepsilon\left(N \tilde{v}_\varepsilon-f_{21}({v}_\varepsilon)\right)\\
&\quad\quad\rightarrow 0\quad\quad\text{ strongly in }\mathcal{D}'(\Omega_T).
\end{aligned}\end{equation}

Consequently, $v_0$ satisfies the homogenized equation:
\begin{equation}\left\{\begin{aligned}
\partial_t {v}_0-\Delta {v}_0+(N+\mathcal{M}(\chi_{2}W)){v}_0&=\tilde{f}, \quad\quad\text{in }\Omega\times (0,T),\\
{v}_0&=0, \quad\quad\text{on }\partial\Omega\times (0,T),\\
{v}_0&=g, \quad\quad\text{on }\Omega\times \{t=0\},
\end{aligned}\right.\end{equation}
which yields the desired limit equation $(3.4)$ after noting that $v_0=u_0 e^{-Nt}$.
\qed\\

Now we are ready to prove the second part of Theorem 3.2.\\
\noindent \textbf{Prove of Theorem 3.2: the convergence rates}.

By setting $$w_\varepsilon=\tilde{v}_\varepsilon -v_0=v_\varepsilon-\varepsilon^{k-1}v_\varepsilon\chi_y^\varepsilon+\varepsilon\chi_{2}^\varepsilon v_\varepsilon-v_0,$$ $w_\varepsilon$ satisfies the following parabolic equation:

\begin{equation}\left\{\begin{aligned}
\partial_t w_\varepsilon-\Delta w_\varepsilon+Nw_\varepsilon&=f_{22}({v}_\varepsilon),&\quad &\text{in }\Omega\times (0,T),\\
w_\varepsilon&=0,& &\text{on }\partial\Omega\times (0,T),\\
w_\varepsilon&=\varepsilon\chi_2^\varepsilon g,& &\text{on }\Omega\times \{t=0\},
\end{aligned}\right.\end{equation}
with $$\begin{aligned}f_{22}({v}_\varepsilon)=&(\varepsilon\chi_{2}^\varepsilon-\varepsilon^{k-1}\chi_y^\varepsilon)\tilde{f}-2\nabla \tilde{v}_\varepsilon \nabla_y\chi_{2}^\varepsilon+2 \varepsilon\chi_{2}^\varepsilon\nabla v_\varepsilon \nabla_y\chi_{2}^\varepsilon+2v_\varepsilon |\nabla_y\chi_{2}^\varepsilon|^2+\chi_{2}^\varepsilon W^\varepsilon v_\varepsilon\\
&+\mathcal{M}(\chi_{2}W){v}_0-\varepsilon^{k-1}\chi_y^\varepsilon\nabla v_\varepsilon\nabla_y\chi_{2}^\varepsilon-\varepsilon^{k-2}v_\varepsilon\nabla_y \chi_y^\varepsilon \nabla_y\chi_{2}^\varepsilon+\varepsilon^{k-2}\nabla_y \chi_y^\varepsilon \nabla v_\varepsilon\\
&\quad+\varepsilon^{k-2}\operatorname{div}_x\left(v_\varepsilon \nabla_y \chi_y^\varepsilon\right)-\varepsilon^{k-2}\chi_y^\varepsilon W^\varepsilon v_\varepsilon.\end{aligned}$$

First, it is easy to see that
\begin{equation}\begin{aligned}
\int_{\Omega_T}-2\nabla \tilde{v}_\varepsilon \nabla_y\chi_{2}^\varepsilon\cdot w_\varepsilon
\leq& C||w_\varepsilon||_{L^2(\Omega_T)}||\nabla w_\varepsilon||_{L^2(\Omega_T)}-\int_{\Omega_T}2\nabla v_0 \nabla_y\chi_{2}^\varepsilon\cdot w_\varepsilon\\
\leq& C||w_\varepsilon||_{L^2(\Omega_T)}||\nabla w_\varepsilon||_{L^2(\Omega_T)}+C\varepsilon ||\Delta v_0||_{L^2(\Omega_T)}|| w_\varepsilon||_{L^2(\Omega_T)}\\
&\quad+C\varepsilon ||\nabla v_0||_{L^2(\Omega_T)}|| \nabla w_\varepsilon||_{L^2(\Omega_T)}\\
\leq& \frac 1{16}|| \nabla w_\varepsilon||_{L^2(\Omega_T)}^2+C|| w_\varepsilon||_{L^2(\Omega_T)}^2+C\varepsilon^2 ||v_0||_{L^2(0,T;H^2(\Omega))}.
\end{aligned}\end{equation}

Next, due to $\mathcal{M}(|\nabla_y\chi_{2}|^2+\chi_{2} W_2)=\mathcal{M}(|\nabla_y\chi_{2}|^2+\chi_{2} W)=0$, we can introduce the corrector $\chi_{2-1}(y,\tau)$ satisfying
\begin{equation}\left\{\begin{aligned}
\partial_\tau\chi_{2-1}- \Delta_y\chi_{2-1}=2 |\nabla_y\chi_{2}|^2+\chi_{2} W+\mathcal{M}(\chi_{2}W)\text{ in }\mathbb{T}^{d+1},\\
\chi_{2-1}\text{ is 1-periodic in }(y,\tau)\quad\text{ with }\quad\mathcal{M}(\chi_{2-1})(\tau)=0.
\end{aligned}\right.\end{equation}
According to the parabolic theory,
$$\chi_{2-1}\in W^{2,1}_p(\mathbb{T}^{d+1}),\text{ for any }1<p<\infty,$$
and by embedding,
$$\chi_{2-1} \in C^{1+\alpha,\frac{1+\alpha}2}(\mathbb{T}^{d+1}), \text{ for any }0<\alpha<1.$$

Then, we have
\begin{equation}\begin{aligned}
E_5&=:\int_{\Omega_T}\left(2v_\varepsilon |\nabla_y\chi_{2}^\varepsilon|^2+\chi_{2}^\varepsilon W^\varepsilon  v_\varepsilon+\mathcal{M}(\chi_{2}W){v}_0\right)w_\varepsilon\\
&=\int_{\Omega_T}v_0 \left(2|\nabla_y\chi_{2}^\varepsilon|^2+\chi_{2}^\varepsilon W^\varepsilon  +\mathcal{M}(\chi_{2}W_2)\right)w_\varepsilon+\int_{\Omega_T}\left(2|\nabla_y\chi_{2}^\varepsilon|^2+\chi_{2}^\varepsilon W^\varepsilon \right)w_\varepsilon\cdot w_\varepsilon\\
&\quad+\int_{\Omega_T}\left(2|\nabla_y\chi_{2}^\varepsilon|^2+\chi_{2}^\varepsilon W^\varepsilon \right)(\varepsilon^{k-1}\chi_y^\varepsilon-\varepsilon\chi_{2}^\varepsilon) v_\varepsilon\cdot w_\varepsilon\\
&=:E_{51}+E_{52}+E_{53}.
\end{aligned}\end{equation}
It is easy to see that
\begin{equation}
|E_{52}|+|E_{53}|\leq C|| w_\varepsilon||_{L^2(\Omega_T)}^2+C\varepsilon^2 || v_\varepsilon||_{L^2(\Omega_T)}^2.
\end{equation}
For the first term $E_{51}$, a direct computation shows that
\begin{equation}\begin{aligned}E_{51}=&\varepsilon^k\int_{\Omega_T} \partial_t \chi_{2-1}^\varepsilon\cdot v_0 w_\varepsilon- \varepsilon^2\int_{\Omega_T}\Delta_x\chi_{2-1}^\varepsilon\cdot v_0 w_\varepsilon\\
\leq&-\varepsilon^k\int_{\Omega_T}\chi_{2-1}^\varepsilon v_0\partial_tw_\varepsilon
+C\varepsilon || w_\varepsilon||_{L^2(0,T;H^1(\Omega))}|| v_0||_{L^2(0,T;H^1(\Omega))}\\
&+C\varepsilon^k ||w_\varepsilon||_{L^\infty_TL^2_x}||v_0||_{L^\infty_TL^2_x}+C\varepsilon^k||w_\varepsilon||_{L^2(\Omega_T)}||\partial_t v_0||_{L^2(\Omega_T)}.
\end{aligned}\end{equation}
To proceed, we rewrite $$w_\varepsilon=w_{1,\varepsilon}+w_{2,\varepsilon}$$
with $w_{2,\varepsilon}$ defined by
\begin{equation*}\left\{\begin{aligned}
\partial_t w_{2,\varepsilon}-\Delta w_{2,\varepsilon}+Nw_{2,\varepsilon}&=\varepsilon^{k-2}\operatorname{div}_x\left(v_\varepsilon \nabla_y \chi_y^\varepsilon\right),&\quad &\text{in }\Omega\times (0,T),\\
{w}_{2,\varepsilon}&=0,& &\text{on }\partial\Omega\times (0,T),\\
{w}_{2,\varepsilon}&=0,& &\text{on }\Omega\times \{t=0\},
\end{aligned}\right.\end{equation*}
Then, we have $$\begin{aligned}
&||\partial_t w_{1,\varepsilon}||_{L^2(\Omega_T)}\leq C\left(||f||_{L^2(\Omega_T)}+||g||_{H^1(\Omega)}\right) \\
&||\partial_t w_{2,\varepsilon}||_{L^2(0,T;H^{-1}(\Omega))}\leq C\varepsilon^{k-2}\left(||f||_{L^2(\Omega_T)}+||g||_{H^1(\Omega)}\right),\end{aligned}$$
which implies that
\begin{equation}\begin{aligned}
-\varepsilon^k\int_{\Omega_T}\chi_{2-1}^\varepsilon v_0\partial_tw_\varepsilon=&-\varepsilon^k\int_{\Omega_T}\chi_{2-1}^\varepsilon v_0\partial_tw_{1,\varepsilon}-\varepsilon^k\int_{\Omega_T}\chi_{2-1}^\varepsilon v_0\partial_tw_{2,\varepsilon}\\
\leq& C\varepsilon^{k}||\chi_{2-1}^\varepsilon v_0||_{L^2(0,T;H^{1}_0(\Omega))} ||\partial_t w_{2,\varepsilon}||_{L^2(0,T;H^{-1}(\Omega))}
\\&\quad+C\varepsilon^k||v_0||_{L^2(\Omega_T)}||\partial_t w_{1,\varepsilon}||_{L^2(\Omega_T)}\\
\leq& C \max(\varepsilon^{2k-3},\varepsilon^k)\left(||f||_{L^2(\Omega_T)}^2+||g||_{H^1(\Omega)}^2\right).
\end{aligned}\end{equation}

Consequently, multiplying the equation $(4.36)$ by $w_\varepsilon$ after choosing $N$ suitable large and combining $(4.37)$-$(4.42)$ yields that
\begin{equation}||w_\varepsilon||_{L^\infty_TL^2_x}+||\nabla w_\varepsilon||_{L^2(\Omega_T)}+|| w_\varepsilon||_{L^2(\Omega_T)}\leq C\max\left(\varepsilon,\varepsilon^{k-2}\right)\left(||f||_{L^2(\Omega_T)}+||g||_{H^1(\Omega)}\right),\end{equation}
which completes the proof of Theorem 3.2.\qed

\section{Proofs of Theorems 3.3 and  3.4}
We first give the proof of Theorem 3.3.
\subsection{Proof of Theorem 3.3}
In this subsection, we give the proof of Theorem 3.3 and we always assume $1<k<2$, $\gamma=1$, $W\in L^\infty(\mathbb{T}^{d+1})$ and $\partial_\tau\left(W-\mathcal{M}_y(W)\right)\in L^\infty(\mathbb{T}^{d+1})$ with $\mathcal{M}(W)=0$. First we deduce a uniform estimates for $u_\varepsilon$.
\begin{lemma}[Uniform estimates]
Under the conditions in Theorem 3.3, there hold the following uniform estimates for $u_\varepsilon$:
\begin{equation}
||u_\varepsilon||_{L^\infty_TL^2_x}+||\nabla u_\varepsilon||_{L^2(\Omega_T)}+|| u_\varepsilon||_{L^2(\Omega_T)}\leq C\left(||g||_{L^2(\Omega)}+||f||_{L^2(\Omega)}\right),
\end{equation}
for a positive constant $C$ depending only on $W$, $d$, $T$ and $k$. Moreover, the constant $C$ will blow up as $k\rightarrow 1$ for $k\in(1,2)$.
\end{lemma}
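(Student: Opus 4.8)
The plan is to reduce the estimate, just as in the proofs of Lemmas 4.4 and 4.5, to controlling the single dangerous term $\int_{\Omega_t}\varepsilon^{-1}W^\varepsilon v_\varepsilon^2$ on the right-hand side of the basic energy identity for $v_\varepsilon=u_\varepsilon e^{-Nt}$, namely $\tfrac12\|v_\varepsilon(t)\|_{L^2(\Omega)}^2+\|\nabla v_\varepsilon\|_{L^2(\Omega_t)}^2+N\|v_\varepsilon\|_{L^2(\Omega_t)}^2=\tfrac12\|g\|_{L^2(\Omega)}^2+\int_{\Omega_t}\tilde f v_\varepsilon+\int_{\Omega_t}\varepsilon^{-1}W^\varepsilon v_\varepsilon^2$. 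It suffices to show that for $\varepsilon$ below a threshold $\varepsilon_0=\varepsilon_0(W,d,k)$ this term is at most $\tfrac14\|\nabla v_\varepsilon\|_{L^2(\Omega_t)}^2+\tfrac18\|v_\varepsilon(t)\|_{L^2(\Omega)}^2+C\|v_\varepsilon\|_{L^2(\Omega_t)}^2+C(\|f\|_{L^2(\Omega_T)}^2+\|g\|_{L^2(\Omega)}^2)$: absorbing the first two terms and applying Gronwall's inequality (which swallows $C\|v_\varepsilon\|_{L^2(\Omega_t)}^2$) gives $(5.1)$ for $\varepsilon\le\varepsilon_0$, while for $\varepsilon>\varepsilon_0$ the coefficient $\varepsilon^{-1}W^\varepsilon$ is uniformly bounded and Lemma 4.1 applies directly --- this is exactly the $\varepsilon_0$-splitting already used in the proof of Lemma 4.5.

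To bound $\int_{\Omega_t}\varepsilon^{-1}W^\varepsilon v_\varepsilon^2$ I would, following the two-scale expansion, split $W=W_1+W_2$ with $W_1=W-\mathcal{M}_y(W)$ and $W_2=\mathcal{M}_y(W)(\tau)$, so that $\mathcal{M}_y(W_1)(\tau)=0$ for every $\tau$ and, since $\mathcal{M}(W)=0$, also $\mathcal{M}_\tau(W_2)=0$. The contribution of $W_1$ is harmless and $k$-uniform: the corrector $\chi_3$ of $(3.8)$ has $\nabla_y\chi_3\in L^\infty(\mathbb{T}^{d+1})$ since $W_1\in L^\infty(\mathbb{T}^{d+1})$ (elliptic $W^{2,p}$-regularity in $y$, uniformly in $\tau$), and $\varepsilon^{-1}W_1^\varepsilon=\varepsilon\,\Delta_x(\chi_3^\varepsilon)$, so integrating by parts in $x$ turns $\int_{\Omega_t}\varepsilon^{-1}W_1^\varepsilon v_\varepsilon^2$ into $-2\int_{\Omega_t}(\nabla_y\chi_3)^\varepsilon v_\varepsilon\nabla v_\varepsilon\le\tfrac18\|\nabla v_\varepsilon\|_{L^2(\Omega_t)}^2+C\|v_\varepsilon\|_{L^2(\Omega_t)}^2$.

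The genuinely $k$-sensitive part is $\int_{\Omega_t}\varepsilon^{-1}W_2^\varepsilon v_\varepsilon^2$, where $W_2^\varepsilon=\mathcal{M}_y(W)(t/\varepsilon^k)$ is an $\varepsilon^k$-fast, zero-mean function of $t$ alone of amplitude $\varepsilon^{-1}$, which for $k<2$ cannot be removed by one corrector; here my plan is an iterated integration by parts in $t$. Put $\Theta(\tau)=\int_0^\tau\mathcal{M}_y(W)(s)\,ds$, which is $1$-periodic (by $\mathcal{M}(W)=0$) and bounded by $\|W\|_{L^\infty}$; then $W_2^\varepsilon=\varepsilon^k\partial_t(\Theta^\varepsilon)$ and $\int_{\Omega_t}\varepsilon^{-1}W_2^\varepsilon v_\varepsilon^2=\varepsilon^{k-1}\int_{\Omega_t}\partial_t(\Theta^\varepsilon)v_\varepsilon^2$. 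Integrating by parts in $t$ and inserting $\partial_t v_\varepsilon=\Delta v_\varepsilon-(N-\varepsilon^{-1}W^\varepsilon)v_\varepsilon+\tilde f$ produces boundary, source, and gradient terms all of size $\varepsilon^{k-1}$ (admissible once $\varepsilon\le\varepsilon_0$, as $k-1>0$), plus the recalcitrant term $-2\varepsilon^{k-2}\int_{\Omega_t}(\Theta W)^\varepsilon v_\varepsilon^2$. Splitting $W=W_1+W_2$ once more: $\Theta W_1$ still has zero $y$-average, so a fresh spatial corrector $\Delta_y\phi_1=\Theta W_1$ ($\nabla_y\phi_1\in L^\infty$ since $\Theta W_1\in L^\infty$) reduces its contribution, after integration by parts in $x$, to an $O(\varepsilon^{k-1})$ term; meanwhile $\Theta W_2=\Theta\Theta'=\tfrac12(\Theta^2)'$ is again zero-mean, so $(\Theta W_2)^\varepsilon=\tfrac12\varepsilon^k\partial_t((\Theta^2)^\varepsilon)$ and we are back to the same structure with $\varepsilon^{k-1}$ replaced by $\varepsilon^{2(k-1)}$ and $\Theta$ by $\Theta^2$. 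Iterating, at step $m$ one meets $c_m\varepsilon^{m(k-1)}\int_{\Omega_t}\partial_t((\Theta^m)^\varepsilon)v_\varepsilon^2$, which generates admissible $O(\varepsilon^{m(k-1)})$ terms, a $W_1$-cross term handled by $\Delta_y\phi_m=\Theta^m W_1$, and a $W_2$-cross term which is a multiple of $\varepsilon^{(m+1)(k-1)}\int_{\Omega_t}\partial_t((\Theta^{m+1})^\varepsilon)v_\varepsilon^2$ --- here the identity $\Theta^m\Theta'=\tfrac1{m+1}(\Theta^{m+1})'$ keeps every new term zero-mean, so that no non-oscillating $\varepsilon^{k-2}$-sized term ever survives. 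Since $k>1$, the $\varepsilon$-exponent rises by $k-1$ at each step, so after roughly $M=\lceil 1/(k-1)\rceil$ steps the surviving cross term carries a nonnegative power of $\varepsilon$ and is bounded outright by $C\|v_\varepsilon\|_{L^2(\Omega_t)}^2$.

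Collecting the pieces, choosing $\varepsilon_0$ small and $N$ large (both depending on $k$ through $M$ and through $\|\nabla_y\phi_m\|_{L^\infty}$), absorbing the gradient and time-slice terms, and running Gronwall gives $\|v_\varepsilon\|_{L^\infty_TL^2_x}+\|\nabla v_\varepsilon\|_{L^2(\Omega_T)}+\|v_\varepsilon\|_{L^2(\Omega_T)}\le C(\|g\|_{L^2(\Omega)}+\|f\|_{L^2(\Omega_T)})$, which is $(5.1)$ after reverting $v_\varepsilon=u_\varepsilon e^{-Nt}$. Since $M(k)$ and the corrector sizes $\|\nabla_y\phi_m\|_{L^\infty}$ grow unboundedly, and $\varepsilon_0\to0$, as $k\to1^+$, the constant $C$ blows up there, as claimed. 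The main obstacle is precisely this iteration: verifying that every term produced at every step lands in one of the admissible categories, that the algebraic identity above keeps all $W_2$-cross terms zero-mean (so that no $\varepsilon^{k-2}$-sized non-oscillating term is left behind), and that the exponent recursion terminates for $k>1$. The bookkeeping is elementary but heavy, and it uses only $W\in L^\infty(\mathbb{T}^{d+1})$ and $\mathcal{M}(W)=0$; the extra hypothesis $\partial_\tau(W-\mathcal{M}_y(W))\in L^\infty$ will enter only later, in the proof of the convergence rate.
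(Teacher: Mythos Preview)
Your proposal is correct and follows essentially the same approach as the paper. The paper uses the identical splitting $W=W_3+W_4$ (your $W_1,W_2$), the same spatial corrector $\chi_3$, and the same iterated time-correctors $\chi_{3-i}(\tau)=\int_0^\tau(\chi_{3-(i-1)}W_4)\,ds$ (your $\Theta^m$, up to the factor $1/m!$, since $\chi_{3-m}=\Theta^m/m!$); the zero-mean identity $\mathcal{M}_\tau(\chi_{3-i}W_4)=\frac{1}{(i+1)!}[\mathcal{M}_\tau(W_4)]^{i+1}=0$ is exactly your $\Theta^m\Theta'=\frac{1}{m+1}(\Theta^{m+1})'$. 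The only cosmetic difference is that the paper packages the iterated integrations by parts as a change of unknown $\tilde v_\varepsilon=v_\varepsilon\bigl(1+\sum_{i=1}^{I_k}(-1)^i\varepsilon^{i(k-1)}\chi_{3-i}^\varepsilon\bigr)$ and derives a PDE for $\tilde v_\varepsilon$, whereas you perform the same manipulations directly on the energy integral; your cross-term correctors $\phi_m$ are just $\Theta^m\chi_3$ in the paper's notation. Your observation that the extra hypothesis $\partial_\tau(W-\mathcal{M}_y(W))\in L^\infty$ is not needed for the uniform estimate is also correct and consistent with the paper, which states the regularity $(5.4)$ inside the lemma but only uses it later for the homogenized equation and the convergence rate.
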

\begin{proof}
First, inspired by the two-scale expansions, we rewrite the term $\varepsilon^{-1}W^\varepsilon v_\varepsilon$ as
\begin{equation}\varepsilon^{-1}W^\varepsilon v_\varepsilon=\varepsilon^{-1}(W^\varepsilon-\mathcal{M}_y(W)(t/\varepsilon^k)) v_\varepsilon+\varepsilon^{-1}\mathcal{M}_y(W)(t/\varepsilon^k) v_\varepsilon=:\varepsilon^{-1}W^\varepsilon_3v_\varepsilon+\varepsilon^{-1}W^\varepsilon_4v_\varepsilon.
\end{equation}
And note that
\begin{equation*}\begin{aligned}
\mathcal{M}_y(W_3)(\tau)=0\text{ for every }\tau\in \mathbb{T};\ \mathcal{M}_\tau(W_4)=\mathcal{M}(W)=0.
\end{aligned}\end{equation*}

Recall that, for every $\tau\in \mathbb{T}$, the corrector $\chi_3(y,\tau)$ is defined as,
\begin{equation}\left\{\begin{aligned}
\Delta_y \chi_3=W-\mathcal{M}_y(W)=W_3\text{ in }\mathbb{T}^{d},\\
\chi_3\text{ is 1-periodic},\ \mathcal{M}_y(\chi_3)(\tau)=0.
\end{aligned}\right.\end{equation}
By elliptic theory, we have
\begin{equation*}
\chi_{3}\in L^\infty(\mathbb{T};W^{2,p}(\mathbb{T}^{d})),\text{ for any }1<p<\infty
\end{equation*}
and by embedding,
$$\chi_{3} \in L^\infty(\mathbb{T};C^{1+\alpha}(\mathbb{T}^{d})), \text{ for any }0<\alpha<1.$$
Moreover, due to $\partial_\tau\left(W-\mathcal{M}_y(W)\right)\in L^\infty(\mathbb{T}^{d+1})$, we know that
\begin{equation}\begin{aligned}
&\partial_\tau\chi_{3}\in L^\infty(\mathbb{T};W^{2,p}(\mathbb{T}^{d})),\text{ for any }1<p<\infty;\\
&\partial_\tau\chi_{3} \in L^\infty(\mathbb{T};C^{1+\alpha}(\mathbb{T}^{d})), \text{ for any }0<\alpha<1.
\end{aligned}\end{equation}

Then it is easy to see that
\begin{equation}\begin{aligned}
\varepsilon^{-1}W^\varepsilon_3 v_\varepsilon=\varepsilon \Delta_x\chi_3^\varepsilon \cdot v_\varepsilon=\operatorname{div}(v_\varepsilon \nabla_y \chi_3^\varepsilon)-\nabla v_\varepsilon \nabla_y \chi_3^\varepsilon.
\end{aligned}\end{equation}
For the term $\varepsilon^{-1}W^\varepsilon_4v_\varepsilon$, we introduce the corrector $\chi_{3-1}(\tau)$
defined as
\begin{equation}\chi_{3-1}(\tau)=\int_0^\tau W_4(s)ds,\end{equation}
which implies
$$||\chi_{3-1}||_{L^\infty(\mathbb{T})}\leq ||W||_{L^\infty(\mathbb{T}^{d+1})}.$$
Moreover, due to  $$\mathcal{M}_\tau(\chi_{3-1}W_4)=\frac{1}{2}\left[\mathcal{M}_\tau(W_4)\right]^2=0,$$
we can introduce $\chi_{3-2}(\tau)\in L^\infty(\mathbb{T})$ defined as
$$\chi_{3-2}(\tau)=\int_0^\tau\left(\chi_{3-1}W_4\right)(s)ds,$$
with $$||\chi_{3-2}||_{L^\infty(\mathbb{T})}\leq ||W||^2_{L^\infty(\mathbb{T}^{d+1})}.$$
And similarly, we have
$$\mathcal{M}_\tau(\chi_{3-2}W_4)=\frac{1}{6}\left[\mathcal{M}_\tau(W_4)\right]^3=0,$$
then we can introduce $\chi_{3-3}(\tau)\in L^\infty(\mathbb{T})$ defined as
$$\chi_{3-3}(\tau)=\int_0^\tau\left(\chi_{3-2}W_4\right)(s)ds,$$
with $$||\chi_{3-3}||_{L^\infty(\mathbb{T})}\leq ||W||^3_{L^\infty(\mathbb{T}^{d+1})}.$$

By induction, for $i=2,3,\cdots$, we can introduce
$\chi_{3-i}(\tau)\in L^\infty(\mathbb{T})$ defined as

\begin{equation}\chi_{3-i}(\tau)=\int_0^\tau\left(\chi_{3-(i-1)}W_4\right)(s)ds,\end{equation}
with $\chi_{3-1}$ defined in $(5.6)$,
 $$||\chi_{3-i}||_{L^\infty(\mathbb{T})}\leq ||W||^i_{L^\infty(\mathbb{T}^{d+1})}$$
 and
\begin{equation}\mathcal{M}_\tau(\chi_{3-i}W_4)=\frac{1}{(i+1)!}\left[\mathcal{M}_\tau(W_4)\right]^{i+1}=0.\end{equation}

Then a direct computation shows that
\begin{equation}\begin{aligned}
&\varepsilon^{-1}W_4^\varepsilon v_\varepsilon=\varepsilon^{k-1}v_\varepsilon\partial_t\chi_{3-1}^\varepsilon =\varepsilon^{k-1}\partial_t (v_\varepsilon\chi_{3-1}^\varepsilon)-\varepsilon^{k-1}\chi_{3-1}^\varepsilon\partial_t v_\varepsilon\\
=&\varepsilon^{k-1}\partial_t (v_\varepsilon\chi_{3-1}^\varepsilon)-\varepsilon^{k-1}\chi_{3-1}^\varepsilon\left(\Delta v_\varepsilon +\varepsilon^{-1}W^\varepsilon v_\varepsilon-Nv_\varepsilon +\tilde{f}\right)\\
=&\varepsilon^{k-1}\partial_t (v_\varepsilon\chi_{3-1}^\varepsilon)-\varepsilon^{k-1}\Delta_x (v_\varepsilon\chi_{3-1}^\varepsilon)-\varepsilon^{k-2}\chi_{3-1}^\varepsilon W_3^\varepsilon v_\varepsilon-\varepsilon^{k-1}\chi_{3-1}^\varepsilon \tilde{f}\\
&\quad+N\varepsilon^{k-1}\chi_{3-1}^\varepsilon v_\varepsilon-\varepsilon^{k-2}\chi_{3-1}^\varepsilon W_4^\varepsilon v_\varepsilon.
\end{aligned}\end{equation}
According to $(5.3)$, we have
\begin{equation}\begin{aligned}
-\varepsilon^{k-2}\chi_{3-1}^\varepsilon W_3^\varepsilon v_\varepsilon&=-\varepsilon^{k}\chi_{3-1}^\varepsilon \Delta_x\chi_3^\varepsilon \cdot v_\varepsilon=-\varepsilon^{k}\Delta_x\left(\chi_{3-1}^\varepsilon\chi_3^\varepsilon\right)\cdot v_\varepsilon\\
&=-\varepsilon^{k-1}\operatorname{div}_x\left(\chi_{3-1}^\varepsilon v_\varepsilon\nabla_y\chi_3^\varepsilon\right)+\varepsilon^{k-1}\chi_{3-1}^\varepsilon\nabla_y\chi_3^\varepsilon \nabla v_\varepsilon.
\end{aligned}\end{equation}
and due to $(5.7)$, we have
\begin{equation}\begin{aligned}
&-\varepsilon^{k-2}\chi_{3-1}^\varepsilon W_4^\varepsilon v_\varepsilon=-\varepsilon^{2k-2}v_\varepsilon \partial_t \chi_{3-2}^\varepsilon
=-\varepsilon^{2k-2}\partial_t(v_\varepsilon  \chi_{3-2}^\varepsilon)+\varepsilon^{2k-2} \chi_{3-2}^\varepsilon\partial_tv_\varepsilon\\
=&-\varepsilon^{2k-2}\partial_t(v_\varepsilon  \chi_{3-2}^\varepsilon)+\varepsilon^{2k-2} \chi_{3-2}^\varepsilon\left(\Delta v_\varepsilon +\varepsilon^{-1}W^\varepsilon v_\varepsilon-Nv_\varepsilon +\tilde{f}\right)\\
=&-\varepsilon^{2k-2}\partial_t(v_\varepsilon  \chi_{3-2}^\varepsilon)+\varepsilon^{2k-2}\Delta(v_\varepsilon  \chi_{3-2}^\varepsilon)+\varepsilon^{2k-3}\chi_{3-2}^\varepsilon W_3^\varepsilon v_\varepsilon\\
&\quad\quad-\varepsilon^{2k-2} \chi_{3-2}^\varepsilon\left(Nv_\varepsilon -\tilde{f}\right)+\varepsilon^{2k-3}\chi_{3-2}^\varepsilon W_4^\varepsilon v_\varepsilon.
\end{aligned}\end{equation}
By induction, for $i\in \mathbb{N}_+$ with $i\geq 2$ we have

\begin{equation}\begin{aligned}
&(-1)^i\varepsilon^{i(k-1)-1}\chi_{3-i}^\varepsilon W_3^\varepsilon v_\varepsilon\\
=&(-1)^i\varepsilon^{i(k-1)+1}\chi_{3-i}^\varepsilon v_\varepsilon\Delta_x\chi_3^\varepsilon =(-1)^i\varepsilon^{i(k-1)+1}\Delta_x\left(\chi_{3-i}^\varepsilon\chi_3^\varepsilon\right)\cdot v_\varepsilon\\
=&(-1)^i\varepsilon^{i(k-1)}\operatorname{div}_x\left(\chi_{3-i}^\varepsilon v_\varepsilon\nabla_y\chi_3^\varepsilon\right)+(-1)^{i+1}\varepsilon^{i(k-1)}\chi_{3-i}^\varepsilon\nabla_y\chi_3^\varepsilon \nabla v_\varepsilon,
\end{aligned}\end{equation}
and
\begin{equation}\begin{aligned}
&(-1)^i\varepsilon^{i(k-1)-1}\chi_{3-i}^\varepsilon W_4^\varepsilon v_\varepsilon
=(-1)^i\varepsilon^{(i+1)(k-1)}v_\varepsilon \partial_t \chi_{3-(i+1)}^\varepsilon\\
=&(-1)^i\varepsilon^{(i+1)(k-1)}\partial_t(v_\varepsilon  \chi_{3-(i+1)}^\varepsilon)+(-1)^{i+1}\varepsilon^{(i+1)(k-1)} \chi_{3-(i+1)}^\varepsilon\partial_tv_\varepsilon\\
=&(-1)^i\varepsilon^{(i+1)(k-1)}\partial_t(v_\varepsilon  \chi_{3-(i+1)}^\varepsilon)+(-1)^{i+1}\varepsilon^{(i+1)(k-1)} \chi_{3-(i+1)}^\varepsilon\left(\Delta v_\varepsilon +\varepsilon^{-1}W^\varepsilon v_\varepsilon-Nv_\varepsilon +\tilde{f}\right)\\
=&(-1)^i\varepsilon^{(i+1)(k-1)}\partial_t(v_\varepsilon  \chi_{3-(i+1)}^\varepsilon)+(-1)^{i+1}\varepsilon^{(i+1)(k-1)}\Delta(v_\varepsilon  \chi_{3-(i+1)}^\varepsilon)\\
&\quad+(-1)^{i+1}\varepsilon^{(i+1)(k-1)-1}\chi_{3-(i+1)}^\varepsilon W_3^\varepsilon v_\varepsilon
+(-1)^{i}\varepsilon^{(i+1)(k-1)} \chi_{3-(i+1)}^\varepsilon\left(Nv_\varepsilon -\tilde{f}\right)\\
&\quad\quad\quad+(-1)^{i+1}\varepsilon^{(i+1)(k-1)-1}\chi_{3-(i+1)}^\varepsilon W_4^\varepsilon v_\varepsilon.
\end{aligned}\end{equation}

In conclusion, we define
\begin{equation}
\tilde{v}_\varepsilon=:v_\varepsilon+\sum_{i=1}^{I_k}(-1)^i\varepsilon^{i(k-1)}\chi_{3-i}^\varepsilon v_\varepsilon,
\end{equation}
where $I_k$ is the smallest positive integer such that
\begin{equation}
I_k(k-1)\geq 1+(k-1)=k.
\end{equation}

Note that $I_k\rightarrow |\log k|^{-1}$ when $k>1$ and $k \rightarrow 1$. Therefore, combining $(5.9)$-$(5.14)$ yields that $\tilde{v}_\varepsilon$ satisfies the following parabolic equation:
\begin{equation}\left\{\begin{aligned}
\partial_t \tilde{v}_\varepsilon-\Delta \tilde{v}_\varepsilon+N\tilde{v}_\varepsilon&=f_{31}(v_\varepsilon),&\quad &\text{in }\Omega\times (0,T),\\
\tilde{v}_\varepsilon&=0,& &\text{on }\partial\Omega\times (0,T),\\
\tilde{v}_\varepsilon&=g.& &\text{on }\Omega\times \{t=0\},
\end{aligned}\right.\end{equation}
with
\begin{equation*}\begin{aligned}
f_{31}(v_\varepsilon)=&:\operatorname{div}(v_\varepsilon \nabla_y \chi_3^\varepsilon)-\nabla v_\varepsilon \nabla_y\chi_3^\varepsilon+\sum_{i=1}^{I_k}(-1)^i\varepsilon^{i(k-1)}\operatorname{div}_x\left(\chi_{3-i}^\varepsilon v_\varepsilon\nabla_y\chi_3^\varepsilon\right)\\
&\quad+\sum_{i=1}^{I_k}(-1)^{i+1}\varepsilon^{i(k-1)}\chi_{3-i}^\varepsilon\nabla_y\chi_3^\varepsilon \nabla v_\varepsilon+\tilde{f}+\sum_{i=1}^{I_k}(-1)^i\varepsilon^{i(k-1)}\chi_{3-i}^\varepsilon\tilde{f}\\
&\quad\quad+(-1)^{I_k+1}\varepsilon^{(I_k+1)(k-1)-1}\chi_{3-(I_k+1)}^\varepsilon W_4^\varepsilon v_\varepsilon.
\end{aligned}\end{equation*}

Since $1<k<2$ is a fixed positive constant, we can choose $\varepsilon$ small enough depending only on $k-1$ as well as $W$ and the universal constant $N$ suitably large, such that the following energy estimates hold true:
\begin{equation}
||\tilde{v}_\varepsilon||_{L^\infty_TL^2_x}+||\nabla \tilde{v}_\varepsilon||_{L^2(\Omega_T)}+|| \tilde{v}_\varepsilon||_{L^2(\Omega_T)}\leq C(||g||_{L^2(\Omega)}+||f||_{L^2(\Omega_T)})+\frac14 ||v_\varepsilon||_{L^2_T H^1_x}.
\end{equation}

Note that, for any $i\in\mathbb{N}_+$, the corrector $\chi_{3-i}(\tau)$ is independent of $y$, then for $k\in(1,2)$ fixed, we can choose $\varepsilon$ small enough such that
\begin{equation}
||v_\varepsilon||_{L^2_T H^1_x}\leq \frac3 2 ||\tilde{v}_\varepsilon||_{L^2_T H^1_x}.
\end{equation}

Consequently, combining $(5.17)$-$(5.18)$ yields the desired estimates $(5.1)$ for $\varepsilon$ small enough. And the estimate $(5.11)$ is trivial from classical parabolic theory for $\varepsilon$ having a positive lower bound after noting that $v_\varepsilon=u_\varepsilon e^{-Nt}$.

In fact, if $k\in(1,2)$ and $k\rightarrow 1$, then we may need to choose $\varepsilon\leq \varepsilon_1$ with

$$\varepsilon_1=\frac{\exp(-|k-1|^{-2})}{C||W||_{L^\infty(\mathbb{T}^{d+1})}},$$
which implies that the uniform estimates obtained in $(5.1)$ will blow up as $k\rightarrow 1$. Moreover, for the case $k=1$, the heuristic illusion in Section 2 implies that we additionally need to assume that $\mathcal{M}_y(W)(\tau)=0$ for every $\tau\in\mathbb{T}$.

\end{proof}

\noindent \textbf{Proof of Theorem 3.3: the homogenization equation.}

Form $(5.16)$ and Lemma 5.1, we have the following uniform estimates:
\begin{equation}\begin{aligned}
||\tilde{v}_\varepsilon||_{L^2(0,T;H^1_0(\Omega_T))}+
||\partial_t \tilde{v}_\varepsilon||_{L^2(0,T;H^{-1}(\Omega))}\leq C,
\end{aligned}\end{equation}
which after using Lemma 4.3 implies that there exists $v_0\in L^2(0,T;H_0^1(\Omega))$ such that
\begin{equation}\begin{aligned}
\tilde{v}_\varepsilon\rightharpoonup v_0\text{ weakly in }L^2(0,T;H_0^1(\Omega));\\
\tilde{v}_\varepsilon\rightarrow v_0\text{ strongly in }L^2(\Omega_T).
\end{aligned}\end{equation}
In view of the expression $(5.14)$ of $\tilde{v}_\varepsilon$, we also have
\begin{equation}\begin{aligned}
{v}_\varepsilon\rightharpoonup v_0\text{ weakly in }L^2(0,T;H_0^1(\Omega));\\
{v}_\varepsilon\rightarrow v_0\text{ strongly in }L^2(\Omega_T).
\end{aligned}\end{equation}

In order to obtain the homogenization limit, we need to rewrite the term $\varepsilon^{-1}W^\varepsilon_3 v_\varepsilon$ in a more suitable form. Actually, after a direct computation, we have
\begin{equation}\begin{aligned}
\varepsilon^{-1}W^\varepsilon_3 v_\varepsilon=\varepsilon v_\varepsilon\Delta_x\chi_3^\varepsilon= \Delta_x(\varepsilon\chi_3^\varepsilon v_\varepsilon)-\varepsilon\operatorname{div}_x(\chi_3^\varepsilon\nabla v_\varepsilon)-\nabla_y\chi_3^\varepsilon\nabla v_\varepsilon.\\
\end{aligned}\end{equation}

Moreover, for every $\tau\in\mathbb{T}$, we introduce the corrector $\widehat{\chi_{3-1}}(y,\tau)$ solving the following equation
\begin{equation}\left\{\begin{aligned}
\Delta_y {\widehat{\chi_{3-1}}}=\chi_3\quad\quad\quad\text{ in }\quad\quad\quad\mathbb{T}^{d},\\
{\widehat{\chi_{3-1}}}\text{ is 1-periodic},\ \mathcal{M}_y({\widehat{\chi_{3-1}}})(\tau)=0.
\end{aligned}\right.\end{equation}
By elliptic theory, we have
\begin{equation*}
\widehat{\chi_{3-1}}\in L^\infty(\mathbb{T};W^{2,p}(\mathbb{T}^{d})),\text{ for any }1<p<\infty
\end{equation*}
and by embedding,
$$\widehat{\chi_{3-1}}\in L^\infty(\mathbb{T};C^{1+\alpha}(\mathbb{T}^{d})), \text{ for any }0<\alpha<1.$$
Moreover, according to $(5.4)$, we have
\begin{equation*}\begin{aligned}
&\partial_\tau\widehat{\chi_{3-1}}\in L^\infty(\mathbb{T};W^{2,p}(\mathbb{T}^{d})),\text{ for any }1<p<\infty;\\
&\partial_\tau\widehat{\chi_{3-1}} \in L^\infty(\mathbb{T};C^{1+\alpha}(\mathbb{T}^{d})), \text{ for any }0<\alpha<1.
\end{aligned}\end{equation*}

Then it is easy to see that
\begin{equation}\begin{aligned}
-\nabla_y\chi_3^\varepsilon\nabla v_\varepsilon=&-\varepsilon\operatorname{div}_x(\chi_3^\varepsilon\nabla v_\varepsilon)+\varepsilon\chi_3^\varepsilon\Delta v_\varepsilon\\
=&-\varepsilon\operatorname{div}_x(\chi_3^\varepsilon\nabla v_\varepsilon)+\varepsilon \partial_t(\chi_3^\varepsilon v_\varepsilon)-\varepsilon^{1-k}v_\varepsilon\partial_\tau \chi_3^\varepsilon\\
&\quad-\chi_3^\varepsilon W^\varepsilon v_\varepsilon-\varepsilon \chi_3^\varepsilon \tilde{f}+\varepsilon N\chi_3^\varepsilon v_\varepsilon,
\end{aligned}\end{equation}
where
\begin{equation}\begin{aligned}-\varepsilon^{1-k}v_\varepsilon\partial_\tau \chi_3^\varepsilon=-\varepsilon^{3-k}v_\varepsilon\Delta_x \partial_\tau \widehat{\chi_{3-1}}^\varepsilon =-\varepsilon^{2-k}\operatorname{div}_x(v_\varepsilon\nabla_y \partial_\tau \widehat{\chi_{3-1}}^\varepsilon)+\varepsilon^{2-k}\nabla v_\varepsilon\nabla_y \partial_\tau \widehat{\chi_{3-1}}^\varepsilon.\end{aligned}\end{equation}

Then, \begin{equation}\tilde{\tilde{v}}_\varepsilon=:v_\varepsilon+\sum_{i=1}^{I_k}(-1)^i\varepsilon^{i(k-1)}\chi_{3-i}^\varepsilon v_\varepsilon+\varepsilon\chi_3^\varepsilon v_\varepsilon\end{equation} satisfies the following equation:
\begin{equation}\left\{\begin{aligned}
\partial_t \tilde{\tilde{v}}_\varepsilon-\Delta \tilde{\tilde{v}}_\varepsilon+N\tilde{\tilde{v}}_\varepsilon&=f_{32}(v_\varepsilon),& &\text{in }\Omega\times (0,T),\\
\tilde{\tilde{v}}_\varepsilon&=0,& &\text{on }\partial\Omega\times (0,T),\\
\tilde{\tilde{v}}_\varepsilon&=g+\varepsilon\chi_3^\varepsilon g,&\quad &\text{on }\Omega\times \{t=0\},
\end{aligned}\right.\end{equation}
with $$\begin{aligned}f_{32}(v_\varepsilon)=&\sum_{i=1}^{I_k}(-1)^i\varepsilon^{i(k-1)}\operatorname{div}_x\left(\chi_{3-i}^\varepsilon v_\varepsilon\nabla_y\chi_3^\varepsilon\right)+\sum_{i=1}^{I_k}(-1)^{i+1}\varepsilon^{i(k-1)}\chi_{3-i}^\varepsilon\nabla_y\chi_3^\varepsilon \nabla v_\varepsilon\\
&\quad-2\varepsilon\operatorname{div}_x(\chi_3^\varepsilon\nabla v_\varepsilon)-\chi_3^\varepsilon W^\varepsilon v_\varepsilon+\tilde{f}+\sum_{i=1}^{I_k}(-1)^i\varepsilon^{i(k-1)}\chi_{3-i}^\varepsilon\tilde{f}-\varepsilon \chi_3^\varepsilon \tilde{f}\\
&\quad\quad-\varepsilon^{2-k}\operatorname{div}_x(v_\varepsilon\nabla_y \partial_\tau \widehat{\chi_{3-1}}^\varepsilon)+\varepsilon^{2-k}\nabla v_\varepsilon\nabla_y \partial_\tau \widehat{\chi_{3-1}}^\varepsilon+2\varepsilon \partial_t(\chi_3^\varepsilon v_\varepsilon)\\
&\quad\quad\quad+(-1)^{I_k+1}\varepsilon^{(I_k+1)(k-1)-1}\chi_{3-(I_k+1)}^\varepsilon W_4^\varepsilon v_\varepsilon.
\end{aligned}$$

In view of $(5.26)$ and Lemma 4.2, we know that
\begin{equation}\begin{aligned}
\tilde{\tilde{v}}_\varepsilon\rightharpoonup v_0\text{ weakly in }L^2(0,T;H_0^1(\Omega));\\
\tilde{\tilde{v}}_\varepsilon\rightarrow v_0\text{ strongly in }L^2(\Omega_T).
\end{aligned}\end{equation}
Consequently, $v_0$ satisfies the homogenized equation:
\begin{equation}\left\{\begin{aligned}
\partial_t {v}_0-\Delta {v}_0+\left(N+\mathcal{M}(\chi_3 W)\right)v_0&=\tilde{f},& &\quad\text{in }\Omega\times (0,T),\\
{v}_0&=0,& &\quad\text{on }\partial\Omega\times (0,T),\\
{v}_0&=g,& &\quad\text{on }\Omega\times \{t=0\},
\end{aligned}\right.\end{equation}
which completes the first part of Theorem 3.2.
\qed\\

Now, we are ready to bound the error.\\
\noindent \textbf{Proof of Theorem 3.3: the convergence rates}.\\

\noindent To obtain the convergence rates, we first rewrite the term $\varepsilon^{-1}W^\varepsilon_3 v_\varepsilon$ as below:
\begin{equation*}\begin{aligned}
\varepsilon^{-1}W_3^\varepsilon\cdot v_\varepsilon=\varepsilon \Delta_x\chi_3^\varepsilon v_\varepsilon= \Delta_x(\varepsilon\chi_3^\varepsilon v_\varepsilon)-\varepsilon \chi_3^\varepsilon \Delta_x v_\varepsilon-2\nabla_y\chi_3^\varepsilon\nabla v_\varepsilon\\
\end{aligned}\end{equation*}
and due to $(5.2)$, we have
\begin{equation*}\begin{aligned}
&-\varepsilon \chi_3^\varepsilon \Delta_x v_\varepsilon=-\varepsilon \chi_3^\varepsilon(\partial_t v_\varepsilon+Nv_\varepsilon-\varepsilon^{-1}W^\varepsilon v_\varepsilon-\tilde{f})\\
=&-\varepsilon \partial_t(\chi_3^\varepsilon v_\varepsilon)+\varepsilon^{1-k}v_\varepsilon\partial_\tau \chi_3^\varepsilon+\chi_3^\varepsilon W^\varepsilon v_\varepsilon+\varepsilon \chi_3^\varepsilon \tilde{f}-\varepsilon N\chi_3^\varepsilon v_\varepsilon\\
=&-\varepsilon \partial_t(\chi_3^\varepsilon v_\varepsilon)+\chi_3^\varepsilon W^\varepsilon v_\varepsilon+\varepsilon \chi_3^\varepsilon \tilde{f}-\varepsilon N\chi_3^\varepsilon v_\varepsilon\\
&\quad\quad+\varepsilon^{2-k}\operatorname{div}_x(v_\varepsilon\nabla_y \partial_\tau \widehat{\chi_{3-1}}^\varepsilon)-\varepsilon^{2-k}\nabla v_\varepsilon\nabla_y \partial_\tau \widehat{\chi_{3-1}}^\varepsilon.
\end{aligned}\end{equation*}

Then with $\tilde{\tilde{v}}_\varepsilon$ defined in $(5.26)$, $$w_\varepsilon=\tilde{\tilde{v}}_\varepsilon-v_0$$  satisfies
\begin{equation}\left\{\begin{aligned}
\partial_t w_\varepsilon-\Delta w_\varepsilon+Nw_\varepsilon&=f_{33}(v_\varepsilon),& \quad&\text{in }\Omega\times (0,T),\\
w_\varepsilon&=0,& &\text{on }\partial\Omega\times (0,T),\\
w_\varepsilon&=\varepsilon\chi_3^\varepsilon g,& &\text{on }\Omega\times \{t=0\},
\end{aligned}\right.\end{equation}
with \begin{equation}\begin{aligned}f_{33}(v_\varepsilon)=&\sum_{i=1}^{I_k}(-1)^i\varepsilon^{i(k-1)}\chi_{3-i}^\varepsilon\tilde{f}
+\varepsilon \chi_3^\varepsilon \tilde{f}-2\nabla_y\chi_3^\varepsilon\nabla v_\varepsilon+\chi_3^\varepsilon W_\varepsilon^\varepsilon v_\varepsilon\\
&+\varepsilon^{2-k}\operatorname{div}_x(v_\varepsilon\nabla_y \partial_\tau \widehat{\chi_{3-1}}^\varepsilon)-\varepsilon^{2-k}\nabla v_\varepsilon\nabla_y \partial_\tau \widehat{\chi_{3-1}}^\varepsilon+\mathcal{M}(\chi_3 W)v_0\\
&\quad\sum_{i=1}^{I_k}(-1)^i\varepsilon^{i(k-1)}\operatorname{div}_x\left(\chi_{3-i}^\varepsilon v_\varepsilon\nabla_y\chi_3^\varepsilon\right)+\sum_{i=1}^{I_k}(-1)^{i+1}\varepsilon^{i(k-1)}\chi_{3-i}^\varepsilon\nabla_y\chi_3^\varepsilon \nabla v_\varepsilon\\
&\quad\quad\quad+(-1)^{I_k+1}\varepsilon^{(I_k+1)(k-1)-1}\chi_{3-(I_k+1)}^\varepsilon W_4^\varepsilon v_\varepsilon.
\end{aligned}\end{equation}

To proceed, rewrite $w_\varepsilon=w_{3,\varepsilon}+w_{4,\varepsilon}$, with $w_{4,\varepsilon}$ defied as
\begin{equation}\left\{\begin{aligned}
\partial_t w_{4,\varepsilon}-\Delta w_{4,\varepsilon}+Nw_{4,\varepsilon}&=f_{34}(v_\varepsilon),&\quad &\text{in }\Omega\times (0,T),\\
w_{4,\varepsilon}&=0,& &\text{on }\partial\Omega\times (0,T),\\
w_{4,\varepsilon}&=0,& &\text{on }\Omega\times \{t=0\},
\end{aligned}\right.\end{equation}
with \begin{equation*}\begin{aligned}f_{34}(v_\varepsilon)=&\sum_{i=1}^{I_k}(-1)^i\varepsilon^{i(k-1)}\chi_{3-i}^\varepsilon\tilde{f}
+\varepsilon \chi_3^\varepsilon \tilde{f}
+\varepsilon^{2-k}\operatorname{div}_x(v_\varepsilon\nabla_y \partial_\tau \widehat{\chi_{3-1}}^\varepsilon)\\
&-\varepsilon^{2-k}\nabla v_\varepsilon\nabla_y \partial_\tau \widehat{\chi_{3-1}}^\varepsilon+
\sum_{i=1}^{I_k}(-1)^i\varepsilon^{i(k-1)}\operatorname{div}_x\left(\chi_{3-i}^\varepsilon v_\varepsilon\nabla_y\chi_3^\varepsilon\right)\\
&+\sum_{i=1}^{I_k}(-1)^{i+1}\varepsilon^{i(k-1)}\chi_{3-i}^\varepsilon\nabla_y\chi_3^\varepsilon \nabla v_\varepsilon+(-1)^{I_k+1}\varepsilon^{(I_k+1)(k-1)-1}\chi_{3-(I_k+1)}^\varepsilon W_4^\varepsilon v_\varepsilon.
\end{aligned}\end{equation*}
Then, we have
\begin{equation}\begin{aligned}
&||\partial_t w_{3,\varepsilon}||_{L^2(\Omega_T)}\leq C\left(||f||_{L^2(\Omega_T)}+||g||_{H^1(\Omega)}\right);\\
&||\partial_t w_{4,\varepsilon}||_{L^2(0,T;H^{-1}(\Omega))}\leq C\max(\varepsilon^{2-k},\varepsilon^{k-1})\left(||f||_{L^2(\Omega_T)}+||g||_{H^1(\Omega)}\right),\end{aligned}\end{equation}
Moreover, we have the following estimates for $w_{4,\varepsilon}$:
\begin{equation}
||w_{4,\varepsilon}||_{L^\infty_TL^2_x}+||\nabla w_{4,\varepsilon}||_{L^2(\Omega_T)}+|| w_{4,\varepsilon}||_{L^2(\Omega_T)}\leq C\max(\varepsilon^{2-k},\varepsilon^{k-1})\left(||f||_{L^2(\Omega_T)}+||g||_{H^1(\Omega)}\right).
\end{equation}

To estimate $w_{3,\varepsilon}$, we note that $w_{3,\varepsilon}$ satisfies
\begin{equation}\left\{\begin{aligned}
\partial_t w_{3,\varepsilon}-\Delta w_{3,\varepsilon}+Nw_{3,\varepsilon}&=f_{35}(v_\varepsilon),&\quad &\text{in }\Omega\times (0,T),\\
w_{3,\varepsilon}&=0,& &\text{on }\partial\Omega\times (0,T),\\
w_{3,\varepsilon}&=\varepsilon\chi_3^\varepsilon g,& &\text{on }\Omega\times \{t=0\},
\end{aligned}\right.\end{equation}
with
 \begin{equation*}\begin{aligned}f_{35}(v_\varepsilon)=-2\nabla_y\chi_3^\varepsilon\nabla v_\varepsilon+\chi_3^\varepsilon W^\varepsilon v_\varepsilon+\mathcal{M}(\chi_3 W)v_0.
\end{aligned}\end{equation*}
Firstly, a direct computation shows that
\begin{equation}\begin{aligned}
-\int_{\Omega_T}2\nabla_y\chi_3^\varepsilon\nabla v_\varepsilon\cdot w_{3,\varepsilon}=&-\int_{\Omega_T}2\nabla_y\chi_3^\varepsilon\nabla\left(w_{4,\varepsilon} -\sum_{i=1}^{I_k}(-1)^i\varepsilon^{i(k-1)}\chi_{3-i}^\varepsilon v_\varepsilon\right)\cdot w_{3,\varepsilon}\\
&+2\int_{\Omega_T}\nabla_y\chi_3^\varepsilon\nabla\left(\varepsilon\chi_3^\varepsilon v_\varepsilon-v_0\right)\cdot w_{3,\varepsilon}-\int_{\Omega_T}2\nabla_y\chi_3^\varepsilon\nabla w_{3,\varepsilon}\cdot w_{3,\varepsilon}\\
=&:E_6+E_7+E_8.
\end{aligned}\end{equation}
According to $(5.34)$ after noting that $\chi_{3-i}$ being independent of $y$, it is easy to see that
\begin{equation}
|E_6|\leq C\max(\varepsilon^{2-k},\varepsilon^{k-1})\left(||f||_{L^2(\Omega_T)}+||g||_{H^1(\Omega)}\right)||w_{3,\varepsilon}||_{L^2(\Omega_T)},
\end{equation}and

\begin{equation}
|E_8|\leq \frac 1{16}||\nabla w_{3,\varepsilon}||_{L^2(\Omega_T)}^2+C||w_{3,\varepsilon}||_{L^2(\Omega_T)}^2,
\end{equation}
and
\begin{equation}\begin{aligned}
&E_{72}=:-2\int_{\Omega_T}\nabla_y\chi_3^\varepsilon\nabla v_0\cdot w_{3,\varepsilon}\\
\leq &C\varepsilon||\Delta v_0||_{L^2(\Omega_T)}||w_{3,\varepsilon}||_{L^2(\Omega_T)}
+C\varepsilon||\nabla v_0||_{L^2(\Omega_T)}||\nabla w_{3,\varepsilon}||_{L^2(\Omega_T)}.
\end{aligned}\end{equation}
For the estimate of $E_{71}$, a direct computation yields that
\begin{equation}\begin{aligned}
E_{71}=:2\int_{\Omega_T}\nabla_y\chi_3^\varepsilon\nabla\left(\varepsilon\chi_3^\varepsilon v_\varepsilon\right)\cdot w_{3,\varepsilon}=2\int_{\Omega_T}|\nabla_y\chi_3^\varepsilon|^2 v_\varepsilon\cdot w_{3,\varepsilon}+2\int_{\Omega_T}\varepsilon\chi_3^\varepsilon \nabla_y\chi_3^\varepsilon\nabla v_\varepsilon\cdot w_{3,\varepsilon}.
\end{aligned}\end{equation}

To proceed, we note that $\mathcal{M}(|\nabla_y\chi_3|^2)=-\mathcal{M}(\chi_3 W)=-\mathcal{M}(\chi_3 W_4)$. The first equality is due to multiplying the equation $(5.3)$ by $\chi_3$ and integrating over $\mathbb{T}^{d+1}$. And the second equality is due to $\mathcal{M}(\chi_3 W_4)=\mathcal{M}_\tau(\mathcal{M}_y(\chi_3) W_4)=0$. Then, we can introduce the following cell problem:
\begin{equation}\left\{\begin{aligned}
\partial_\tau\widehat{\chi_{3-2}} -\Delta_y\widehat{\chi_{3-2}}=2|\nabla_y\chi_3|^2+\mathcal{M}(\chi_3 W)+\chi_3 W\text{ in }\mathbb{T}^{d+1},\\
\widehat{\chi_{3-2}}\text{ is 1-periodic in }(y,\tau)\quad\text{ with }\quad\mathcal{M}(\widehat{\chi_{3-2}})=0.
\end{aligned}\right.\end{equation}
Then $$\widehat{\chi_{3-2}}\in W^{2,1}_p(\mathbb{T}^{d+1}),\text{ for any }1<p<\infty,$$
and by embedding,
$$\widehat{\chi_{3-2}} \in C^{1+\alpha,\frac{1+\alpha}2}(\mathbb{T}^{d+1}), \text{ for any }0<\alpha<1.$$

\noindent
In view of $(5.41)$,
a simple computation shows that
\begin{equation}\begin{aligned}
&\int_{\Omega_T}\left(2|\nabla_y\chi_3^\varepsilon|^2v_\varepsilon+\mathcal{M}(\chi_3 W)v_0+\chi_3^\varepsilon W^\varepsilon v_\varepsilon\right)w_{3,\varepsilon}\\
=&\int_{\Omega_T}\left(2|\nabla_y\chi_3^\varepsilon|^2+\mathcal{M}(\chi_3 W)+\chi_3^\varepsilon W^\varepsilon \right)v_0w_{3,\varepsilon}\\
&\quad\quad+\int_{\Omega_T}\left(2|\nabla_y\chi_3^\varepsilon|^2+\chi_3^\varepsilon W^\varepsilon\right)\left( v_\varepsilon-v_0\right)w_{3,\varepsilon}\\
=&:\int_{\Omega_T} \left(\varepsilon^k\partial_t\widehat{\chi_{3-2}}^\varepsilon-\varepsilon^2\Delta_x\widehat{\chi_{3-2}}^\varepsilon\right)
v_0w_{3,\varepsilon}
+E_9=:E_{10}+E_9.
\end{aligned}\end{equation}
It is then easy to see that
\begin{equation}\begin{aligned}
|E_9|\leq& C||w_{3,\varepsilon}||_{L^2(\Omega_T)}^2
+C\varepsilon^{k-1}||v_\varepsilon||_{L^2(\Omega_T)}||w_{3,\varepsilon}||_{L^2(\Omega_T)}
+C||w_{3,\varepsilon}||_{L^2(\Omega_T)}||w_{4,\varepsilon}||_{L^2(\Omega_T)}\\
\leq& C||w_{3,\varepsilon}||_{L^2(\Omega_T)}^2+C\varepsilon^{2k-2}||v_\varepsilon||_{L^2(\Omega_T)}^2
+C||w_{4,\varepsilon}||_{L^2(\Omega_T)}^2.
\end{aligned}\end{equation}
And a direct computation yields that
\begin{equation}\begin{aligned}
|E_{10}|\leq& C\varepsilon^k||v_0||_{L^\infty_TL^2_x}||w_{3,\varepsilon}||_{L^\infty_TL^2_x}
+C\varepsilon^k||\partial_tv_0||_{L^2(\Omega_T)}||w_{3,\varepsilon}||_{L^2(\Omega_T)}\\
&+C\varepsilon^k||v_0||_{L^2(\Omega_T)}||\partial_t w_{3,\varepsilon}||_{L^2(\Omega_T)}+C\varepsilon|| v_0||_{L^2(0,T;H^1(\Omega))}|| w_{3,\varepsilon}||_{L^2(0,T;H^1(\Omega))}\\
\leq& \frac1{16}||w_{3,\varepsilon}||_{L^\infty_TL^2_x}^2+\frac1{16}||\nabla w_{3,\varepsilon}||_{L^2(\Omega_T)}^2+|| w_{3,\varepsilon}||_{L^2(\Omega_T)}^2\\
&+C\varepsilon^{k}\left(||f||_{L^2(\Omega_T)}^2+||g||_{H^1(\Omega)}^2\right),
\end{aligned}\end{equation}
where we have used $(5.33)_1$ in the inequality above.

Therefore, multiplying the equation $(5.35)$ by $w_{3,\varepsilon}$ after choosing $N$ suitable large yields that
\begin{equation}
||w_{3,\varepsilon}||_{L^\infty_TL^2_x}+||\nabla w_{3,\varepsilon}||_{L^2(\Omega_T)}+|| w_{3,\varepsilon}||_{L^2(\Omega_T)}\leq C\max(\varepsilon^{k-1},\varepsilon^{2-k})\left(||f||_{L^2(\Omega_T)}+||g||_{H^1(\Omega)}\right),
\end{equation}
which yields the desired estimates $(3.9)$ after combining $(5.34)$.\qed

\subsection{Proof of Theorem 3.4}
First, we always assume that $0<  k\leq 1$, $\gamma=1$, $\partial_\tau W\in \mathbb{T}^{d+1}$ and $\mathcal{M}_y(W)(\tau)=0$ for every $\tau\in\mathbb{T}$. In fact, under these conditions, we know that $W_4$ defined in $(5.2)$ equals to 0, and $W_3=W$. And then $\chi_{3-i}=0$ for $i=1,2,\cdots, I_k$.

Therefore, due to $(5.3)$, we know that
\begin{equation*}\begin{aligned}
\varepsilon^{-1}W^\varepsilon v_\varepsilon=\varepsilon \Delta_x\chi_3^\varepsilon\cdot v_\varepsilon= \operatorname{div}_x(v_\varepsilon\nabla_y\chi_3^\varepsilon)-\nabla_y\chi_3^\varepsilon\nabla v_\varepsilon.
\end{aligned}\end{equation*}
The uniform estimates follow directly from the equality above and the effective equation is given by
\begin{equation*}\left\{\begin{aligned}
\partial_t {v}_0-\Delta {v}_0+\left(N+\mathcal{M}(\chi_3 W)\right)v_0&=\tilde{f},&\quad &\text{in }\Omega\times (0,T),\\
{v}_0&=0,& &\text{on }\partial\Omega\times (0,T),\\
{v}_0&=g,& &\text{on }\Omega\times \{t=0\}.
\end{aligned}\right.\end{equation*}
By setting
\begin{equation}w_\varepsilon=v_\varepsilon+\varepsilon\chi_3^\varepsilon v_\varepsilon-v_0,\end{equation}
and due to $(5.30)$ after noting that $\chi_{3-i}=0$ for $i\in \mathbb{N}_+$,
$w_\varepsilon$ satisfies

\begin{equation}\left\{\begin{aligned}
\partial_t w_\varepsilon-\Delta w_\varepsilon+Nw_\varepsilon&=f_{36}(v_\varepsilon),&\quad &\text{in }\Omega\times (0,T),\\
w_\varepsilon&=0,& &\text{on }\partial\Omega\times (0,T),\\
w_\varepsilon&=\varepsilon\chi_3^\varepsilon g,& &\text{on }\Omega\times \{t=0\},
\end{aligned}\right.\end{equation}
with
\begin{equation}\begin{aligned}f_{36}(v_\varepsilon)=&\varepsilon \chi_3^\varepsilon \tilde{f}-2\nabla_y\chi_3^\varepsilon\nabla v_\varepsilon+\chi_3^\varepsilon W_\varepsilon^\varepsilon v_\varepsilon+\mathcal{M}(\chi_3 W)v_0\\
&+\varepsilon^{2-k}\operatorname{div}_x(v_\varepsilon\nabla_y \partial_\tau \widehat{\chi_{3-1}}^\varepsilon)
-\varepsilon^{2-k}\nabla v_\varepsilon\nabla_y \partial_\tau \widehat{\chi_{3-1}}^\varepsilon.\\
\end{aligned}\end{equation}

In order to obtain the convergence, similar to the proof of $(5.33)$ and  $(5.42)$, we need only to the bound the error for the following equation:
\begin{equation}\left\{\begin{aligned}
\partial_t \rho_\varepsilon-\Delta \rho_\varepsilon+N\rho_\varepsilon&=f_{37}(v_0),&\quad &\text{in }\Omega\times (0,T),\\
\rho_\varepsilon&=0,& &\text{on }\partial\Omega\times (0,T),\\
\rho_\varepsilon&=\varepsilon\chi_3^\varepsilon g,& &\text{on }\Omega\times \{t=0\},
\end{aligned}\right.\end{equation}
with
\begin{equation}\begin{aligned}f_{37}(v_0)=&\left(2|\nabla_y\chi_3^\varepsilon|^2+\mathcal{M}(\chi_3 W)+\chi_3^\varepsilon W^\varepsilon \right)v_0\\
=&\left(2|\nabla_y\chi_3^\varepsilon|^2+\mathcal{M}_y(\chi_3 W)(t/\varepsilon^k)+\chi_3^\varepsilon W^\varepsilon \right)v_0+\left(\mathcal{M}(\chi_3 W)-\mathcal{M}_y(\chi_3 W)(t/\varepsilon^k)\right)v_0\\
=&:f_{371}(v_0)+f_{372}(v_0).
\end{aligned}\end{equation}
According to $(5.3)$, we know that
\begin{equation}\begin{aligned}
&\mathcal{M}_y\left(2|\nabla_y\chi_3|^2+\mathcal{M}_y(\chi_3 W)+\chi_3 W \right)(\tau)=0,\text{ for every }\tau\in \mathbb{T};\\
&\mathcal{M}_\tau\left(\mathcal{M}(\chi_3 W)-\mathcal{M}_y(\chi_3 W)\right)=0.
\end{aligned}\end{equation}
To proceed, for every $\tau\in\mathbb{T}$, we introduce $\widehat{\chi_{3-3}}(y,\tau)$ satisfying

\begin{equation}\left\{\begin{aligned}
\Delta_y \widehat{\chi_{3-3}}=\left(2|\nabla_y\chi_3|^2+\mathcal{M}_y(\chi_3 W)+\chi_3 W \right)\text{ in }\mathbb{T}^{d},\\
{\widehat{\chi_{3-3}}}\text{ is 1-periodic in }(y,\tau),\ \mathcal{M}_y({\widehat{\chi_{3-3}}})(\tau)=0.
\end{aligned}\right.\end{equation}
By elliptic theory, we have
\begin{equation*}
\widehat{\chi_{3-3}}\in L^\infty(\mathbb{T};W^{2,p}(\mathbb{T}^{d})),\text{ for any }1<p<\infty
\end{equation*}
and by embedding,
$$\widehat{\chi_{3-3}}\in L^\infty(\mathbb{T};C^{1+\alpha}(\mathbb{T}^{d})), \text{ for any }0<\alpha<1.$$
And we introduce $\widehat{\chi_{3-3}}(\tau)\in L^\infty(\mathbb{T})$ satisfying
\begin{equation}
\widehat{\chi_{3-4}}(\tau)=\int_0^\tau \left(\mathcal{M}(\chi_3 W)-\mathcal{M}_y(\chi_3 W)\right)(s)ds.
\end{equation}
Then, it is easy to see that
\begin{equation}\begin{aligned}
f_{371}(v_0)=\varepsilon^2 v_0\Delta_x \widehat{\chi_{3-3}}^\varepsilon=
\varepsilon\operatorname{div}\left(v_0\nabla_y \widehat{\chi_{3-3}}^\varepsilon \right)
-\varepsilon\nabla v_0\nabla_y \widehat{\chi_{3-3}}^\varepsilon,
\end{aligned}\end{equation}
and
\begin{equation}\begin{aligned}
f_{372}(v_0)=\varepsilon^k v_0\partial_t \widehat{\chi_{3-4}}^\varepsilon=
\varepsilon^k\partial_t\left( v_0 \widehat{\chi_{3-4}}^\varepsilon\right)-\varepsilon^k\widehat{\chi_{3-4}}^\varepsilon \partial_t v_0.
\end{aligned}\end{equation}
Therefore, according to $(5.49)$-$(5.55)$, $\tilde{\rho}_\varepsilon=:\rho_\varepsilon-\varepsilon^k \widehat{\chi_{3-3}}^\varepsilon v_0$ satisfies
\begin{equation}\left\{\begin{aligned}
\partial_t \tilde{\rho}_\varepsilon-\Delta \tilde{\rho}_\varepsilon+N\tilde{\rho}_\varepsilon&=f_{38}(v_0),&\quad &\text{in }\Omega\times (0,T),\\
\tilde{\rho}_\varepsilon&=0,& &\text{on }\partial\Omega\times (0,T),\\
\tilde{\rho}_\varepsilon&=\varepsilon\chi_3^\varepsilon g,& &\text{on }\Omega\times \{t=0\},
\end{aligned}\right.\end{equation}
with
\begin{equation}\begin{aligned}f_{38}(v_0)=&-\varepsilon^k\widehat{\chi_{3-4}}^\varepsilon \partial_t v_0+
\varepsilon^k\widehat{\chi_{3-4}}^\varepsilon \Delta_x v_0-N\varepsilon^k\widehat{\chi_{3-4}}^\varepsilon v_0\\
&+\varepsilon\operatorname{div}\left(v_0\nabla_y \widehat{\chi_{3-3}}^\varepsilon \right)
-\varepsilon\nabla v_0\nabla_y \widehat{\chi_{3-3}}^\varepsilon.
\end{aligned}\end{equation}
Then, it is easy to see that
\begin{equation}
||\tilde{\rho}_\varepsilon||_{L^\infty_TL^2_x}+||\nabla\tilde{\rho}_\varepsilon||_{L^2(\Omega_T)}+|| \tilde{\rho}_\varepsilon||_{L^2(\Omega_T)}\leq C\varepsilon^{k}\left(||f||_{L^2(\Omega_T)}+||g||_{H^1(\Omega)}\right).
\end{equation}

Consequently, with $w_\varepsilon$ defined in $(5.46)$, we can obtain the following desired convergence rates:
\begin{equation}
||w_{\varepsilon}||_{L^\infty_TL^2_x}+||\nabla w_{\varepsilon}||_{L^2(\Omega_T)}+|| w_{\varepsilon}||_{L^2(\Omega_T)}\leq C\varepsilon^k\left(||f||_{L^2(\Omega_T)}+||g||_{H^1(\Omega)}\right).
\end{equation}

Moreover, if $k=0$, according to $(5.27)$, then it is easy to see that the homogenized equation for $(1.1)$ is given by
\begin{equation}\left\{\begin{aligned}
\partial_t {v}_0-\Delta {v}_0+\left(N+\mathcal{M}_y(\chi_3 W)(t)\right)v_0&=\tilde{f},&\quad &\text{in }\Omega\times (0,T),\\
{v}_0&=0,& &\text{on }\partial\Omega\times (0,T),\\
{v}_0&=g,& &\text{on }\Omega\times \{t=0\}.
\end{aligned}\right.\end{equation}

Note that if $k=0$, then do not need to handle the second term in $(5.50)$.
 Similar to the proof of $(5.54)$, we can arrive at the desired estimates:
\begin{equation}
||w_{\varepsilon}||_{L^\infty_TL^2_x}+||\nabla w_{\varepsilon}||_{L^2(\Omega_T)}+|| w_{\varepsilon}||_{L^2(\Omega_T)}\leq C\varepsilon\left(||f||_{L^2(\Omega_T)}+||g||_{H^1(\Omega)}\right),
\end{equation}
by letting $k=0$ in $(5.46)$.

\section{Proof of Theorem 3.6}

In this subsection, we always assume that $2<k\leq 3$, $\gamma=k-1$, $W\in L^\infty(\mathbb{T}^{d+1})$, $\nabla_yW,\nabla ^2_yW,\nabla ^3_yW\in L^\infty(\mathbb{T}^{d+1})$ with $\mathcal{M}_\tau(W)(y)=0$ for every $y\in \mathbb{T}^d$. And we first derive a uniform estimate for $u_\varepsilon$.
\begin{lemma}[Uniform estimates]
Under the conditions in Theorem 3.6, there hold the following uniform estimates for $u_\varepsilon$:
\begin{equation}
||u_\varepsilon||_{L^\infty_TL^2_x}+||\nabla u_\varepsilon||_{L^2(\Omega_T)}+|| u_\varepsilon||_{L^2(\Omega_T)}\leq C\left(||g||_{L^2(\Omega)}+||f||_{L^2(\Omega)}\right),
\end{equation}
for a positive constant $C$ depending only on $W$, $d$ and $T$.
\end{lemma}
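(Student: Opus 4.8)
The plan is to run the energy argument of Lemma~4.5, but since here $\gamma=k-1\in(1,2]$ is larger, a single ``$\tau$-antiderivative'' of $W$ no longer suffices: one must unfold the singular potential into an exact $t$-derivative \emph{twice}. The hypothesis $\mathcal M_\tau(W)(y)=0$ for every $y$ is precisely what makes the first correction periodic, and the restriction $k\le3$ is exactly what makes the iteration terminate; for this lemma only $W,\nabla_yW\in L^\infty(\mathbb T^{d+1})$ are used (the bounds on $\nabla_y^2W,\nabla_y^3W$ will enter later, in the higher correctors). Concretely, testing the equation for $v_\varepsilon=e^{-Nt}u_\varepsilon$ against $v_\varepsilon$ gives, for every $t\in(0,T]$,
\begin{equation*}
\tfrac12\int_{\Omega\times\{t\}}v_\varepsilon^2+\int_{\Omega_t}|\nabla v_\varepsilon|^2+N\int_{\Omega_t}v_\varepsilon^2
=\tfrac12\int_\Omega g^2+\int_{\Omega_t}\tilde f\,v_\varepsilon+\int_{\Omega_t}\varepsilon^{-(k-1)}W^\varepsilon v_\varepsilon^2 ,
\end{equation*}
so it suffices to bound $\big|\int_{\Omega_t}\varepsilon^{-(k-1)}W^\varepsilon v_\varepsilon^2\big|$ by $\eta\int_{\Omega_t}|\nabla v_\varepsilon|^2+\tfrac14\int_{\Omega\times\{t\}}v_\varepsilon^2+C\int_{\Omega_t}v_\varepsilon^2+C(\|f\|_{L^2(\Omega_T)}^2+\|g\|_{L^2(\Omega)}^2)$, with $\eta$ as small as we wish and $C$ independent of $N$ (the term $C\int v_\varepsilon^2$ being then absorbed by choosing $N$ large).

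Next I would introduce $\chi_5(y,\tau)=\int_0^\tau W(y,s)\,ds$. Since $\mathcal M_\tau(W)(y)=0$, $\chi_5$ is $1$-periodic in $(y,\tau)$ with $\chi_5(y,0)=0$, and since $W,\nabla_yW\in L^\infty$ we have $\chi_5,\nabla_y\chi_5\in L^\infty(\mathbb T^{d+1})$; moreover $\varepsilon^{-(k-1)}W^\varepsilon=\varepsilon\,\partial_t\chi_5^\varepsilon$. Integrating by parts in $t$ (the $t=0$ boundary term vanishes because $\chi_5(\cdot,0)=0$, and the $t=t$ one is $\le\varepsilon\|\chi_5\|_{L^\infty}\int_{\Omega\times\{t\}}v_\varepsilon^2$), then replacing $\partial_tv_\varepsilon$ by $\Delta v_\varepsilon-Nv_\varepsilon+\varepsilon^{-(k-1)}W^\varepsilon v_\varepsilon+\tilde f$, one checks that the $\Delta v_\varepsilon$ contribution (integrate by parts in $x$, using $\nabla_x\chi_5^\varepsilon=\varepsilon^{-1}(\nabla_y\chi_5)^\varepsilon$) and the $Nv_\varepsilon$ and $\tilde f$ contributions are all of the admissible form above, with $\varepsilon$-small prefactors on $\int|\nabla v_\varepsilon|^2$ and on the $\varepsilon N\int v_\varepsilon^2$ term. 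The only genuinely singular leftover is a fixed multiple of $\varepsilon^{2-k}\int_{\Omega_t}\chi_5^\varepsilon W^\varepsilon v_\varepsilon^2$.

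The decisive point is that $\chi_5W=\tfrac12\partial_\tau(\chi_5^2)$, hence $\chi_5^\varepsilon W^\varepsilon=\tfrac12\varepsilon^k\partial_t\big((\chi_5^\varepsilon)^2\big)$ and $\varepsilon^{2-k}\chi_5^\varepsilon W^\varepsilon=\tfrac12\varepsilon^2\partial_t\big((\chi_5^\varepsilon)^2\big)$; no extra hypothesis is needed since $\mathcal M_\tau(\chi_5W)=0$ is automatic (this is the second round of the two-scale unfolding). Integrating by parts in $t$ once more — the $t=0$ boundary term again vanishes, the $t=t$ one has a favourable sign, and the $\Delta v_\varepsilon$, $Nv_\varepsilon$, $\tilde f$ contributions are harmless with $O(\varepsilon)$ and $O(\varepsilon^2)$ prefactors, still using only $\chi_5,\nabla_y\chi_5\in L^\infty$ — the leftover becomes a fixed multiple of $\varepsilon^{3-k}\int_{\Omega_t}(\chi_5^\varepsilon)^2W^\varepsilon v_\varepsilon^2$. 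Because $2<k\le3$ we have $\varepsilon^{3-k}\le1$ and $\|\chi_5^2W\|_{L^\infty}\le\|\chi_5\|_{L^\infty}^2\|W\|_{L^\infty}$, so this last term is simply $\le C\int_{\Omega_t}v_\varepsilon^2$, with $C$ independent of $N$. Collecting everything, fixing $\eta$ small, then $N$ large, then $\varepsilon\le\varepsilon_0$ small (all depending only on $W$ and $d$), the basic energy identity closes and yields $(6.1)$ for $v_\varepsilon$, hence for $u_\varepsilon=e^{Nt}v_\varepsilon$, when $\varepsilon\le\varepsilon_0$; for $\varepsilon>\varepsilon_0$ the zeroth-order coefficient $\varepsilon^{-(k-1)}W^\varepsilon$ is bounded by $\varepsilon_0^{-(k-1)}\|W\|_{L^\infty}$, so $(6.1)$ follows from Lemma~4.1 and a Gronwall argument, with $C$ still depending only on $W$, $d$ and $T$.

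I expect the main obstacle to be exactly this iterated unfolding: one must verify at each stage that the residual singular term is again an exact $\partial_\tau$-derivative of a bounded periodic function — which works only because $\mathcal M_\tau(W)=0$ forces $\chi_5$ to be periodic and then $\chi_5W=\tfrac12\partial_\tau\chi_5^2$ is automatically $\tau$-mean-zero — and one must track the powers of $\varepsilon$ carefully (the residual exponent improving from $2-k$ to $3-k$) so as to see that the iteration halts after two steps. This is also why $k=3$ is borderline admissible while $k>3$ is not, in accordance with Remark~3.7.
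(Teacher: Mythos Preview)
Your proposal is correct and follows essentially the same two-step unfolding as the paper's proof. The only cosmetic difference is that where the paper introduces an auxiliary corrector $\chi_{6-1}(y,\tau)=\int_0^\tau(W\chi_5)(y,s)\,ds$ and then observes $\chi_{6-1},\nabla_y\chi_{6-1}\in L^\infty$, you short-circuit this by noting directly that $\chi_5 W=\tfrac12\partial_\tau(\chi_5^2)$ --- in fact $\chi_{6-1}=\tfrac12\chi_5^2$, so the two are literally the same function; your observation about the favourable sign of the second $t$-boundary term is a small bonus the paper does not exploit.
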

\begin{proof}Recall that $v_\varepsilon$ satisfies
\begin{equation}\left\{\begin{aligned}
\partial_t v_\varepsilon-\Delta v_\varepsilon+\left(N-\varepsilon^{1-k}W^\varepsilon\right)v_\varepsilon&=\tilde{f}, \quad\quad\text{in }\Omega\times (0,T),\\
v_\varepsilon&=0, \quad\quad\text{on }\partial\Omega\times (0,T),\\
v_\varepsilon&=g, \quad\quad\text{on }\Omega\times \{t=0\}.
\end{aligned}\right.\end{equation}
Multiplying the equation $(6.2)$ by $v_\varepsilon$ and integrating over $\Omega\times (0,t)$ for $t\in(0,T]$ yield that
\begin{equation}
\frac12\int_{\Omega\times \{t\}} |v_\varepsilon|^2+\int_0^t\int_\Omega |\nabla v_\varepsilon|^2+\int_0^t\int_\Omega \left(N-\varepsilon^{1-k}W^\varepsilon\right) |v_\varepsilon|^2=\int_0^t\int_\Omega \tilde{f}v_\varepsilon+\frac12\int_\Omega |g|^2.\end{equation}
With $\chi_5$ defined in $(3.17)$, a direct computation yields that
\begin{equation}\begin{aligned}
&-\varepsilon^{1-k}\int_0^t\int_\Omega W^\varepsilon |v_\varepsilon|^2
=-\varepsilon\int_0^t\int_\Omega|v_\varepsilon|^2\cdot\partial_t \chi_5^\varepsilon\\
=&-\varepsilon\int_{\Omega\times \{t\}}|v_\varepsilon|^2\cdot\chi_5^\varepsilon+2\varepsilon
\int_0^t\int_\Omega v_\varepsilon\cdot\partial_tv_\varepsilon\cdot \chi_5^\varepsilon=:M_1+M_2.
\end{aligned}\end{equation}
In view of $(6.2)$, we have
$$M_2=2\varepsilon
\int_0^t\int_\Omega v_\varepsilon\chi_5^\varepsilon\cdot\left(\Delta v_\varepsilon+\varepsilon^{1-k}W^\varepsilon v_\varepsilon+\tilde{f}-N v_\varepsilon\right)=:M_{21}+M_{22}+M_{23}.$$
It is easy to see that
\begin{equation}|M_{21}|\leq 2 \varepsilon||\chi_5||_{L^\infty}\int_0^t\int_\Omega |\nabla v_\varepsilon|^2+
2 ||\nabla_y\chi_5||_{L^\infty}\int_0^t\int_\Omega |\nabla v_\varepsilon|\cdot|v_\varepsilon|,\end{equation}
and
\begin{equation}|M_{23}|\leq 2 \varepsilon||\chi_5||_{L^\infty}\int_0^t\int_\Omega | v_\varepsilon|\cdot|\tilde{f}|+2 \varepsilon N||\chi_5||_{L^\infty}\int_0^t\int_\Omega | v_\varepsilon|^2.\end{equation}
To estimate $M_{22}$, we let
$$\chi_{6-1}(y,t)=:\int_0^t (W\chi_5)(y,s)ds,$$
then due to $\mathcal{M}_\tau(W\chi_5)=\frac12\left(\mathcal{M}_\tau(W)\right)^2=0$ for every $y\in \mathbb{T}^d$ and $\nabla_y (W\chi_5)\in L^\infty(\mathbb{T}^{d+1})$, we have
\begin{equation}
\chi_{6-1}\in L^\infty(\mathbb{T}^{d+1})\ \text{ and }\nabla_y\chi_{6-1}\in L^\infty(\mathbb{T}^{d+1}).
\end{equation}
Therefore,
\begin{equation*}\begin{aligned}
&M_{22}=2\varepsilon^{2}\int_0^t\int_\Omega|v_\varepsilon|^2\cdot\partial_t \chi_{6-1}^\varepsilon\\
=&2\varepsilon^{2}\int_{\Omega\times \{t\}}|v_\varepsilon|^2\cdot\chi_{6-1}^\varepsilon-4\varepsilon^{2}
\int_0^t\int_\Omega v_\varepsilon\cdot\partial_tv_\varepsilon\cdot \chi_{6-1}^\varepsilon\\
=&2\varepsilon^{2}\int_{\Omega\times \{t\}}|v_\varepsilon|^2\cdot\chi_{6-1}^\varepsilon-4\varepsilon^{2}
\int_0^t\int_\Omega v_\varepsilon\chi_{6-1}^\varepsilon\cdot\left(\Delta v_\varepsilon+\varepsilon^{1-k}W^\varepsilon v_\varepsilon+\tilde{f}-Nv_\varepsilon\right)\\
=&:2\varepsilon^{2}\int_{\Omega\times \{t\}}|v_\varepsilon|^2\cdot\chi_{6-1}^\varepsilon+M_{221}+M_{222}+M_{223}.
\end{aligned}\end{equation*}
It is easy to see that
\begin{equation}\begin{aligned}
|M_{221}|\leq 4\varepsilon^{2}||\chi_{6-1}||_{L^\infty}\int_0^t\int_\Omega |\nabla v_\varepsilon|^2+4\varepsilon||\nabla_y\chi_{6-1}||_{L^\infty}\int_0^t\int_\Omega |\nabla v_\varepsilon|\cdot|v_\varepsilon|,
\end{aligned}\end{equation}
and
\begin{equation}
|M_{222}|\leq 4\varepsilon^{3-k}||W\chi_{6-1}||_{L^\infty}\int_0^t\int_\Omega | v_\varepsilon|^2,
\end{equation}
and
\begin{equation}
|M_{223}|\leq 4\varepsilon^{2}||\chi_{6-1}||_{L^\infty}\int_0^t\int_\Omega |v_\varepsilon|\cdot|\tilde{f}|+4\varepsilon^{2}N||\chi_{6-1}||_{L^\infty}\int_0^t\int_\Omega |v_\varepsilon|^2.
\end{equation}
In conclusion, combining $(6.4)$-$(6.10)$ gives that
\begin{equation}\begin{aligned}
\varepsilon^{1-k}\left|\int_0^t\int_\Omega W^\varepsilon |v_\varepsilon|^2\right|
\leq C_0\varepsilon&\int_{\Omega\times\{T\}}|v_\varepsilon|^2+C_0 \varepsilon\int_0^t\int_\Omega |\nabla v_\varepsilon|^2+\left(C_0N\varepsilon+C_0\right)\int_0^t\int_\Omega | v_\varepsilon|^2\\
&+C_0\int_0^t\int_\Omega |\nabla v_\varepsilon|\cdot|v_\varepsilon|+C_0\varepsilon N\int_0^t\int_\Omega | v_\varepsilon|\cdot|\tilde{f}|,
\end{aligned}\end{equation}
where $C_0=:4\left(||\chi_5||_{L^\infty}+||\chi_{6-1}||_{L^\infty}+||\nabla_y\chi_5||_{L^\infty}
+||\nabla_y\chi_{6-1}||_{L^\infty}+||W\chi_{6-1}||_{L^\infty}\right).$

In conclusion, substituting $(6.11)$ into $(6.3)$ after choosing $0<\varepsilon\leq \varepsilon_2$ with $C_0\varepsilon_2=\frac1{16}$ and $N>2C_0$ yields that
\begin{equation}
\int_{\Omega\times \{t\}} |v_\varepsilon|^2+\int_0^t\int_\Omega |\nabla v_\varepsilon|^2+N\int_0^t\int_\Omega |v_\varepsilon|^2\leq C\int_0^t\int_\Omega |\tilde{f}|^2+C\int_\Omega |g|^2,\end{equation}

Consequently, noting that $v_\varepsilon=e^{-2C_1t}u_\varepsilon$, we obtain, for $0<\varepsilon\leq \varepsilon_2$
\begin{equation}
\int_{\Omega\times \{t\}} |u_\varepsilon|^2+\int_0^t\int_\Omega |\nabla u_\varepsilon|^2+\int_0^t\int_\Omega |u_\varepsilon|^2\leq C\int_0^t\int_\Omega |\tilde{f}|^2+C\int_\Omega |g|^2,\end{equation}
for a positive constant $C$ depending only on $C_0$ and $T$. Thus we complete the proof of $(6.1)$ since it directly follows from Lemma 4.1 for $1\geq\varepsilon\geq \varepsilon_2$.
\end{proof}
In order to determine the effective equation, we also need the following Hessian estimates for $u_\varepsilon$.

\begin{lemma}Under the conditions in Theorem 3.6, there hold the following estimates for $u_\varepsilon$:
\begin{equation}||\nabla {u}_\varepsilon||_{L^\infty_TL^2_x}+||\nabla^2 {u}_\varepsilon||_{L^2(\Omega_T)}\leq C\varepsilon^{-1}\left(||f||_{L^2(\Omega_T)}+||g||_{H^1(\Omega)}\right),\end{equation}
for a positive constant $C$ depending only on $W$, $T$, $d$ and $\Omega$.
\end{lemma}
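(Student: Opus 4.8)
The plan is to strip off the first‐order corrector from $u_\varepsilon$ and then apply the $W^{2,1}_2$-theory (the second estimate of Lemma 4.1) to the corrected function. As usual we pass to $v_\varepsilon=e^{-Nt}u_\varepsilon$, which solves $(6.2)$; since $u_\varepsilon=e^{Nt}v_\varepsilon$ and $e^{Nt}\le e^{NT}$ on $[0,T]$, it suffices to prove the claimed bound for $v_\varepsilon$ (here $2<k\le3$ is fixed). Recall from $(3.17)$ the corrector $\chi_5(y,\tau)=\int_0^\tau W(y,s)\,ds$. Since $\mathcal{M}_\tau(W)(y)=0$ for every $y$, the function $\chi_5$ is genuinely $1$-periodic in $(y,\tau)$, and applying the same identity to $\nabla_yW$ and $\nabla^2_yW$ gives $\chi_5,\nabla_y\chi_5,\nabla^2_y\chi_5\in L^\infty(\mathbb{T}^{d+1})$, with norms controlled by $||W||_{L^\infty},||\nabla_yW||_{L^\infty},||\nabla^2_yW||_{L^\infty}$. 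The structural identities I will use are
\begin{equation*}
\partial_t\chi_5^\varepsilon=\varepsilon^{-k}W^\varepsilon,\qquad \nabla_x\chi_5^\varepsilon=\varepsilon^{-1}(\nabla_y\chi_5)^\varepsilon,\qquad \Delta_x\chi_5^\varepsilon=\varepsilon^{-2}(\Delta_y\chi_5)^\varepsilon,
\end{equation*}
so in particular $\varepsilon^{1-k}W^\varepsilon=\varepsilon\,\partial_t\chi_5^\varepsilon$.

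Next I would set $\tilde{v}_\varepsilon:=v_\varepsilon-\varepsilon\chi_5^\varepsilon v_\varepsilon$ and compute its equation, using the identities above and substituting $\partial_tv_\varepsilon-\Delta v_\varepsilon=\tilde f-Nv_\varepsilon+\varepsilon^{1-k}W^\varepsilon v_\varepsilon$ from $(6.2)$. After the singular term $\varepsilon^{1-k}W^\varepsilon v_\varepsilon$ cancels, one is left with
\begin{equation*}
\partial_t\tilde{v}_\varepsilon-\Delta\tilde{v}_\varepsilon+N\tilde{v}_\varepsilon=(1-\varepsilon\chi_5^\varepsilon)\tilde f+\varepsilon^{-1}(\Delta_y\chi_5)^\varepsilon v_\varepsilon+2(\nabla_y\chi_5)^\varepsilon\cdot\nabla v_\varepsilon-\varepsilon^{2-k}\chi_5^\varepsilon W^\varepsilon v_\varepsilon
\end{equation*}
in $\Omega\times(0,T)$, together with $\tilde{v}_\varepsilon=0$ on $\partial\Omega\times(0,T)$ and $\tilde{v}_\varepsilon=g$ on $\Omega\times\{t=0\}$ (the initial datum is exactly $g$ because $\chi_5(\cdot,0)\equiv0$). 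By Lemma 6.1, $||v_\varepsilon||_{L^2(\Omega_T)}+||\nabla v_\varepsilon||_{L^2(\Omega_T)}\le C(||g||_{L^2(\Omega)}+||f||_{L^2(\Omega_T)})$; combined with $\chi_5,\nabla_y\chi_5,\Delta_y\chi_5\in L^\infty$ and with $\varepsilon^{2-k}\le\varepsilon^{-1}$ (valid since $2<k\le3$ and $0<\varepsilon<1$), the right‐hand side above is bounded in $L^2(\Omega_T)$ by $C\varepsilon^{-1}(||g||_{L^2(\Omega)}+||f||_{L^2(\Omega_T)})$. The second estimate of Lemma 4.1 (applicable since $\Omega$ is $C^{1,1}$ and $g\in H^1_0(\Omega)$) then yields
\begin{equation*}
||\nabla\tilde{v}_\varepsilon||_{L^\infty_TL^2_x}+||\nabla^2\tilde{v}_\varepsilon||_{L^2(\Omega_T)}\le C\varepsilon^{-1}\left(||f||_{L^2(\Omega_T)}+||g||_{H^1(\Omega)}\right).
\end{equation*}

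It then remains to transfer these bounds from $\tilde{v}_\varepsilon$ to $v_\varepsilon$. Pick $\varepsilon_0\in(0,1)$ with $\varepsilon_0||\chi_5||_{L^\infty}\le\tfrac12$. For $\varepsilon\le\varepsilon_0$ the scalar factor $1-\varepsilon\chi_5^\varepsilon$ lies between $\tfrac12$ and $\tfrac32$, and differentiating $\tilde{v}_\varepsilon=(1-\varepsilon\chi_5^\varepsilon)v_\varepsilon$ gives $(1-\varepsilon\chi_5^\varepsilon)\nabla v_\varepsilon=\nabla\tilde{v}_\varepsilon+(\nabla_y\chi_5)^\varepsilon v_\varepsilon$ and, after one more differentiation and collecting terms, an expression for $(1-\varepsilon\chi_5^\varepsilon)\nabla^2 v_\varepsilon$ in terms of $\nabla^2\tilde{v}_\varepsilon$, $(\nabla_y\chi_5)^\varepsilon\,\nabla v_\varepsilon$ and $\varepsilon^{-1}(\nabla^2_y\chi_5)^\varepsilon v_\varepsilon$. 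Estimating the right‐hand sides with the bound just obtained for $\tilde{v}_\varepsilon$, with $\nabla_y\chi_5,\nabla^2_y\chi_5\in L^\infty$, and with Lemma 6.1 for $||v_\varepsilon||_{L^\infty_TL^2_x}$ and $||\nabla v_\varepsilon||_{L^2(\Omega_T)}$, produces $||\nabla v_\varepsilon||_{L^\infty_TL^2_x}+||\nabla^2 v_\varepsilon||_{L^2(\Omega_T)}\le C\varepsilon^{-1}(||f||_{L^2(\Omega_T)}+||g||_{H^1(\Omega)})$. For the complementary range $\varepsilon\in[\varepsilon_0,1]$ I would instead apply the second estimate of Lemma 4.1 directly to $(6.2)$ with source $\tilde f-Nv_\varepsilon+\varepsilon^{1-k}W^\varepsilon v_\varepsilon$, using Lemma 6.1 and $\varepsilon^{1-k}\le\varepsilon_0^{-2}$ (again because $k\le3$); since $\varepsilon^{-1}\ge1$, this is absorbed into the claimed bound. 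Multiplying back by $e^{Nt}$ returns the estimate for $u_\varepsilon$, with $C$ depending only on $W$, $T$, $d$ and $\Omega$.

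I expect the main obstacle to be the cancellation step: one must check carefully that $\tilde{v}_\varepsilon=v_\varepsilon-\varepsilon\chi_5^\varepsilon v_\varepsilon$ removes the whole $\varepsilon^{1-k}W^\varepsilon v_\varepsilon$ contribution and that no leftover term is more singular than $\varepsilon^{-1}$ — this is precisely where the hypotheses $\nabla^2_yW\in L^\infty$ (to bound $\varepsilon^{-1}(\Delta_y\chi_5)^\varepsilon v_\varepsilon$) and $k\le3$ (to bound $\varepsilon^{2-k}\chi_5^\varepsilon W^\varepsilon v_\varepsilon$) enter. A secondary point is that the right‐hand side of the $\tilde{v}_\varepsilon$-equation contains $\nabla v_\varepsilon$, so the argument genuinely requires the uniform $L^2(0,T;H^1(\Omega))$-estimate of Lemma 6.1, and the transfer‐back is legitimate only once $\varepsilon$ is small enough that $1-\varepsilon\chi_5^\varepsilon$ is invertible.
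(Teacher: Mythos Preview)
Your proposal is correct and follows essentially the same route as the paper: define $\chi_5(y,\tau)=\int_0^\tau W(y,s)\,ds$, subtract the first-order corrector to form $(1-\varepsilon\chi_5^\varepsilon)\cdot(\text{solution})$, observe that the singular term cancels and the residual right-hand side is $O(\varepsilon^{-1})$ in $L^2(\Omega_T)$ thanks to $\Delta_y\chi_5\in L^\infty$ and $k\le3$, apply the $W^{2,1}_2$-estimate of Lemma~4.1, and transfer back using the invertibility of $1-\varepsilon\chi_5^\varepsilon$ for small $\varepsilon$. The only cosmetic difference is that the paper works directly with $u_\varepsilon$ rather than $v_\varepsilon=e^{-Nt}u_\varepsilon$, and your write-up is in fact more explicit about the transfer-back step and the treatment of the range $\varepsilon\ge\varepsilon_0$.
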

\begin{proof}
A direct computation yields that
$$\begin{aligned}
\varepsilon^{1-k}W^\varepsilon u_\varepsilon&=\varepsilon \partial_t \chi_5^\varepsilon \cdot u_\varepsilon=\varepsilon\partial_t (\chi_5^\varepsilon u_\varepsilon)-\varepsilon\chi_5^\varepsilon \partial_t u_\varepsilon\\
&=\varepsilon\partial_t (\chi_5^\varepsilon u_\varepsilon)-\varepsilon\chi_5^\varepsilon\left(\Delta u_\varepsilon+\varepsilon^{1-k}W^\varepsilon u_\varepsilon+f\right)\\
&=\varepsilon\partial_t (\chi_5^\varepsilon u_\varepsilon)-\varepsilon\Delta(\chi_5^\varepsilon u_\varepsilon)+2\nabla_y \chi_5^\varepsilon \nabla u_\varepsilon\\
&\quad+{\varepsilon}^{-1}\Delta_y\chi_5^\varepsilon\cdot u_\varepsilon-\varepsilon\chi_5^\varepsilon f-\varepsilon^{2-k}\chi_5^\varepsilon W^\varepsilon u_\varepsilon.
\end{aligned}$$
Then
it is easy to see that $\tilde{u}_\varepsilon=:u_\varepsilon-\varepsilon\chi^\varepsilon_5u_\varepsilon$ satisfies the following parabolic equations:
\begin{equation*}\left\{\begin{aligned}
\partial_t \tilde{u}_\varepsilon-\Delta \tilde{u}_\varepsilon &=F(u_\varepsilon),&\quad &\text{in }\Omega\times (0,T),\\
u_\varepsilon&=0,& &\text{on }\partial\Omega\times (0,T),\\
u_\varepsilon&=g,& &\text{on }\Omega\times \{t=0\},
\end{aligned}\right.\end{equation*}
with $$F(u_\varepsilon)=:(1-\varepsilon\chi^\varepsilon_6)f-\varepsilon^{2-k}\chi_5^\varepsilon W^\varepsilon u_\varepsilon+\varepsilon^{-1}u_\varepsilon\Delta_y\chi_5^\varepsilon+2\nabla u_\varepsilon\nabla_y\chi^\varepsilon_5.$$
Then
according to Lemma 4.1, we have
\begin{equation}
||\partial_t \tilde{u}_\varepsilon||_{L^2(\Omega_T)}+||\nabla \tilde{u}_\varepsilon||_{L^\infty_TL^2_x}+||\nabla^2 \tilde{u}_\varepsilon||_{L^2(\Omega_T)}\leq C\varepsilon^{-1}\left(||f||_{L^2(\Omega_T)}+||g||_{H^1(\Omega)}\right).
\end{equation}
Therefore, for $\varepsilon\leq \varepsilon_3$ with $\varepsilon_3$ small depending only on $||\chi_5||_{L^\infty(\mathbb{T}^{d+1})}$,  it is easy to see that
$$||\nabla {u}_\varepsilon||_{L^\infty_TL^2_x}+||\nabla^2 {u}_\varepsilon||_{L^2(\Omega_T)}\leq C\varepsilon^{-1}\left(||f||_{L^2(\Omega_T)}+||g||_{H^1(\Omega)}\right),$$
which implies the desired estimate $(6.14)$ since this estimate is due to Lemma 4.1 for $1>\varepsilon>\varepsilon_3$.
\end{proof}

Now we are ready to obtain the homogenization equation.\\

\noindent\textbf{Proof of Theorem 3.6: the homogenization equation}.

Using the notions in Lemma 6.1, and recalling that
\begin{equation}\tilde{\chi_5}(y,\tau)=\chi_5(y,\tau)-\mathcal{M}_\tau(\chi_5)(y),\end{equation}
 we have
\begin{equation}\begin{aligned}
\varepsilon^{1-k}W^\varepsilon u_\varepsilon&=\varepsilon \partial_t \tilde{\chi_5}^\varepsilon \cdot u_\varepsilon=\varepsilon\partial_t (\tilde{\chi_5}^\varepsilon u_\varepsilon)-\varepsilon\tilde{\chi_5}^\varepsilon \partial_t u_\varepsilon\\
&=\varepsilon\partial_t (\tilde{\chi_5}^\varepsilon u_\varepsilon)-\varepsilon\tilde{\chi_5}^\varepsilon\left(\Delta u_\varepsilon+\varepsilon^{1-k}W^\varepsilon u_\varepsilon+f\right)\\
&=\varepsilon\partial_t (\chi_5^\varepsilon u_\varepsilon)-\varepsilon\Delta(\tilde{\chi_5}^\varepsilon u_\varepsilon)+2\nabla_y \tilde{\chi_5}^\varepsilon \nabla u_\varepsilon\\
&\quad+{\varepsilon}^{-1}\Delta_y\tilde{\chi_5}^\varepsilon \cdot u_\varepsilon-\varepsilon\tilde{\chi_5}^\varepsilon f-\varepsilon^{2-k}\tilde{\chi_5}^\varepsilon W^\varepsilon u_\varepsilon.
\end{aligned}\end{equation}
To proceed, let
\begin{equation}
\chi_{7}=:\int_0^\tau (\tilde{\chi_5}W)(y,s)ds,
\end{equation}which is meaningful due to $\mathcal{M}_\tau(\tilde{\chi_5}W)=\mathcal{M}_\tau({\chi_5}W)-\mathcal{M}_\tau(W)\mathcal{M}_\tau({\chi_5})=0$. Then
the last term in the inequality $(6.17)$ equals to
\begin{equation}\begin{aligned}
-\varepsilon^{2}\partial_t\chi_{7}^\varepsilon\cdot u_\varepsilon
=&-\varepsilon^{2}\partial_t(\chi_{7}^\varepsilon u_\varepsilon)+\varepsilon^{2}\chi_{7}^\varepsilon \partial_t u_\varepsilon\\
=&-\varepsilon^{2}\partial_t(\chi_{7}^\varepsilon u_\varepsilon)+\varepsilon^{2}\chi_{7}^\varepsilon\left(\Delta u_\varepsilon+\varepsilon^{1-k}W^\varepsilon u_\varepsilon+f\right)\\
=&-\varepsilon^{2}\partial_t (\chi_{7}^\varepsilon u_\varepsilon)+\varepsilon^{2}\Delta_x (\chi_{7}^\varepsilon u_\varepsilon)-\varepsilon^{2}\Delta_x (\chi_{7}^\varepsilon u_\varepsilon)\\
&+\varepsilon^{2}\operatorname{div}(\chi_{7}^\varepsilon \nabla u_\varepsilon)-\varepsilon\nabla_y \chi_{7}^\varepsilon \nabla u_\varepsilon+\varepsilon^{2}\chi_{7}^\varepsilon f+\varepsilon^{3-k}\chi_{7}^\varepsilon W^\varepsilon u_\varepsilon.
\end{aligned}\end{equation}
Moreover, due to $\mathcal{M}_\tau(\tilde{\chi_5})=0$, we let
\begin{equation}\chi_{4}=\int_0^\tau \tilde{\chi_5}(y,s)ds.\end{equation}
Then a direct computation shows that
\begin{equation}\begin{aligned}
&{\varepsilon}^{-1}\Delta_y\tilde{\chi_5}^\varepsilon\cdot u_\varepsilon=\varepsilon^{k-1}\partial_t ( \Delta_y\chi_4^\varepsilon\cdot u_\varepsilon)-\varepsilon^{k-1}  \Delta_y\chi_4^\varepsilon \partial_t u_\varepsilon\\
=&\varepsilon^{k-1}\partial_t ( \Delta_y\chi_4^\varepsilon\cdot u_\varepsilon)-\varepsilon^{k-1}  \Delta_y\chi_4^\varepsilon\left(\Delta u_\varepsilon+\varepsilon^{1-k}W^\varepsilon u_\varepsilon+f\right)\\
=&\varepsilon^{k-1}\partial_t ( \Delta_y\chi_4^\varepsilon\cdot u_\varepsilon)-\varepsilon^{k-1}\Delta ( \Delta_y\chi_4^\varepsilon\cdot u_\varepsilon)+\varepsilon^{k-1}\Delta ( \Delta_y\chi_4^\varepsilon \cdot u_\varepsilon)\\
&-\varepsilon^{k-1}  \Delta_y\chi_4^\varepsilon \Delta u_\varepsilon-\Delta_y\chi_4^\varepsilon\cdot W^\varepsilon u_\varepsilon-\varepsilon^{k-1}  \Delta_y\chi_4^\varepsilon\cdot f.
\end{aligned}\end{equation}
Therefore, by setting $\widehat{u_\varepsilon}=:(1-\varepsilon\tilde{\chi_5}^\varepsilon+\varepsilon^
{2}\chi_{7}^\varepsilon-\varepsilon^{k-1}  \Delta_y\chi_4^\varepsilon)u_\varepsilon$, $\widehat{u_\varepsilon}$ satisfies the following parabolic equation:

\begin{equation}\left\{\begin{aligned}
\partial_t \widehat{u_\varepsilon}-\Delta \widehat{u_\varepsilon} &=f_{61}(u_\varepsilon), &\text{in }\Omega\times (0,T),\\
\widehat{u_\varepsilon}&=0, &\text{on }\partial\Omega\times (0,T),\\
\widehat{u_\varepsilon}&=g+\varepsilon\mathcal{M}_\tau({\chi_5})(x/\varepsilon)g, &\text{on }\Omega\times \{t=0\}.
\end{aligned}\right.\end{equation}
with

\begin{equation}\begin{aligned}
f_{61}(u_\varepsilon)=&(1-\varepsilon\tilde{\chi_5}^\varepsilon+\varepsilon^
{2}\chi_{7}^\varepsilon-\varepsilon^{k-1}  \Delta_y\chi_4^\varepsilon)f-\varepsilon^{2}\Delta_x (\chi_{7}^\varepsilon u_\varepsilon)
+\varepsilon^{2}\operatorname{div}(\chi_{7}^\varepsilon \nabla u_\varepsilon)\\
&-\varepsilon\nabla_y \chi_{7}^\varepsilon \nabla u_\varepsilon
+\varepsilon^{3-k}\chi_{7}^\varepsilon W^\varepsilon u_\varepsilon+2\nabla_y \tilde{\chi_5}^\varepsilon \nabla u_\varepsilon
+\varepsilon^{k-1}\Delta ( \Delta_y\chi_4^\varepsilon \cdot u_\varepsilon)\\
&\quad-\varepsilon^{k-1}  \Delta_y\chi_4^\varepsilon \Delta u_\varepsilon
-\Delta_y\chi_4^\varepsilon\cdot W^\varepsilon u_\varepsilon.
\end{aligned}\end{equation}

According to $\tilde{\chi_5},\chi_{7},\nabla_y\tilde{\chi_5},\nabla_y\chi_{7},\Delta_y\chi_4, \nabla_y^3\chi_4\in L^\infty(\mathbb{T}^{d+1}),$ and Lemma 6.1, it is easy to see that
\begin{equation*}\begin{aligned}
\widehat{u_\varepsilon}\in L^2(0,T;H^1_0(\Omega)),\ \partial_t\widehat{u_\varepsilon}\in L^2(0,T;H^{-1}(\Omega)).
\end{aligned}\end{equation*}

To proceed, firstly, since $\{u_\varepsilon\}$ is uniformly bounded in $L^2(0,T;H^1_0(\Omega))$, then up to a subsequence we still relabel by $\varepsilon$, there exists $u_0\in L^2(0,T;H^1_0(\Omega))$ such that $u_\varepsilon\rightharpoonup u_0$ weakly in $L^2(0,T;H^1_0(\Omega))$. Next, by Lemma 4.3 and the estimates above, we know that up to a subsequence we still relabel by $\varepsilon$, there exists $\tilde{u}_0\in L^2(\Omega_T)$ such that
$\widehat{u_\varepsilon}\rightarrow \tilde{u}_0$ strongly in $L^2(\Omega_T)$. Consequently, $u_0$ must equal to $\tilde{u}_0$ since $(-\varepsilon\tilde{\chi_5}^\varepsilon+\varepsilon^
{2}\chi_{7}^\varepsilon-\varepsilon^{k-1}  \Delta_y\chi_4^\varepsilon)u_\varepsilon\rightarrow 0$ strongly in $L^2(\Omega_T)$.

In conclusion,
\begin{equation}u_\varepsilon\rightharpoonup u_0\text{ weakly in }L^2(0,T;H^1_0(\Omega))\text{ and } u_\varepsilon\rightarrow u_0\text{  strongly in }L^2(\Omega_T).\end{equation}

To see the effective equation satisfied by $u_0$, we need the weak limit of $\nabla_y \tilde{\chi_5}^\varepsilon \nabla u_\varepsilon$ in $\mathcal{D}'(\Omega_T)$. A direct computation shows that
\begin{equation*}\begin{aligned}
&2\nabla_y \tilde{\chi_5}^\varepsilon \nabla u_\varepsilon=2\varepsilon^k\partial_t\nabla_y\chi_{4}^\varepsilon\cdot \nabla u_\varepsilon=2\varepsilon^k\partial_t(\nabla_y\chi_{4}^\varepsilon \nabla u_\varepsilon)-2\varepsilon^k\nabla_y\chi_{4}^\varepsilon \partial_t\nabla u_\varepsilon\\
=&2\varepsilon^k\partial_t(\nabla_y\chi_{4}^\varepsilon \nabla u_\varepsilon)-2\varepsilon^k\nabla_y\chi_{4}^\varepsilon\cdot\left(\nabla \Delta u_\varepsilon+\varepsilon^{-k}\nabla_yW^\varepsilon u_\varepsilon+\varepsilon^{1-k}W^\varepsilon \nabla u_\varepsilon+\nabla f\right)\\
=&2\varepsilon^k\partial_t(\nabla_y\chi_{4}^\varepsilon \nabla
 u_\varepsilon)-2\nabla_y\chi_{4}^\varepsilon\nabla_yW^\varepsilon\cdot
 u_\varepsilon-2\varepsilon^k\operatorname{div}(\nabla_y\chi_{4}^\varepsilon\Delta u_\varepsilon)+
2\varepsilon^{k-1}\Delta_y\chi_{4}^\varepsilon\Delta u_\varepsilon\\
&\quad-2\varepsilon\nabla_y\chi_{4}^\varepsilon\cdot W^\varepsilon\nabla
u_\varepsilon-2\varepsilon^k\operatorname{div}(f\nabla_y\chi_{4}^\varepsilon)+
2\varepsilon^{k-1}f\Delta_y\chi_{4}^\varepsilon.
\end{aligned}\end{equation*}

Therefore, we know that
\begin{equation}\begin{aligned}
f_{61}(u_\varepsilon)=&(1-\varepsilon\tilde{\chi_5}^\varepsilon+\varepsilon^
{2}\chi_{7}^\varepsilon+\varepsilon^{k-1}  \Delta_y\chi_4^\varepsilon)f-\varepsilon^{2}\Delta_x (\chi_{7}^\varepsilon u_\varepsilon)
+\varepsilon^{2}\operatorname{div}(\chi_{7}^\varepsilon \nabla u_\varepsilon)\\
&-\varepsilon\nabla_y \chi_{7}^\varepsilon \nabla u_\varepsilon
+\varepsilon^{3-k}\chi_{7}^\varepsilon W^\varepsilon u_\varepsilon
+\varepsilon^{k-1}\Delta ( \Delta_y\chi_4^\varepsilon \cdot u_\varepsilon)
-\varepsilon^{k-1}  \Delta_y\chi_4^\varepsilon \Delta u_\varepsilon\\
&\quad-2\varepsilon\nabla_y\chi_{4}^\varepsilon\cdot W^\varepsilon\nabla
u_\varepsilon+2\varepsilon^k\operatorname{div}(f\nabla_y\chi_{4}^\varepsilon)+
2\varepsilon^k\partial_t(\nabla_y\chi_{4}^\varepsilon \nabla
 u_\varepsilon)\\
 &\quad\quad-2\varepsilon^k\operatorname{div}(\nabla_y\chi_{4}^\varepsilon\Delta u_\varepsilon)
 +2\varepsilon^{k-1}\Delta_y\chi_{4}^\varepsilon\Delta u_\varepsilon-\Delta_y\chi_4^\varepsilon\cdot W^\varepsilon u_\varepsilon\\
 &\quad\quad\quad-2\nabla_y\chi_{4}^\varepsilon\nabla_yW^\varepsilon\cdot
 u_\varepsilon.
\end{aligned}\end{equation}

In view of Lemma 6.2, we have $\varepsilon^{k-1}\left(||\nabla {u}_\varepsilon||_{L^\infty_TL^2_x}+||\Delta u_\varepsilon||_{L^2(\Omega_T)}\right)\leq C\varepsilon^{k-2}\rightarrow 0$ for $2<k\leq 3$. Moreover, integration by parts yields that
\begin{equation}-\mathcal{M}( W\Delta_y\chi_4)=\mathcal{M}(\nabla_y W\nabla_y\chi_4).\end{equation}

 Consequently, with the convergence $(6.24)$ at hand, we can easily deduce from $(6.25)$ that $u_0$ satisfies the following homogenized equation for $2<k<3$:

\begin{equation}\left\{\begin{aligned}
\partial_t u_0-\Delta u_0+\mathcal{M}(\nabla_y\chi_{4}\nabla_yW) u_0&=f, \quad\quad\text{in }\Omega\times (0,T),\\
u_0&=0, \quad\quad\text{on }\partial\Omega\times (0,T),\\
u_0&=g, \quad\quad\text{on }\Omega\times \{t=0\}.
\end{aligned}\right.\end{equation}

Moreover, for $k=3$,
$u_0$ satisfies the following homogenized equation:

\begin{equation}\left\{\begin{aligned}
\partial_t u_0-\Delta u_0+\mathcal{M}(\nabla_y\chi_{4}\nabla_yW) u_0&=f, \quad\quad\text{in }\Omega\times (0,T),\\
u_0&=0, \quad\quad\text{on }\partial\Omega\times (0,T),\\
u_0&=g, \quad\quad\text{on }\Omega\times \{t=0\},
\end{aligned}\right.\end{equation}
since a direct computation shows that, for every $y\in \mathbb{T}$,
\begin{equation}\begin{aligned}
\int_{\mathbb{T}}(\chi_{7} W)(\tau)d\tau=&\int_{\mathbb{T}} W(\tau)\int_0^\tau \tilde{\chi}_5(t)W(t)dtd\tau\\
=&\int_{\mathbb{T}} W(\tau)\int_0^\tau W(t)\chi_5(t)dtd\tau-\mathcal{M}_\tau(\chi_5)\int_{\mathbb{T}} W(\tau)\int_0^\tau W(t)dtd\tau\\
=&\int_{\mathbb{T}} W(\tau)\int_0^\tau W(t)\int_0^s W(s)dsdtd\tau-\mathcal{M}_\tau(\chi_5)\int_{\mathbb{T}} W(\tau)\int_0^\tau W(t)dtd\tau\\
=&\frac 16 \left(\int_{\mathbb{T}}W(\tau)d\tau\right)^3-\frac 12\mathcal{M}_\tau(\chi_5)\left(\int_{\mathbb{T}}W(\tau)d\tau\right)^2=0.
\end{aligned}\end{equation}\qed\\

After determining the effective equation, we are in a position to bound the error.\\

\noindent\textbf{Proof of the second part of Theorem 3.6: the convergence rates.}\\

Defining
\begin{equation}w_\varepsilon=(1-\varepsilon\tilde{\chi_5}^\varepsilon+\varepsilon^
{2}\chi_{7}^\varepsilon-\varepsilon^{k-1}  \Delta_y\chi_4^\varepsilon)v_\varepsilon -v_0,\end{equation} and
going back to $(6.23)$ since we do not know the boundary information for $\nabla v_\varepsilon$, $w_\varepsilon$ satisfies

\begin{equation}\left\{\begin{aligned}
\partial_t w_\varepsilon-\Delta w_\varepsilon+Nw_\varepsilon &=f_{62}(v_\varepsilon),& &\text{in }\Omega\times (0,T),\\
w_\varepsilon&=0,& &\text{on }\partial\Omega\times (0,T),\\
w_\varepsilon&=\varepsilon\mathcal{M}_\tau({\chi_5})(x/\varepsilon)g,&\quad &\text{on }\Omega\times \{t=0\},
\end{aligned}\right.\end{equation}
with
\begin{equation*}\begin{aligned}
f_{62}(v_\varepsilon)=(-\varepsilon\tilde{\chi_5}^\varepsilon+\varepsilon^
{2}\chi_{7}^\varepsilon-\varepsilon^{k-1}  \Delta_y\chi_4^\varepsilon)\tilde{f}-\varepsilon^{2}\Delta_x (\chi_{7}^\varepsilon v_\varepsilon)
+\varepsilon^{2}\operatorname{div}(\chi_{7}^\varepsilon \nabla v_\varepsilon)\\
-\varepsilon\nabla_y \chi_{7}^\varepsilon \nabla v_\varepsilon
+\varepsilon^{3-k}\chi_{7}^\varepsilon W^\varepsilon v_\varepsilon+2\nabla_y \tilde{\chi_5}^\varepsilon \nabla v_\varepsilon
+\varepsilon^{k-1}\Delta ( \Delta_y\chi_4^\varepsilon \cdot u_\varepsilon)\\
\quad-\varepsilon^{k-1}  \Delta_y\chi_4^\varepsilon \Delta v_\varepsilon
-\Delta_y\chi_4^\varepsilon\cdot W^\varepsilon v_\varepsilon+\mathcal{M}(\nabla_y\chi_{4}\nabla_yW) v_0.
\end{aligned}\end{equation*}
To bound $w_\varepsilon$, we rewrite it as $w_\varepsilon=w_{5,\varepsilon}+w_{6,\varepsilon}$ with $w_{5,\varepsilon}$ defined as
\begin{equation}\left\{\begin{aligned}
\partial_t w_{5,\varepsilon}-\Delta w_{5,\varepsilon}+Nw_{5,\varepsilon} &=f_{63}(v_\varepsilon),&\quad &\text{in }\Omega\times (0,T),\\
w_{5,\varepsilon}&=0,& &\text{on }\partial\Omega\times (0,T),\\
w_{5,\varepsilon}&=0,& &\text{on }\Omega\times \{t=0\},
\end{aligned}\right.\end{equation}
where
$$\begin{aligned}f_{63}(v_\varepsilon)=:(-\varepsilon\tilde{\chi_5}^\varepsilon+\varepsilon^
{2}\chi_{7}^\varepsilon-\varepsilon^{k-1}  \Delta_y\chi_4^\varepsilon)\tilde{f}-\varepsilon^{2}\Delta_x (\chi_{7}^\varepsilon v_\varepsilon)
+\varepsilon^{2}\operatorname{div}(\chi_{7}^\varepsilon \nabla v_\varepsilon)\\
-\varepsilon\nabla_y \chi_{7}^\varepsilon \nabla v_\varepsilon
+\varepsilon^{k-1}\Delta ( \Delta_y\chi_4^\varepsilon \cdot u_\varepsilon)-\varepsilon^{k-1}  \Delta_y\chi_4^\varepsilon \Delta v_\varepsilon.\end{aligned}$$

Due to $\varepsilon^{k-1}\Delta ( \Delta_y\chi_4^\varepsilon \cdot u_\varepsilon)=\varepsilon^{k-1}\operatorname{div} ( \Delta_y\chi_4^\varepsilon \cdot \nabla u_\varepsilon)+\varepsilon^{k-2}\operatorname{div} (\nabla_y \Delta_y\chi_4^\varepsilon \cdot u_\varepsilon)$
and $\nabla_y^3 W\in L^\infty(\mathbb{T}^{d+1})$,
it is easy to see that
\begin{equation}\begin{aligned}
||w_{5,\varepsilon}||_{L^\infty_TL^2_x}&+||\nabla w_{5,\varepsilon}||_{L^2(\Omega_T)}+|| w_{5,\varepsilon}||_{L^2(\Omega_T)}\\
&\leq C\varepsilon^{k-2}\left(||f||_{L^2(\Omega_T)}+||g||_{H^1(\Omega)}\right).
\end{aligned}\end{equation}
To bound $w_{6,\varepsilon}$, we first note that $w_{6,\varepsilon}\in W^{2,1}_2(\Omega_T)$ satisfies
\begin{equation}\left\{\begin{aligned}
\partial_t w_{6,\varepsilon}-\Delta w_{6,\varepsilon}+Nw_{6,\varepsilon} &=f_{64}(v_\varepsilon),& &\text{in }\Omega\times (0,T),\\
w_{6,\varepsilon}&=0,& &\text{on }\partial\Omega\times (0,T),\\
w_{6,\varepsilon}&=\varepsilon\mathcal{M}_\tau({\chi_5})(x/\varepsilon)g,&\quad &\text{on }\Omega\times \{t=0\},
\end{aligned}\right.\end{equation}
with
$$f_{64}(v_\varepsilon)=:2\nabla_y \tilde{\chi_5}^\varepsilon \nabla v_\varepsilon
-\Delta_y\chi_4^\varepsilon\cdot W^\varepsilon v_\varepsilon+\mathcal{M}(\nabla_y\chi_{4}\nabla_yW) v_0+\varepsilon^{3-k}\chi_{7}^\varepsilon W^\varepsilon v_\varepsilon.$$
We first note that
\begin{equation}\begin{aligned}
\int_{\Omega_T}2\nabla_y \tilde{\chi_5}^\varepsilon \nabla v_\varepsilon\cdot w_{6,\varepsilon}
=&\int_{\Omega_T}2\nabla_y \tilde{\chi_5}^\varepsilon \nabla v_0\cdot w_{6,\varepsilon}+
\int_{\Omega_T}2\nabla_y \tilde{\chi_5}^\varepsilon \nabla w_{6,\varepsilon}\cdot w_{6,\varepsilon}\\
&+\int_{\Omega_T}2\nabla_y \tilde{\chi_5}^\varepsilon \nabla\left[(-\varepsilon^
{2}\chi_{7}^\varepsilon+\varepsilon^{k-1}  \Delta_y\chi_4^\varepsilon)v_\varepsilon- w_{5,\varepsilon}\right]\cdot w_{6,\varepsilon}\\
&+\int_{\Omega_T}2|\nabla_y \tilde{\chi_5}^\varepsilon|^2 v_\varepsilon w_{6,\varepsilon}+
2\int_{\Omega_T}\varepsilon\tilde{\chi_5}^\varepsilon\nabla_y \tilde{\chi_5}^\varepsilon\nabla v_\varepsilon \cdot w_{6,\varepsilon}\\
=&:\sum_{i=3}^7M_i.
\end{aligned}\end{equation}
Integration by parts yields that
\begin{equation}
|M_3|\leq C\varepsilon ||\Delta v_0||_{L^2(\Omega_T)}||w_{6,\varepsilon}||_{L^2(\Omega_T)}+C\varepsilon ||\nabla v_0||_{L^2(\Omega_T)}||\nabla w_{6,\varepsilon}||_{L^2(\Omega_T)}.
\end{equation}
And
9t is easy to see that
\begin{equation}
|M_4|\leq \frac{1}{16}||\nabla w_{6,\varepsilon}||_{L^2(\Omega_T)}^2+C||w_{6,\varepsilon}||_{L^2(\Omega_T)}^2.
\end{equation}
Moreover, a direct computation after noting $(6.33)$ yields that
\begin{equation}
|M_5|\leq C\varepsilon^{k-2}\left(||f||_{L^2(\Omega_T)}+||g||_{H^1(\Omega)}\right)
||w_{6,\varepsilon}||_{L^2(0,T;H^1(\Omega))}.
\end{equation}
Before we proceed, according to $(6.16)$ and $(6.20)$, a direct computation yields
\begin{equation}\begin{aligned}
\int_{\mathbb{T}^{d+1}}\nabla_y\chi_4 \nabla_y W=&\int_{\mathbb{T}^{d+1}}\nabla_y\chi_4 \nabla_y \partial_\tau\tilde{\chi}_5=-\int_{\mathbb{T}^{d+1}}\nabla_y\partial_\tau\chi_4 \nabla_y\tilde{\chi}_5\\
=&-\int_{\mathbb{T}^{d+1}} \nabla_y\tilde{\chi}_5\nabla_y\tilde{\chi}_5,
\end{aligned}\end{equation}
which, after combining $(6.26)$, implies that
$$2\mathcal{M}(|\nabla_y \tilde{\chi}_5|^2)+\mathcal{M}(\nabla_y\chi_4 \nabla_y W)-\mathcal{M}(W\Delta_y\chi_4 )=0.$$

Therefore, to complete this proof, we need only to solve a cell problem with source term $2|\nabla_y \tilde{\chi}_5|^2-\Delta_y\chi_4\cdot W
+\mathcal{M}(\nabla_y\chi_{4}\nabla_yW)$ in $\mathbb{T}^{d+1}$. More exactly, we introduce $\chi_{6-2}(y,\tau)$ solving the following cell problem:
\begin{equation}\left\{\begin{aligned}
\partial_\tau {\chi_{6-2}} -\Delta_y {\chi_{6-2}}=2|\nabla_y \tilde{\chi}_5|^2-\Delta_y\chi_4\cdot W
+\mathcal{M}(\nabla_y\chi_{5}\nabla_yW)\text{ in }\mathbb{T}^{d+1},\\
 {\chi_{6-2}}\text{ is 1-periodic in }(y,\tau)\text{ with }\mathcal{M}({\chi_{6-2}})=0.
\end{aligned}\right.\end{equation}
Then $${\chi_{6-2}}\in W^{2,1}_p(\mathbb{T}^{d+1}),\text{ for any }1<p<\infty,$$
and by embedding,
$$ {\chi_{6-2}} \in C^{1+\alpha,\frac{1+\alpha}2}(\mathbb{T}^{d+1}), \text{ for any }0<\alpha<1.$$
Then a direct computation shows that
\begin{equation}\begin{aligned}
\int_{\Omega_T}&\left(2|\nabla_y \tilde{\chi_5}^\varepsilon|^2 v_\varepsilon-\Delta_y\chi_4^\varepsilon\cdot W^\varepsilon v_\varepsilon+\mathcal{M}(\nabla_y\chi_{4}\nabla_yW) v_0\right) w_{6,\varepsilon}\\
&=\int_{\Omega_T}\left(2|\nabla_y \tilde{\chi_5}^\varepsilon|^2 -\Delta_y\chi_4^\varepsilon\cdot W^\varepsilon +\mathcal{M}(\nabla_y\chi_{4}\nabla_yW) \right)v_0 w_{6,\varepsilon}\\
&\quad\quad+\int_{\Omega_T}\left(2|\nabla_y \tilde{\chi_5}^\varepsilon|^2 -\Delta_y\chi_4^\varepsilon\cdot W^\varepsilon \right)(v_\varepsilon-v_0) w_{6,\varepsilon}\\
&=\int_{\Omega_T}\left(\varepsilon^k\partial_t\chi_{6-2}^\varepsilon-\varepsilon^2 \Delta_x\chi_{6-2}^\varepsilon\right)v_0 w_{6,\varepsilon}+M_{9}\\
&=:M_8+M_9.
\end{aligned}\end{equation}

\noindent
It is then easy to see that
\begin{equation}\begin{aligned}
|M_{9}|\leq& C||w_{6,\varepsilon}||_{L^2(\Omega_T)}^2+C\varepsilon||v_\varepsilon||_{L^2(\Omega_T)}||w_{6,\varepsilon}||_{L^2(\Omega_T)}
+C||w_{5,\varepsilon}||_{L^2(\Omega_T)}||w_{6,\varepsilon}||_{L^2(\Omega_T)}\\
\leq& C||w_{6,\varepsilon}||_{L^2(\Omega_T)}^2+C\varepsilon^{2k-4}
\left(||f||_{L^2(\Omega_T)}^2+||g||_{H^1(\Omega)}^2\right).
\end{aligned}\end{equation}

\noindent
And a direct computation yields that
\begin{equation}\begin{aligned}
|M_8|\leq& C\varepsilon^k||v_0||_{L^\infty_TL^2_x}||w_{6,\varepsilon}||_{L^\infty_TL^2_x}
+C\varepsilon^k||\partial_tv_0||_{L^2(\Omega_T)}||w_{6,\varepsilon}||_{L^2(\Omega_T)}\\
&+C\varepsilon^k||v_0||_{L^2(\Omega_T)}||\partial_t w_{6,\varepsilon}||_{L^2(\Omega_T)}+C\varepsilon|| v_0||_{L^2(0,T;H^1(\Omega))}|| w_{6,\varepsilon}||_{L^2(0,T;H^1(\Omega))}\\
\leq& \frac1{16}||w_{6,\varepsilon}||_{L^\infty_TL^2_x}^2+\frac1{16}||\nabla w_{6,\varepsilon}||_{L^2(\Omega_T)}^2+|| w_{6,\varepsilon}||_{L^2(\Omega_T)}^2\\
&+C\varepsilon^{2}\left(||f||_{L^2(\Omega_T)}^2+||g||_{H^1(\Omega)}^2\right).
\end{aligned}\end{equation}

Moreover, we need to estimate the following term
\begin{equation}\int_{\Omega_T}\varepsilon^{3-k}\left(\chi_{7}^\varepsilon W^\varepsilon v_\varepsilon-\mathcal{M}(\chi_{7} W)v_0\right)w_{6,\varepsilon},\end{equation}
after noting that $\mathcal{M}_\tau (\chi_{7} W)=0$ for every $y\in\mathbb{T}^d$ (see $(6.29)$),
which can be estimated as the same way of the previous content and would provide $O(\varepsilon)$ convergence rates for us.

Consequently, multiplying the equation $(6.34)$ by $w_{6,\varepsilon}$ after combining $(6.33)$ yields that:
\begin{equation}
||w_\varepsilon||_{L^2(\Omega_T)}+||w_\varepsilon||_{L^\infty_TL^2_x}+||\nabla w_{\varepsilon}||_{L^2(\Omega_T)}\leq C\varepsilon^{k-2}\left(||f||_{L^2(\Omega_T)}+||g||_{H^1(\Omega)}\right).
\end{equation}

\noindent
Thus, we complete the proof of Theorem 3.6.\qed\\

\begin{center}\textbf{\large{Acknowlwdgement}}\end{center}

\normalem\bibliographystyle{plain}{}

\end{document}